\theoremstyle{plain}
\newtheorem{theorem}{Theorem}[section]
\newtheorem{definition}[theorem]{Definition}
\newtheorem{lemma}[theorem]{Lemma}
\newtheorem{proposition}[theorem]{Proposition}
\theoremstyle{remark}
\def\R{{\mathbb R}}
\def\T{{\mathbb T}}
\def\({\left(}
\def\){\right)}
\def\<{\left\langle}
\def\>{\right\rangle}
\def\ge{\geqslant}
\def\1{{\mathbf 1}}
\def\d{{\partial}}
\def\eps{\varepsilon}
\def\g{\gamma}
\def\e{\varepsilon}
\def\R{{\mathbf R}}
\def\T{{\mathbb T}^{d}}
\def\rrho{\sqrt{\rho}}
\def\({\left(}
\def\){\right)}
\def\<{\left\langle}
\def\>{\right\rangle}
\def\ge{\geqslant}
\def\1{{\mathbf 1}}
\newcommand{\dive}{\mathop{\mathrm {div}}}
\newcommand{\Sy}{\mathbb{S}}
\newcommand{\K}{\mathbb{K}}
\newcommand{\be}{\beta\left(\frac{|w_{\e}|^2}{2}\right)}
\newcommand{\bep}{\beta'\left(\frac{|w_{\e}|^2}{2}\right)}
\newcommand{\bedp}{\beta''\left(\frac{|w_{\e}|^2}{2}\right)}
\newcommand{\rn}{\rho_\e}
\newcommand{\rrn}{\sqrt{\rho_\e}}
\newcommand{\un}{u_{\e}}
\newcommand{\wn}{w_{\e}}
\newcommand{\pt}{\partial_t}
\newcommand{\re}{\rho_\e}
\newcommand{\rre}{\sqrt{\rho_\e}}
\newcommand{\ue}{u_{\e}}
\newcommand{\we}{w_{\e}}
\newcommand{\he}{h_{\e}}
\newcommand{\ve}{v_{\e}}
\newcommand{\gie}{g_{\e}}
\newcommand{\phie}{\phi_{\e}}
\newcommand{\rei}{q_{\e}}
\DeclareMathOperator{\diver}{div}
\DeclareMathOperator{\symmD}{D}
\numberwithin{equation}{section}
\date\today
\title[QNS equations]{Global Existence of Finite Energy Weak Solutions of Quantum Navier-Stokes Equations}
\author[P.~Antonelli]{Paolo Antonelli}
\address[P.~Antonelli]{Gran Sasso Science Institute \\ via Crispi 7 \\ 67100 L'Aquila (Italy).}
\email{paolo.antonelli@gssi.it}
\author[S.~Spirito]{Stefano Spirito}
\address[S.~Spirito]{Gran Sasso Science Institute \\ via Crispi 7 \\ 67100 L'Aquila (Italy).}
\email{stefano.spirito@gssi.infn.it}
\keywords{Compressible Fluids, Quantum Navier-Stokes, Vacuum, Existence.}
\begin{document}
\begin{abstract}
In this paper we consider the Quantum Navier-Stokes system both in two and in three space dimensions and prove global existence of finite energy weak solutions for large initial data. In particular, the notion of weak solutions is the standard one. This means that the vacuum region are included in the weak formulations. In particular, no extra term like damping or cold pressure are added to the system in order to define the velocity field in the vacuum region.  The main contribution of this paper is the construction of a regular approximating system consistent with the effective velocity transformation needed to get necessary a priori estimates.  
\end{abstract}

\maketitle

\section{Introduction}
\label{intro}
In this paper we study the Quantum-Navier-Stokes (QNS) system on $(0, T)\times\Omega$,
\begin{equation}\label{eq:qns}
\begin{aligned}
&\pt\rho+\dive(\rho u)=0\\
&\pt(\rho u)+\dive(\rho u\otimes u)+\nabla\rho^{\gamma}-2\nu\dive(\rho Du)-2\kappa^2\rho\nabla\left(\frac{\Delta\rrho}{\rrho}\right)=0,
\end{aligned}
\end{equation}
with initial data
\begin{equation}\label{eq:id}
\begin{aligned}
\rho(0,x)&=\rho^0(x),\\
(\rho u)(0,x)&=\rho^0(x)u^0(x).
\end{aligned}
\end{equation} 
The domain $\Omega$ we consider is the $d$-dimensional torus with $d=2,3$. The unknowns $\rho, u$ represent the mass density and the velocity field of the fluid, respectively, $\nu$ and $\kappa$ are positive constants and they are called the viscosity and the dispersive coefficients.

The above system belongs to a wider class of fluid dynamical evolution equations, called Navier-Stokes-Korteweg systems, which read
\begin{equation}\label{eq:nsk}
\begin{aligned}
&\d_t\rho+\diver(\rho u)=0\\
&\d_t(\rho u)+\diver(\rho u\otimes u)+\nabla p=\diver\mathbb S+\diver\mathbb K,
\end{aligned}
\end{equation}
where $\mathbb S=\mathbb{S}(\nabla u)$ is the viscosity stress tensor
\begin{equation*}
\mathbb S=h(\rho)\symmD u+g(\rho)\diver u\mathbb I,
\end{equation*}
and $\mathbb K=\mathbb{K}(\rho,\nabla\rho)$ the capillarity (dispersive) term, defined through
\begin{equation*}
\mathbb K=\left(\rho\diver(k(\rho)\nabla\rho)-\frac12(\rho k'(\rho)-k(\rho))|\nabla\rho|^2\right)\mathbb I-k(\rho)\nabla\rho\otimes\nabla\rho.
\end{equation*}
The QNS system \eqref{eq:qns} is given by choosing in \eqref{eq:nsk} the capillarity coefficient to be $k(\rho)=\frac{\kappa^2}{\rho}$.

Furthermore, similar systems arise also in the description of quantum fluids. For example the inviscid system, i.e. \eqref{eq:qns} with $\nu=0$, is the well known Quantum Hydrodynamics (QHD) model for superfluids \cite{LL}. Global existence of finite energy weak solutions for the QHD system has been studied in \cite{AM} and \cite{AM2}. Inviscid systems with a general capillarity tensor are also studied extensively, for example in \cite{BGDD} the local well-posedness in high regularity spaces of the Euler-Korteweg system is treated. Recently in \cite{AH} the global well-posedness of the same system for small irrotational data was proved.
The viscous correction term in \eqref{eq:qns} has been also derived in \cite{BM}, by closing the moments for a Wigner equation with a BGK term. For more details about the derivation of the QNS system we refer the reader to \cite{J2}.

The main result we are going to prove in our paper is the existence of global in time finite energy weak solutions for the Cauchy problem \eqref{eq:qns}, \eqref{eq:id}. This is the first result of global existence for finite energy weak solutions to a Navier-Stokes-Korteweg system in several space dimensions. For the one dimensional case, in \cite{J3} the global existence of weak solutions for the QNS system \eqref{eq:qns} is proved. Furthermore, in \cite{GLF} the authors consider a large class of NSK systems in one dimension, for which they prove the existence of global in time finite energy weak solutions. We also mention \cite{CCDZ} where the authors show the existence of global classical solutions around constant states in one space dimension. 
Concerning the multidimensional setting, in \cite{H} the existence of global strong solutions to \eqref{eq:qns} is shown, by choosing a linear pressure and $\kappa=\nu$.

A global existence result for \eqref{eq:qns}, \eqref{eq:id} with finite energy initial data was already obtained by J\"ungel in \cite{J} in the case $\kappa>\nu$ and $\gamma>3$. However, the notion of weak solutions in \cite{J} requires test functions of the type $\rho\phi$, with $\phi$ smooth and compactly supported. This particular choice of such test functions does not consider the nodal region $\{\rho=0\}$ in the weak formulation, where there are the main difficulties in dealing with the convective term and it was introduced in \cite{BDL} to prove a global existence result for a Navier-Stokes-Korteweg system \eqref{eq:nsk} with a specific choice of viscosity and capillarity coefficients. A similar analysis is also done in \cite{Ji} for the case $\kappa<\nu$.

Furthermore, some global existence results by using the classical notion of weak solutions have been shown by augmenting the system \eqref{eq:qns} with some additional terms: for example, \cite{GLV} considers a cold pressure term, whereas in \cite{VY2} damping terms are added. Those augmented systems ensure that the velocity field is well defined also in the vacuum region and it lies in some suitable Lebesgue or Sobolev spaces. From such a priori estimates it is then possible to infer the sufficient compactness properties for the weak solutions, in particular to deal with the convective term in the vacuum region.

When $\kappa=0$ in \eqref{eq:qns}, global existence results for finite energy weak solutions have been recently obtained by \cite{VY1} and \cite{LX}. One of the main tools to treat the convective term is the Mellet-Vasseur inequality \cite{MV}. There the authors prove the compactness of finite energy weak solutions for the Navier-Stokes equations with degenerate viscosity by obtaining a logarithmic improvement to the usual energy estimates, namely they show the quantity
\begin{equation*}
\rho|u|^2\log\left(1+|u|^2\right)
\end{equation*}
is uniformly bounded in $L^\infty_tL^1_x$.

The presence of the dispersive term in \eqref{eq:qns}, however, prevents to directly prove a Mellet-Vasseur type inequality. This was indeed already remarked in \cite{VY1}, where the authors can only prove an approximate estimate by exploiting the extra damping terms and a truncation technique for the mass density.

In \cite{AS} we overcome this difficulty by using an alternative formulation for \eqref{eq:qns} in terms of an effective velocity $w=u+c\nabla\log\rho$. In this way it is possible to tune the viscosity and capillarity coefficients such that the dispersive term vanishes in the new formulation. The Mellet-Vasseur inequality is proved then for the auxiliary system and, by using the a priori bounds obtained from a BD \cite{BD,BDcras} type estimate, we prove the compactness of solutions to \eqref{eq:qns}, \eqref{eq:id}. We refer to \cite{AS} for a more detailed discussion on the stability properties of \eqref{eq:qns}, \eqref{eq:id}. 
We mention \cite{BGZ} and \cite{BDZ}, where a similar effective velocity was used to study fluid dynamical systems with a two-velocity formulation. We also refer to \cite{J3} for a further introduction on models where similar effective velocities are considered.

In this paper we continue our analysis of system \eqref{eq:qns}, \eqref{eq:id} by showing the global existence of finite energy weak solutions. The main difficulty here is to construct a sequence of approximating solutions which satisfy the a priori bounds in \cite{AS}. More precisely, we need to consider an approximating system with the following properties: first of all, it must retain all the a priori estimates, such as the energy and the BD entropy estimates. This further implies that the approximating system must be consistent with the transformation performed in \cite{AS} in terms of the effective velocity. Moreover, we need that the auxiliary system satisfies a Mellet-Vasseur type estimate. Finally, the approximating solutions must be regular.
We notice that standard approximation procedures based on Faedo-Galerkin method can not be used here since the a priori estimates in \cite{AS} heavily depend on the structure of the system.

The approximating system we are going to study is the following one
\begin{equation*}
\begin{aligned}
&\d_t\rho_\eps+\diver(\rho_\eps u_\eps)=0,\\
&\d_t(\rho_\eps u_\eps)+\diver(\rho_\eps u_\eps\otimes u_\eps)+\nabla(\rho_\eps^\gamma+p_\eps(\rho_\eps))+\tilde p_\eps(\rho_\eps)u_\eps=\diver \mathbb S_\eps+\diver \mathbb K_\eps,
\end{aligned}\end{equation*}
where $p_\eps(\rho_\eps)$ is a cold pressure term, $\tilde p_\eps(\rho_\eps)u_\eps$ is a damping term, $\mathbb S_\eps$ and $\mathbb K_\eps$ are the approximating viscosity and capillarity tensors, respectively. As we will see below, the cold pressure term will give us the higher integrability a priori bounds crucial to prove the global regularity of the approximating solutions. However, this introduces some difficulties in the analysis, first of all that prevents to obtain a Mellet-Vasseur type estimate. To overcome this problem we then add the damping term, with a suitable choice of the coefficient $\tilde p_\eps(\rho_\eps)$ such that in the auxiliary system written in terms of the effective velocity the cold pressure cancels. 
In order to show the convergence to zero of the cold pressure and damping terms we need additional a priori estimates. We manage to get further integrability properties for those singular terms by considering a regularized viscous stress tensor, similarly to \cite{LX}.
On the other hand, this requires that also the capillarity tensor has to be regularized accordingly; this is necessary so that the approximating system is consistent with the transformation through the effective velocity, as already remarked above. We will thus consider a regularization for the capillarity tensor such that it can be transformed as a part of the effective viscous tensor.
Moreover, this is the good approximation for the capillarity tensor since this yields the necessary a priori bounds on the mass density.

We conclude this introduction by a comparison with the result in \cite{AS}. The compactness holds for any $\nu, \kappa>0$ positive such that $\kappa<\nu$. In the two dimensional case we prove the existence result for the same range $\kappa<\nu$, while in the three dimensional case we consider $\nu$ and $\mu$ at the same scale, namely $\kappa<\nu<\alpha\kappa$ for some $\alpha>1$. However, it is worth to point out that no smallness assumption on $\nu$ and $\kappa$ are assumed. 
\\
Our paper is structured as follows: in Section \ref{sect:defs} we introduce the notations and definitions, in Section \ref{sec:app} we study the approximating system and we show some useful identities. Then, in Section \ref{sec:apriori} we prove the a priori estimate we need. Finally, in Section \ref{sec:proof} we prove the Theorem \ref{teo:main1} and \ref{teo:main2} and in Section \ref{sec:appex} we prove the global existence of smooth solutions for the approximating system. 

\section{Notations, Definitions and Main Result}\label{sect:defs}
In this section we are going to fix the notations used in the paper, to give the precise definition of weak solution for the system \eqref{eq:qns} and to state our main results.\\

\subsection{Notations}
Given $\Omega\subset\R^3$, the space of compactly supported smooth functions will be $\mathcal{D}((0,T)\times\Omega)$. We will denote with $L^{p}(\Omega)$ the standard Lebesgue spaces and with $\|\cdot\|_p$ their norm. The Sobolev space of $L^{p}$ functions with $k$ distributional derivatives in $L^{p}$ is $W^{k,p}(\Omega)$ and in the case $p=2$ we will write $H^{k}(\Omega)$. The spaces $W^{-k,p}(\Omega)$ and $H^{-k}(\Omega)$ denote the dual spaces of $W^{k,p'}(\Omega)$ and $H^{k}(\Omega)$ where  $p'$ is the H\"older conjugate of $p$. Given a Banach space $X$ we use the the classical Bochner space for time dependent functions with value in $X$, namely $L^{p}(0,T;X)$, $W^{k,p}(0,T;X)$ and $W^{-k,p}(0,T;X)$. Finally, $Du=(\nabla u+(\nabla u)^T)/2$ is the symmetric part of the gradient and $Au=(\nabla u-(\nabla u)^T)/2$ the antisymmetric part. In what follows, $C$ will be any constant depending on the data of the problem but independent on $\e$. Moreover, $\e$ will be always less than a small $\e_f$ depending only on $\g$, $\nu$ and $\kappa$, which will be chosen in the sequel.\\
\subsection{Weak Solutions}
We first recall two alternative ways to write the third order tensor term, which will be very useful in the sequel:
\begin{equation}\label{eq:quantum}
2\rho\nabla\left(\frac{\Delta\rrho}{\rrho}\right)=\dive(\rho\nabla^2\log\rho)=\nabla\Delta\rho-4\dive(\nabla\rrho\otimes\nabla\rrho).
\end{equation}
Then, by using \eqref{eq:quantum}, we can consider the following definition of weak solutions. 
\begin{definition}\label{def:ws}
A pair $(\rho, u)$ with $\rho\geq0$ is said to be a weak solution of the Cauchy problem \eqref{eq:qns}-\eqref{eq:id} if 
\begin{enumerate}
\item Integrability conditions:
\begin{align*}
&\rho\in L^{\infty}(0,T;L^{1}\cap L^{\gamma}(\T)),\\
&\rrho u\in L^{\infty}(0,T;L^{2}(\T)),\\
&\rrho\in L^{\infty}(0,T;H^{1}(\T)).\\
\end{align*}
\item Continuity equation:
\begin{equation*}
\int \rho^0\phi(0)+\iint\rho\phi_t+\rrho\rrho u\nabla\phi=0,
\end{equation*}
for any $\phi\in C_c^{\infty}([0,T);C^{\infty}(\T))$.\\
\item Momentum equation:
\begin{equation*}
\begin{aligned}
&\int \rho^0u^0\psi(0)+\iint\rrho(\rrho u)\psi_t+\rrho u\otimes\rrho u\nabla\psi+\rho^{\gamma}\dive\psi\\
&-2\nu\iint(\rrho u\otimes\nabla\rrho)\nabla\psi-2\nu\iint(\nabla\rrho\otimes\rrho u)\nabla\psi\\
&+\nu\iint\rrho\rrho u\Delta\psi+\nu\iint\rrho\rrho u\nabla\dive\psi\\
&-4\kappa^2\iint(\nabla\rrho\otimes\nabla\rrho)\nabla\psi+2\kappa^2\iint\rrho\nabla\rrho\nabla\dive\psi=0,\\
\end{aligned}
\end{equation*}

for any $\psi\in C_c^{\infty}([0,T);C^{\infty}(\T))$. 
\item Energy Inequality: if

\begin{equation*}
E(t)=\int\frac12\rho|u|^2+\frac{\rho^\gamma}{\gamma-1}+2\kappa^2|\nabla\sqrt{\rho}|^2,
\end{equation*}
then the following energy inequality is satisfied for a.e. $t\in(0,T)$ 
\begin{equation*}
E(t)\leq E(0).
\end{equation*}
\end{enumerate}
\end{definition}
\subsection{Main result}
Let us start by specifying the assumptions on the initial data. Let $\nu>\kappa$ and let $\eta$ be a small fixed positive number. We consider  an initial density $\rho^0$ such that 
\begin{equation}\label{eq:hyidr}
\begin{aligned}
&\rho^0\geq 0\textrm{ in }\T,\\
&\rho^0\in L^{1}\cap L^{\gamma}( \T),\\
&\nabla\rrho^0\in L^{2}\cap L^{2+\eta}( \T).
\end{aligned}
\end{equation}
Concerning the initial velocity $u_0$ we assume that 
\begin{equation}\label{eq:hyidu}
\begin{aligned}
&u_0=0\textrm{ on }\{\rho^0=0\},\\
&\sqrt{\rho^0}u^0\in L^{2}\cap L^{2+\eta}(\T).
\end{aligned}
\end{equation}
The hypothesis of higher integrability on $\nabla\sqrt{\rho^0}$ and $\sqrt{\rho^0}u^0$ imply that 
\begin{equation}\label{eq:mvi}
\rho^0\left(1+\frac{|v^0|^2}{2}\right)\log\left(1+\frac{|v^0|^2}{2}\right)\textrm{ is uniformly bounded in }L^{1}(\T),
\end{equation}
with $v^0=u^0+c\nabla\log\rho^0$ and $c>0$. In order to simplify the presentation we assume also that $\rho^0$ is bounded from above and below, namely there exists $\bar{\rho}^0>0$ such that
\begin{equation}\label{eq:hyidr2}
0<\frac{1}{\bar{\rho}^0}\leq\rho^0\leq \bar{\rho}^0.
\end{equation}
Then, we state our main result in the two dimensional case.  
\begin{theorem}\label{teo:main1}
Let $d=2$. Let $\nu, \kappa$ and $\g$ positive such that $\kappa<\nu$ and $\g>1$. Then for any $0<T<\infty$ there exists  a finite energy weak solutions of the system \eqref{eq:qns} on $(0, T)\times\mathbb{T}^2$, with initial data \eqref{eq:id} satisfying \eqref{eq:hyidr}, \eqref{eq:hyidu} and \eqref{eq:hyidr2}.
\end{theorem}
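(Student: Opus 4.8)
The plan is to realize the weak solution of \eqref{eq:qns}, \eqref{eq:id} as the limit, as $\e\to0$, of smooth solutions of the regularized system
\begin{equation*}
\begin{aligned}
&\pt\re+\dive(\re\ue)=0,\\
&\pt(\re\ue)+\dive(\re\ue\otimes\ue)+\nabla(\re^\g+p_\e(\re))+\tilde p_\e(\re)\ue=\dive\mathbb S_\e+\dive\mathbb K_\e,
\end{aligned}
\end{equation*}
supplemented with mollified initial data $(\re^0,\re^0\ue^0)$ chosen so that the bounds \eqref{eq:hyidr}--\eqref{eq:hyidr2} and \eqref{eq:mvi} hold uniformly in $\e$. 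First I would invoke the global existence result of Section~\ref{sec:appex} to produce, for each fixed $\e>0$, a smooth solution $(\re,\ue)$ with $\re$ bounded above and below; the cold pressure $p_\e(\re)$ provides the higher integrability that makes this possible, and the damping coefficient $\tilde p_\e(\re)$ is tuned so that after the change of unknown $\we=\ue+c\nabla\log\re$ the cold pressure disappears from the $\we$-equation, which is exactly what is needed for the Mellet--Vasseur computation of \cite{AS} to survive at the approximate level. The regularization of $\mathbb K_\e$ is picked so that it can be absorbed into an effective viscous tensor under this transformation, keeping the approximate system compatible with \cite{AS}.

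The second step is to collect the $\e$-uniform estimates from Section~\ref{sec:apriori}. The energy estimate gives $\re\in L^\infty_tL^1\cap L^\g(\T)$, $\rre\ue\in L^\infty_tL^2(\T)$, $\nabla\rre\in L^\infty_tL^2(\T)$, together with dissipation bounds on $\sqrt\nu\,\rre D\ue$ and $\kappa\sqrt\re\,\nabla^2\log\re$ in $L^2_{t,x}$ and the smallness of $p_\e(\re)$, $\tilde p_\e(\re)$ in suitable norms. The BD entropy estimate upgrades the control on $\nabla\rre$, gives $\nabla\re^{\g/2}\in L^2_{t,x}$ and a uniform bound for the effective velocity $\we$. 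Finally, the Mellet--Vasseur estimate for the auxiliary system keeps $\re\big(1+|\we|^2/2\big)\log\big(1+|\we|^2/2\big)$ uniformly bounded in $L^\infty_tL^1(\T)$; it is here that the two-dimensional setting enters, through the interpolation inequalities controlling the error terms. Combining this with the identity $\re|\ue-\we|^2=c^2\re|\nabla\log\re|^2=4c^2|\nabla\rre|^2$ transfers the $L\log L$ control back to $\re|\ue|^2$.

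With these bounds in hand, the third step is compactness and passage to the limit. Up to subsequences one gets $\re\to\rho$ strongly (e.g., in $C_tL^p(\T)$ for appropriate $p$), $\nabla\rre\rightharpoonup\nabla\rrho$ in $L^2$, and, crucially, $\rre\ue\to\rrho u$ strongly in $L^2_{t,x}$ by the equi-integrability of $\re|\ue|^2$; one then defines $u$ on $\{\rho>0\}$ and sets $\rho u=0$ on the vacuum. The convective term $\dive(\rre\ue\otimes\rre\ue)$ then passes to the limit, the dispersive term is handled through the identity \eqref{eq:quantum}, written via $\nabla\Delta\rho$ and $\dive(\nabla\rrho\otimes\nabla\rrho)$, both continuous under the strong $H^1$ convergence of $\rre$, and the viscous terms are treated as in the weak formulation of Definition~\ref{def:ws}, using the products $\rre\ue\otimes\nabla\rre$ and $\rre(\rre\ue)$. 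The cold pressure $\nabla p_\e(\re)$ and the damping $\tilde p_\e(\re)\ue$ tend to zero in $\mathcal{D}'$ thanks to the additional integrability, and the energy inequality follows by weak lower semicontinuity. The main obstacle is precisely the strong convergence of $\rre\ue$ across the vacuum region needed to identify the convective term; this is where the Mellet--Vasseur type estimate, transferred from $\we$ to $\ue$ via the BD bounds, does the essential work, and verifying that this transfer is compatible with the simultaneous vanishing of the approximate cold pressure and damping terms is the delicate point.
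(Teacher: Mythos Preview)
Your overall architecture matches the paper's: regularize, prove energy/BD/Mellet--Vasseur estimates uniformly in $\e$, then pass to the limit using the convergences of Section~\ref{sec:proof}. Two points, however, need correction.

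First, in the approximating system the effective velocity is \emph{not} $\ue+c\nabla\log\re$. Because the viscosity is $\he(\re)=\re+\e\re^{7/8}+\e\re^\gamma$, the effective velocity is $\we=\ue+\mu\nabla\phie(\re)$ with $\re\phie'(\re)=\he'(\re)$ (Lemma~\ref{lem:BDtransf}). This is essential: only with this choice does the dispersive tensor $\K_\e$ become $\diver\Sy_\e(\nabla\phie(\re))$ and the cold pressure cancel; with $\nabla\log\re$ the cancellations would fail at the $\e$-level.

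Second, and more importantly, your proposed transfer of the $L\log L$ control from $\we$ back to $\ue$ via the identity $\re|\ue-\we|^2=4\mu^2|\he'(\re)\nabla\sqrt{\re}|^2$ does \emph{not} yield a uniform bound on $\re(1+|\ue|^2/2)\log(1+|\ue|^2/2)$. The identity only places $\re|\ue-\we|^2$ in $L^\infty_tL^1_x$; to upgrade $\rho|u|^2$ to an $L\log L$ bound you would need something like $\re|\ue-\we|^{2+\delta}\in L^\infty_tL^1_x$, i.e.\ $|\he'(\re)\nabla\sqrt{\re}|^{2+\delta}\in L^\infty_tL^1_x$, which is not available (the estimate \eqref{eq:uf6} on $|\nabla\rho^{1/4}|^4$ is only $L^1_{t,x}$). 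The paper avoids this altogether: it never transfers the Mellet--Vasseur bound to $\ue$. Instead, in Lemma~\ref{lem:smain} it uses \eqref{eq:mvbis} directly on $\we$ to prove the strong $L^2_{t,x}$ convergence of $\sqrt{\re}\we$, and then obtains the strong $L^2_{t,x}$ convergence of $\sqrt{\re}\ue$ from the decomposition
\[
\sqrt{\re}\ue=\sqrt{\re}\we-2\mu\,\he'(\re)\nabla\sqrt{\re},
\]
together with the strong $L^2_{t,x}$ convergence $\he'(\re)\nabla\sqrt{\re}\to\nabla\sqrt\rho$ established in Lemma~\ref{lem:srr}. This is the step you should replace your ``transfer'' argument with.
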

In the three dimensional case we need the a restriction on $\nu, \kappa$ and $\g$. 
\begin{theorem}\label{teo:main2}
Let $d=3$. Let $\nu, \kappa$ and $\g$ positive such that $\kappa^2<\nu^2<\frac{9}{8}\kappa^2$ and $1<\g<3$. Then for any $0<T<\infty$ there exists  a finite energy weak solutions of the system \eqref{eq:qns} on $(0, T)\times\mathbb{T}^3$, with initial data \eqref{eq:id} satisfying \eqref{eq:hyidr}, \eqref{eq:hyidu} and \eqref{eq:hyidr2}.
\end{theorem}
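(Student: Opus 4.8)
The plan is to obtain the weak solution as a limit of smooth solutions $(\rho_\eps,u_\eps)$ of the regularized system announced in the introduction,
\[
\begin{aligned}
&\d_t\rho_\eps+\diver(\rho_\eps u_\eps)=0,\\
&\d_t(\rho_\eps u_\eps)+\diver(\rho_\eps u_\eps\otimes u_\eps)+\nabla(\rho_\eps^\gamma+p_\eps(\rho_\eps))+\tilde p_\eps(\rho_\eps)u_\eps=\diver\mathbb S_\eps+\diver\mathbb K_\eps,
\end{aligned}
\]
whose global existence of smooth solutions is proved in Section~\ref{sec:appex}. The cold pressure $p_\eps(\rho_\eps)$, singular as $\rho_\eps\to0$, forces the approximate density to stay bounded away from vacuum at fixed $\eps$ and yields the higher integrability needed for smoothness; the damping coefficient $\tilde p_\eps(\rho_\eps)$ is chosen so that, under the effective velocity change of unknown $w_\eps=u_\eps+c\nabla\log\rho_\eps$ (with $c=c(\nu,\kappa)$ as in \cite{AS}), the cold pressure term cancels in the $w_\eps$-equation; and $\mathbb S_\eps$, $\mathbb K_\eps$ are regularizations of the viscous and capillarity tensors, chosen consistently so that $\mathbb K_\eps$ is still absorbed into the effective viscous tensor after the transformation — this is exactly the structural compatibility discussed above, and it is also what produces the needed a priori bounds on the density.

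First I would collect the a priori estimates, uniform in $\eps$ (Sections~\ref{sec:app}--\ref{sec:apriori}): the energy estimate in the original variables, giving $\rho_\eps\in L^\infty_t(L^1\cap L^\gamma)$, $\sqrt{\rho_\eps}u_\eps\in L^\infty_tL^2$, $\sqrt{\rho_\eps}\in L^\infty_tH^1$, together with dissipation bounds on $\sqrt{\rho_\eps}\,Du_\eps$, on the Bohm potential and on the cold pressure; the BD entropy estimate, which exploits the Korteweg structure and provides extra space--time regularity of $\sqrt{\rho_\eps}$, of $\rho_\eps^{\gamma/2}$ and of the drift $\nabla\log\rho_\eps$; the Mellet--Vasseur estimate for the auxiliary system in terms of $w_\eps$, giving that $\rho_\eps\bigl(1+\tfrac12|w_\eps|^2\bigr)\log\bigl(1+\tfrac12|w_\eps|^2\bigr)$ is bounded in $L^\infty_tL^1$ — here the vanishing of the third order term in the $w_\eps$-equation is crucial, and so is the damping, which removes the cold pressure from that equation; and finally the additional integrability coming from the regularized stress tensor (in the spirit of \cite{LX}) and from the cold pressure. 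This last family of bounds is what pins down the dimensional restriction in $d=3$: only for $\kappa^2<\nu^2<\tfrac98\kappa^2$ and $1<\gamma<3$ do the relevant Sobolev and interpolation exponents match, allowing one to close the convective term in three dimensions while simultaneously showing that the singular terms converge to zero.

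Then I would pass to the limit $\eps\to0$. From the uniform bounds, $\sqrt{\rho_\eps}\to\sqrt{\rho}$ strongly in $L^2_tH^1_{loc}$ and a.e., $\rho_\eps\to\rho$ in $C_tL^p$, $\rho_\eps^\gamma\to\rho^\gamma$ in $L^1$; the momentum $\rho_\eps u_\eps$ converges in a suitable weak topology, and this is upgraded, using the Mellet--Vasseur bound, to strong convergence of $\sqrt{\rho_\eps}u_\eps$ in $L^2_{t,x}$ — precisely what is needed to pass to the limit in the convective term $\sqrt{\rho_\eps}u_\eps\otimes\sqrt{\rho_\eps}u_\eps$, including on the set $\{\rho=0\}$. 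The regularized tensors $\mathbb S_\eps$, $\mathbb K_\eps$ are rewritten in terms of $\sqrt{\rho_\eps}u_\eps$, $\nabla\sqrt{\rho_\eps}$ and $\sqrt{\rho_\eps}$ as in Definition~\ref{def:ws}, so that all products pass to the limit by the strong/weak convergences above, while the regularizing parameters vanish since they carry explicit powers of $\eps$. The extra integrability bounds show $p_\eps(\rho_\eps)\to0$ and $\tilde p_\eps(\rho_\eps)u_\eps\to0$ in $\mathcal D'$, so that no damping or cold pressure survives in the limit equation. Weak lower semicontinuity gives the energy inequality, and the initial data \eqref{eq:hyidr}, \eqref{eq:hyidu}, \eqref{eq:hyidr2} (together with \eqref{eq:mvi}) are attained by a standard argument.

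The main obstacle is to design $\mathbb S_\eps$ and $\mathbb K_\eps$ so that all three requirements hold at once — (a) the approximating system has smooth solutions, (b) it remains compatible with the effective velocity transformation, so that the BD and Mellet--Vasseur estimates already hold at the $\eps$-level, and (c) the resulting bounds are strong enough to annihilate the cold pressure and the damping in the limit — and then, at the limit stage, to extract enough compactness of $\sqrt{\rho_\eps}u_\eps$ to handle the convective term in the vacuum region without leaving any regularizing term in the limiting equation. In $d=3$, closing the chain of interpolation inequalities required for (c) is exactly what imposes the restrictive relation between $\nu$, $\kappa$ and the admissible range of $\gamma$ in the statement.
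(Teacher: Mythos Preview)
Your overall strategy matches the paper's: construct smooth solutions of the approximating system (Section~\ref{sec:appex}), derive uniform energy, BD entropy and Mellet--Vasseur estimates (Section~\ref{sec:apriori}), then pass to the limit via the convergence lemmas of Section~\ref{sec:proof}. One minor correction: since the viscosity is regularized to $h_\eps(\rho)$, the effective velocity is $w_\eps=u_\eps+\mu\nabla\phi_\eps(\rho_\eps)$ with $\rho\,\phi_\eps'(\rho)=h_\eps'(\rho)$, not $u_\eps+c\nabla\log\rho_\eps$; the two coincide only at $\eps=0$.

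However, you have misidentified where the restriction $\kappa^2<\nu^2<\tfrac98\kappa^2$ enters. It is \emph{not} needed for the limit passage: as the paper explicitly remarks after the statement of Theorem~\ref{teo:main2}, the compactness argument and the convergence to zero of the singular terms work for any $\kappa<\nu$ (the proofs in Section~\ref{sec:proof} are the same for $d=2$ and $d=3$, with only $\gamma<3$ being used, in \eqref{eq:uf5} and Lemma~\ref{lem:srr}). The bound $\nu^2<\tfrac98\kappa^2$ arises solely in proving global smoothness of the approximating solutions at fixed $\eps$ (Theorem~\ref{teo:main4}): one needs $\rho_\eps|w_\eps|^{3+2\delta}\in L^\infty_tL^1_x$ (Lemma~\ref{prop:ldl4}) in order to run a De~Giorgi argument showing $1/C\le\rho_\eps\le C$, and the proof of that higher moment bound requires the coefficient $\mu-\nu+\mu/(2\delta)$ (with $\mu=\nu-\sqrt{\nu^2-\kappa^2}$) to be positive for some $\delta\ge\tfrac12$, which forces $\tfrac32\mu>\nu$, i.e.\ $\nu^2<\tfrac98\kappa^2$. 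So the dimensional obstruction sits in your step~(a), not in~(c).
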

Let us briefly comment on the extra assumption we have in Theorem \ref{teo:main2}. This assumption is not required in the passage to the limit from the approximating solutions $(\re, \ue)$ to solutions of \eqref{eq:qns} but only in the proof of global existence of smooth solutions of the approximating system, see Theorem \ref{teo:main4}.  As it will be clear from our proof (see Proposition \ref{prop:ldl4}), we need the viscosity and capillarity constants to be comparable in order to prove regularity of solutions of the approximating system. The constant $9/8$ is not optimal there and can be improved. Furthermore we stress that we do not need any smallness assumptions on $\nu, \kappa$.
Recently, after the submission of our paper, it was shown in \cite{LLV} that the above technical restriction can indeed be removed. This was achieved by following some different arguments than the ones used in our paper.

\section{The Approximating System}\label{sec:app}
In this Section we first introduce the approximating system we are going to study and we then show how that can be transformed into an equivalent system in terms of the effective velocity, analogously to what was done in \cite{AS}.\\
\subsection{Approximating System}
The system in $(0,T)\times\T$  we consider is 
\begin{equation}\label{eq:aqns}
\begin{aligned}
&\partial_t\re+\dive(\re \ue)=0,\\
&\partial_t(\re \ue)+\dive(\re \ue\otimes \ue)-2\nu\dive\Sy_\eps+\nabla(\re^\g+p_{\e}(\re))+\tilde{p}_{\e}(\re)\ue=\kappa^2\dive\K_\eps.
\end{aligned}
\end{equation}
The system \eqref{eq:aqns} is coupled with initial data on $\{t=0\}\times\T$:
\begin{equation}\label{eq:aid}
\begin{aligned}
&\re(0,x)=\re^0(x),\\
&\re\ue(0,x)=\re^{0}(x)\ue^{0}(x).
\end{aligned}
\end{equation}
Let us describe in what follows the various terms appearing in \eqref{eq:aqns}.

 The viscosity coefficient $\he(\re)$ is defined as follows 
\begin{equation}\label{eq:hrho}
\he(\re)=\re+\e\re^{\frac{7}{8}}+\e\re^{\g}
\end{equation}
and we define $\gie(\re)$ to be
\begin{equation}\label{eq:grho}
\gie(\re)=\re \he'(\re)-\he(\re).
\end{equation}
Then the stress tensors $\Sy_\eps=\Sy_{\eps}(\nabla\ue)$ is: 
\begin{equation}\label{eq:avt}
\Sy_\eps(\nabla\ue)=\he(\re)D\ue+\gie(\re)\dive \ue\mathbb{I}.
\end{equation}
The following inequalities follow from the definitions of $\he(\re)$ and $\gie(\re)$
\begin{equation}\label{eq:stin1}
\begin{aligned}
&\he(\re)\geq0, &|\gie(\re)|\leq \max(\frac18, (\g-1))\he(\re),\\
&\he'(\re)\re\leq \g \he(\re), &|\he''(\re)|\re\leq (\g-1) \he'(\re).\\
\end{aligned}
\end{equation}
In particular it follows from \eqref{eq:hrho} that 
\begin{equation}\label{eq:stin2}
\begin{aligned}
&\he(\re)|D\ue|^2+\gie(\re)|\dive \ue|^2>\frac{5}{8}\he(\re)|D\ue|^2.
\end{aligned}
\end{equation}

The approximating dispersive term $\K_\e=\K_{\e}(\re,\nabla\re)$ is defined as 
\begin{equation*}
\dive(\K_{\e}(\re,\nabla\re))=2\re\nabla\left(\frac{\he'(\re)\dive(\he'(\re)\nabla\rre)}{\rre}\right).
\end{equation*}
We notice that, for $\eps=0$, we recover the quantum term in \eqref{eq:quantum}. Next Lemma clarifies how this approximation is consistent with the approximating viscous tensor in \eqref{eq:avt}.

\begin{lemma}\label{lem:formK}
The following formulae hold for the capillarity term $\diver \K_{\e}$: 
\begin{equation*}
\begin{aligned}
&2\re\nabla\left(\frac{\he'(\re)\dive(\he'(\re)\nabla\rre)}{\rre}\right)=\dive(\he(\re)\nabla^{2}\phie(\re))+\nabla(\gie(\re)\Delta\phie(\re))\\
&=\nabla\left(\he'(\re)\Delta \he(\re)\right)-4\diver((\he'(\re)\nabla\rre)\otimes (\he'(\re)\nabla\rre))\\
\end{aligned}
\end{equation*}
where $\phie(\re)$ is such that $\re\phi'(\re)=\he'(\re)$. 
\end{lemma}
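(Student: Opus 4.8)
The plan is to verify the two identities by reducing them to the known formulae for the exact quantum term \eqref{eq:quantum}, via the substitution dictated by the auxiliary function $\phie$. First I would observe that the definition $\re\phi'_\e(\re)=\he'(\re)$ together with $\gie(\re)=\re\he'(\re)-\he(\re)$ gives the chain-rule relations $\nabla\phie(\re)=\frac{\he'(\re)}{\re}\nabla\re$ and, since $\nabla\rre=\frac{1}{2\rre}\nabla\re$, the crucial identity $\he'(\re)\nabla\rre=\rre\,\nabla\phie(\re)\cdot\tfrac12\cdot\frac{1}{\text{(something)}}$—more precisely I would compute directly that $\he'(\re)\dive(\he'(\re)\nabla\rre)$ can be rewritten so that $\frac{\he'(\re)\dive(\he'(\re)\nabla\rre)}{\rre}$ equals the ``Bohm-type'' expression $\frac{\Delta\Phi}{\Phi}$ for an appropriate profile, or alternatively I would just treat $\sigma_\e:=\he(\re)$ and expand. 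The cleanest route is: set $s=\rre$ so $\he'(\re)\nabla\rre=\nabla F(s)$ for a suitable one-variable $F$ with $F'(s)=2s\,\he'(s^2)$, i.e. $F(\re)=$ an antiderivative; then $\he'(\re)\dive(\he'(\re)\nabla\rre)=\he'(\re)\Delta F(\re)$, and one checks $F=\he$ up to the factor coming from $F'(s)=2s\he'(s^2)$. In fact, since $\frac{d}{ds}\he(s^2)=2s\,\he'(s^2)=F'(s)$, we get exactly $F(\re)=\he(\re)$, so $\he'(\re)\dive(\he'(\re)\nabla\rre)=\he'(\re)\Delta\he(\re)$ modulo the lower-order curvature terms — this is the anchor for the second equality.

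For the second (divergence-form, ``$\nabla\Delta$ minus div of tensor'') identity, I would mimic the second equality in \eqref{eq:quantum}: write $2\re\nabla\!\left(\frac{\he'(\re)\dive(\he'(\re)\nabla\rre)}{\rre}\right)$ and push the outer $\re\nabla(\cdot)$ through using the product rule, exactly as in the passage from $2\rho\nabla(\Delta\rrho/\rrho)$ to $\nabla\Delta\rho-4\dive(\nabla\rrho\otimes\nabla\rrho)$, but now with every $\rrho$ replaced by $\he'(\re)\nabla\rre$ in the quadratic term and $\rho$ replaced by $\he(\re)$ in the $\nabla\Delta$ term. Concretely, using the identity $2a\nabla\!\left(\frac{\dive b}{a}\cdot\frac{a}{\cdot}\right)$-type manipulation, one reduces to the algebraic identity $\nabla\Delta g = \dive(\nabla^2 g)$ and the Leibniz expansion $\dive(\nabla f\otimes\nabla f) = \nabla^2 f\cdot\nabla f + \Delta f\,\nabla f$, applied with $f=\he'(\re)\nabla\rre$ suitably interpreted. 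I expect the bookkeeping to be the standard one already used to derive \eqref{eq:quantum}; the role of $\gie$ is that the term $\nabla(\gie(\re)\Delta\phie(\re))$ collects precisely the ``trace'' contributions $\dive u\,\mathbb I$-style, matching the decomposition $\he D + \gie\dive\,\mathbb I$ in \eqref{eq:avt}, which is why the capillarity regularization is ``consistent'' with the viscous one.

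The main obstacle, I anticipate, is the first equality — identifying $2\re\nabla\!\left(\frac{\he'(\re)\dive(\he'(\re)\nabla\rre)}{\rre}\right)$ with $\dive(\he(\re)\nabla^2\phie(\re))+\nabla(\gie(\re)\Delta\phie(\re))$ — because here one must correctly distribute derivatives among the three distinct nonlinearities $\he(\re)$, $\he'(\re)$, and $\phie(\re)$, and use the structural relations $\re\phi'_\e=\he'$ and $\gie=\re\he'-\he$ at exactly the right moments so that all third-order terms involving $\nabla^3\re$ organize into $\dive(\he\,\nabla^2\phie)$ and the leftover second-derivative-times-first-derivative terms organize into $\nabla(\gie\Delta\phie)$. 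I would carry this out by expanding both sides in terms of $\re$ and its derivatives (writing everything through $\nabla\re$, $\nabla^2\re$, $\nabla^3\re$, and the scalar functions $\he,\he',\he'',\phi'_\e,\phi''_\e$), and then checking coefficient-by-coefficient that the two sides agree, repeatedly substituting $\phi'_\e=\he'/\re$ and $\phi''_\e=(\he''\re-\he')/\re^2$. Once the first equality is established, the second follows by the same elementary commutation used in \eqref{eq:quantum}, namely $\dive(\he\nabla^2\phie)=\nabla(\he'_\e?\dots)$—more carefully, by distributing $\dive(\he\nabla^2\phie)=\nabla(\he\Delta\phie)+\dots$ wait, rather: $\dive(\he\nabla^2\phie)+\nabla(\gie\Delta\phie)=\nabla((\he+\gie)\Delta\phie)+\dive(\he\nabla^2\phie-\Delta\phie\,\he\,\mathbb I)$ — and recognizing $\he+\gie=\re\he'$, so the first term is $\nabla(\re\he'\Delta\phie)=\nabla(\he'\Delta\he)$ by the anchor identity above, while the remaining term rearranges into $-4\dive((\he'\nabla\rre)\otimes(\he'\nabla\rre))$ after using $\he'\Delta\he=\he'\dive(\he'\nabla\re)$-type expansions; this last rearrangement is again purely the content of \eqref{eq:quantum} transported along $\phie$.
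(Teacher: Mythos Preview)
Your overall strategy --- exploit the structural identities $\re\,\phie'(\re)=\he'(\re)$, $\gie(\re)=\re\he'(\re)-\he(\re)$ and mimic the derivation of \eqref{eq:quantum} --- is correct and is essentially what the paper does. However, two of your explicit intermediate claims are wrong and would derail the computation as written. First, your ``anchor'' is off by a factor of $2\rre$: from $F(s)=\he(s^2)$ one gets $\nabla F(\rre)=2\rre\,\he'(\re)\nabla\rre=\nabla\he(\re)$, \emph{not} $\he'(\re)\nabla\rre$; the usable identity is
\[
2\rre\,\he'(\re)\dive\!\big(\he'(\re)\nabla\rre\big)=\he'(\re)\Delta\he(\re)-2\,|\he'(\re)\nabla\rre|^2.
\]
Second, and consequently, your claim $\nabla(\re\he'(\re)\Delta\phie(\re))=\nabla(\he'(\re)\Delta\he(\re))$ is false: since $\Delta\he(\re)=\dive(\re\nabla\phie(\re))=\re\Delta\phie(\re)+\nabla\re\cdot\nabla\phie(\re)$, one has $\he'(\re)\Delta\he(\re)=\re\he'(\re)\Delta\phie(\re)+\nabla\he(\re)\cdot\nabla\phie(\re)$, so a cross term $\nabla(\nabla\he\cdot\nabla\phie)$ is missing on your right-hand side.

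The paper avoids the coefficient-by-coefficient expansion in $\nabla^3\re$ by reversing your order. It first proves the \emph{second} equality directly, writing $2\re\nabla(X/\rre)=\nabla(2\rre X)-4X\nabla\rre$ with $X=\he'(\re)\dive(\he'(\re)\nabla\rre)$ and inserting the corrected anchor above; the $\nabla|\he'(\re)\nabla\rre|^2$-type contributions cancel and one is left with $\nabla(\he'(\re)\Delta\he(\re))-4\dive\big((\he'(\re)\nabla\rre)\otimes(\he'(\re)\nabla\rre)\big)$. It then passes to the first form by rewriting $4(\he'(\re)\nabla\rre)\otimes(\he'(\re)\nabla\rre)=\nabla\he(\re)\otimes\nabla\phie(\re)$ and using the tensor identity
\[
\dive\big(\nabla\he(\re)\otimes\nabla\phie(\re)\big)=\nabla\dive\big(\he(\re)\nabla\phie(\re)\big)-\dive\big(\he(\re)\nabla^2\phie(\re)\big);
\]
together with $\he'(\re)\dive(\re\nabla\phie(\re))=\he'(\re)\re\Delta\phie(\re)+\nabla\he(\re)\cdot\nabla\phie(\re)$, the cross terms now cancel cleanly and $\gie(\re)=\re\he'(\re)-\he(\re)$ gives the result in two lines.
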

\begin{proof}

By direct computations we get 
\begin{equation*}
\begin{aligned}
\dive(\K_\eps)&=\nabla(\he'(\re)\rre\dive(\he'(\re)\nabla\re/\rre))-4\he'(\re)\nabla\rre\dive(\he'(\re)\nabla\rre)\\
&=\nabla(\he'(\re)\Delta \he(\re))-2\nabla\left(|h'_\eps(\rho_\eps)\nabla\sqrt{\rho_\eps}|^2\right)\\
&-4\dive(\he'(\re)\nabla\rre\otimes \he'(\re)\nabla\rre)+4\nabla\left(h'_\eps(\rho_\eps)\nabla\sqrt{\rho_\eps}\right)\cdot(h'_\eps(\rho_\eps)\nabla\sqrt{\rho_\eps})\\
&=\nabla(\he'(\re)\Delta \he(\re))-4\dive(\he'(\re)\nabla\rre\otimes \he'(\re)\nabla\rre).
\end{aligned}
\end{equation*}        
To prove the remaining identity, we use the fact that $\re\phi'(\re)=\he'(\re)$ we have
\begin{equation*}
\begin{aligned}
&\nabla(\he'(\re)\Delta \he(\re))-4\diver(\he'(\re)\nabla\rre\otimes \he'(\re)\nabla\rre))=\\
&\nabla(\he'(\re)\dive(\re\nabla\phie(\re)))-\diver(\nabla \he(\re)\otimes\nabla\phie(\re))=\\
&\nabla(h'_\eps(\rho_\eps)\rho_\eps\Delta\phi_\eps(\rho_\eps))+\nabla(\nabla h_\eps(\rho_\eps)\cdot\nabla\phi_\eps(\rho_\eps))\\&-\nabla^2(h_\eps(\rho_\eps)\nabla\phi_\eps(\rho_\eps))+\diver(h_\eps(\rho_\eps)\nabla^2\phi_\eps(\rho_\eps))=\\
&\nabla(h'_\eps(\rho_\eps)\rho_\eps\Delta\phi_\eps(\rho_\eps))-\nabla(h_\eps(\rho_\eps)\Delta\phi_\eps(\rho_\eps))+\diver(h_\eps(\rho_\eps)\nabla^2\phi_\eps(\rho_\eps))=\\
&\nabla(g_\eps(\rho_\eps)\Delta\phi_\eps(\rho_\eps))+\diver(h_\eps(\rho_\eps)\nabla^2\phi_\eps(\rho_\eps)).
\end{aligned}
\end{equation*}
\end{proof}

The previous Lemma explains how the regularization of the dispersive tensor is consistent with \eqref{eq:hrho} and the transformation through the effective velocity. Indeed, since the viscous tensor $\mathbb{S}_{\e}(\nabla \ue)=\he(\re)D\ue+\gie(\re)\diver \ue$, the effective velocity is given by
$\ve=\ue+c\nabla\phie(\re)$, where as above $\phie(\re)$ is defined through $\he'(\re)=\re\phie'(\re)$. Then, from the identities in Lemma \ref{lem:formK}, it is straightforward to see that $\diver \mathbb K_\eps(\re,\nabla\re)=\diver \mathbb S_\eps(\nabla^2\phie(\re))$, so that in the effective system this can be incorporated in the effective viscous tensor. A similar Lemma is also proven in \cite{BCNV}, where the authors use an equivalent formula for the capillarity term in order to construct a numerical scheme for the Euler-Korteweg system with entropy stability property under a hyperbolic CFL condition.

The coefficient $\tilde{p}_{\e}(\re)$ in the damping term is defined by
\begin{equation*}
\tilde{p}_{\e}(\re)=\lambda(\e)\left(\re^{\frac{1}{\e^2}}+\re^{-\frac{1}{\e^2}}\right)
\end{equation*}
where $\lambda(\eps)=e^{-\frac{1}{\eps^4}}$. The cold pressure $p_\e(\re)$ is defined such that 
\begin{equation*}
p'_{\e}(\re)=\mu\tilde{p}_{\e}(\re)\frac{\he'(\re)}{\re},
\end{equation*}
where 
\begin{equation}\label{eq:maincost}
\mu=\nu-\sqrt{\nu^2-\kappa^2}.
\end{equation}
In particular, by using the definition of $\he(\re)$ and $\tilde{p}_{\e}(\re)$ by direct computations we get 
the following expression for $p_{\e}(\re)$ 
\begin{equation}\label{eq:defp}
\begin{aligned}
p_{\e}(\re)&=\mu\e^{2}\lambda(\e)\re^{\frac{1}{\e^2}}+\frac{\e^3\mu7\lambda(\e)}{8-\e^2}\re^{\frac{1}{\e^2}-\frac{1}{8}}\\
                  &+\frac{\e^3 \mu\lambda(\e)\gamma}{1+\e^2(\gamma-1)}\re^{\frac{1}{\e^2}+\gamma-1}-\mu\lambda(\e)\e^2\re^{-\frac{1}{\e^2}}\\
                  &-\frac{\e^3\mu7\lambda(\e)}{\e^2+8}\re^{-\frac{1}{\e^2}-\frac{1}{8}}-\frac{\e^3\mu\gamma\lambda(\e)}{1-\e^{2}(\gamma-1)}\re^{-\frac{1}{\e^2}+\gamma-1}\\
                  &=\sum_{i=1}^{6}p_{\e}^{i}(\re).
\end{aligned}
\end{equation}
Let $f_{\e}(\re)$ such that 
\begin{equation*}
p_{\e}(\re)=\re f'_{\e}(\re)-f_{\e}(\re).
\end{equation*}
Then, again by direct calculation we have that 
\begin{equation}\label{eq:deff}
\begin{aligned}
f_{\e}(\re)&=\frac{\mu\e^{4}\lambda(\e)}{1-\e^2}\re^{\frac{1}{\e^2}}+\frac{\e^5\mu7\lambda(\e)}{(8-\e^2)(8-9\e^2)}\re^{\frac{1}{\e^2}-\frac{1}{8}}\\
                 &+\frac{\e^5\mu\lambda(\e)\gamma}{(1+\e^2(\g-1))(1+\e^2(\g-2))}\re^{\frac{1}{\e^2}+\gamma-1}
                 +\frac{\e^2 \mu\lambda(\e)}{\e^2+1}\re^{-\frac{1}{\e^2}}\\
                 &+\frac{\e^5\mu7\lambda(\e)8}{(8+\eps^2)(9+8\e^2)}\re^{-\frac{1}{\e^2}-\frac{1}{8}}
                 +\frac{\e^5\mu\g\lambda(\e)}{(1-\e^2(\g-1))(1-\e^2(\g-2))}\re^{-\frac{1}{\e^2}+\gamma-1}
                 &=\sum_{i=1}^{6}f_{\e}^{i}(\re).
\end{aligned}
\end{equation}
It is straightforward to check that there exists $\e_{f}=\e_f(\g)>0$ small enough such that both $f^{i}_{\e}(\re)$ and $(f^{i}_{\e}(\re))''$ are positive for any $i=1,...,6$, $\eps<\eps_f$.\\

Finally we construct the initial data \eqref{eq:aid}. Given $(\rho^0, u^{0})$ satisfying \eqref{eq:hyidr}, \eqref{eq:hyidu} and \eqref{eq:hyidr2} it is easy to construct a sequence of smooth functions $(\re^0, \ue^{0})$ such that 
\begin{equation}\label{eq:hyaid}
\begin{aligned}
&\frac{1}{\bar{\rho}^0}\leq\re^0\leq \bar{\rho}^0,\\
&\re^0\rightarrow \rho^0\textrm{ strongly in }L^{1}(\T),\\
&\{\re^0\}_{\e}\textrm{ is uniformly bounded in }L^{1}\cap L^{\gamma}(\T),\\
&\{\he'(\re)\nabla\sqrt{\re^0}\}_\e\textrm{ is uniformly bounded in }L^{2}\cap L^{2+\eta}(\T),\\
& \he'(\re)^0\nabla\sqrt{\re^0}\rightarrow\nabla\sqrt{\rho^0}\textrm{ strongly in }L^{2}(\T),\\
&\{\sqrt{\re^0}\ue^0\}\textrm{ is uniformly bounded in }L^{2}\cap L^{2+\eta}(\T),\\
&\re^0 \ue^0\rightarrow \re^0 \ue^0\textrm{ in }L^{1}(\T),\\ 
&f_{\e}(\re^0)\rightarrow 0\textrm{ strongly in }L^{1}(\T).
\end{aligned}
\end{equation}
In particular the hypothesis on the boundedness of $\rho^0$ makes it easy to prove that $h'(\re)$ and $f_{\e}(\re)$ are uniformly bounded. Moreover, the higher integrability on $h'(\re^{0})\nabla\sqrt{\re^0}$ and $\sqrt{\re^0}\ue^0$ implies that 
\begin{equation}\label{eq:mvi}
\re^0\left(1+\frac{|v_\eps^0|^2}{2}\right)\log\left(1+\frac{|v_\eps^0|^2}{2}\right)\textrm{ is uniformly bounded in }L^{1}(\T),
\end{equation}
with $v_\eps^0=u^0+c\nabla\phi_\eps(\re^0)$ and $c>0$.\\
\subsection{The effective velocity formulation}
We now consider the effective velocity $v_\eps=u_\eps+c\nabla\phi_\eps(\rho_\eps)$. The next Lemma shows that the system \eqref{eq:aqns} can be equivalently written in terms of $(\rho_\eps, v_\eps)$. Furthermore, with a suitable choice of the constant $c$, both the dispersive and the cold pressure terms will vanish.
\begin{lemma}\label{lem:BDtransf}
Let $(\re,\ue)$ be a smooth solution of the system \eqref{eq:aqns}. Then, $(\re,\ve)$, with $\ve=\ue+c\nabla\phie(\re)$ and $c>0$ satisfies the following system,
\begin{equation}\label{eq:avqns}
\begin{aligned}
&\d_t\re+\diver(\re \ve)=c\Delta \he(\re)\\
&\d_t(\re \ve)+\diver(\re \ve\otimes \ve)+\nabla \re^{\gamma}+\tilde\lambda\nabla p_\eps(\rho_\eps)-c\Delta(\he(\re) \ve)+\tilde{p}(\re)\ve\\
&-2(\nu-c)\dive(\he(\re)D\ve)-(2\nu-c)\nabla(\gie(\re)\dive \ve)-\tilde\kappa^2\diver\K_\eps=0,
\end{aligned}
\end{equation}
where $\mu>0$ is defined in \eqref{eq:maincost}, $\tilde\kappa^2=\kappa^2-2\nu c+c^2$, $\tilde\lambda=(\mu-c)/\mu$.
\end{lemma}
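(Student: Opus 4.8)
The plan is to verify Lemma \ref{lem:BDtransf} by a direct computation, substituting $\ue=\ve-c\nabla\phie(\re)$ into the system \eqref{eq:aqns} and reorganizing terms. Since the result is an identity for smooth solutions, there is no analytic subtlety; the work is purely algebraic bookkeeping, and the main point is to check that the coefficients $\tilde\kappa^2=\kappa^2-2\nu c+c^2$, $\tilde\lambda=(\mu-c)/\mu$ come out correctly, and in particular that with the distinguished choice $c=\mu=\nu-\sqrt{\nu^2-\kappa^2}$ one gets $\tilde\kappa^2=0$ and $\tilde\lambda=0$, so that both the dispersive term and the cold pressure term disappear.

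First I would handle the continuity equation: plugging $\ue=\ve-c\nabla\phie(\re)$ into $\pt\re+\diver(\re\ue)=0$ gives $\pt\re+\diver(\re\ve)=c\diver(\re\nabla\phie(\re))=c\diver(\he'(\re)\nabla\re)=c\Delta\he(\re)$, using $\re\phie'(\re)=\he'(\re)$ from Lemma \ref{lem:formK}. This is the modified mass equation in \eqref{eq:avqns}. Next, for the momentum equation, I would compute $\pt(\re\ve)=\pt(\re\ue)+c\,\pt(\re\nabla\phie(\re))$ and expand $\pt(\re\nabla\phie(\re))=\pt(\he'(\re)\nabla\re)$ using the continuity equation to replace $\pt\re$; similarly expand $\diver(\re\ve\otimes\ve)$ in terms of $\diver(\re\ue\otimes\ue)$ plus cross terms involving $c\nabla\phie(\re)$. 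The key structural fact, already recorded after Lemma \ref{lem:formK}, is that $\diver\mathbb K_\eps(\re,\nabla\re)=\diver\mathbb S_\eps(\nabla\phie(\re))=2\diver(\he(\re)D\nabla\phie(\re))+\nabla(\gie(\re)\Delta\phie(\re))$, so the capillarity term combines with the viscous term evaluated on the correction $c\nabla\phie(\re)$; this is exactly what produces the shifted viscosity coefficients $2(\nu-c)$ and $(2\nu-c)$ together with the residual $\tilde\kappa^2=\kappa^2-2\nu c+c^2$ in front of $\diver\mathbb K_\eps$ (note $\kappa^2-2\nu c+c^2=(c-\mu)(c-(2\nu-\mu))$ vanishes at $c=\mu$). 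For the pressure/damping bookkeeping, I would use the defining relation $p'_\eps(\re)=\mu\tilde p_\eps(\re)\he'(\re)/\re$, which gives $\nabla p_\eps(\re)=\mu\tilde p_\eps(\re)\nabla\phie(\re)$; thus the damping term $\tilde p_\eps(\re)\ue=\tilde p_\eps(\re)\ve-c\tilde p_\eps(\re)\nabla\phie(\re)=\tilde p_\eps(\re)\ve-(c/\mu)\nabla p_\eps(\re)$, and combined with the original $\nabla p_\eps(\re)$ this leaves $(1-c/\mu)\nabla p_\eps(\re)=\tilde\lambda\nabla p_\eps(\re)$ with $\tilde\lambda=(\mu-c)/\mu$. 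Assembling all the pieces and matching with the transport term $-c\Delta(\he(\re)\ve)$ (which absorbs the time-derivative-of-correction and convective cross terms via the mass equation, exactly as in the BD entropy computation) yields \eqref{eq:avqns}.

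The main obstacle — really the only one — is keeping track of the numerous cross terms when expanding $\diver(\re\ve\otimes\ve)$ and $\pt(\re\ve)$ and verifying that everything not explicitly appearing in \eqref{eq:avqns} cancels; this is the standard BD-type manipulation and I would organize it by first rewriting the correction $c\nabla\phie(\re)$-contributions to the convective and time-derivative terms as a single divergence, as in \cite{BD, AS}, so that they merge with $-c\Delta(\he(\re)\ve)$. The treatment of the extra damping and cold pressure terms is the genuinely new ingredient compared to \cite{AS}, but because $p_\eps$ was \emph{designed} through the relation $p'_\eps(\re)=\mu\tilde p_\eps(\re)\he'(\re)/\re$ precisely so that the cold pressure gradient is proportional to $\tilde p_\eps(\re)\nabla\phie(\re)$, this part reduces to the one-line computation above. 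I would conclude by remarking that the distinguished choice $c=\mu$ makes $\tilde\kappa=0$ and $\tilde\lambda=0$, so \eqref{eq:avqns} becomes a parabolic system for $\ve$ with no dispersive term and no cold pressure, which is the formulation on which the a priori estimates of Section \ref{sec:apriori} are based.
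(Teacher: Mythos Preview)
Your proposal is correct and follows essentially the same route as the paper's proof: derive the modified continuity equation from $\re\phie'(\re)=\he'(\re)$, then expand $\pt(\re\ve)+\diver(\re\ve\otimes\ve)$ by adding to the original momentum equation the contributions $c\,\pt(\re\nabla\phie(\re))$ and the convective cross terms (the paper records these as identities (3.9)--(3.11)), use Lemma~\ref{lem:formK} to convert the capillarity term into $\diver\mathbb S_\eps(\nabla\phie(\re))$ so that the coefficients $2(\nu-c)$, $(2\nu-c)$, $\tilde\kappa^2$ emerge, and finally invoke the defining relation $p'_\eps=\mu\tilde p_\eps\he'/\re$ to turn $\tilde p_\eps(\re)\ue+\nabla p_\eps(\re)$ into $\tilde p_\eps(\re)\ve+\tilde\lambda\nabla p_\eps(\re)$. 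The only slip is the stray factor of~$2$ in your display $\diver\mathbb S_\eps(\nabla\phie(\re))=2\diver(\he D\nabla\phie)+\nabla(\gie\Delta\phie)$; with the paper's convention the~$2$ belongs to $2\nu\diver\mathbb S_\eps$ in the momentum equation, not to $\mathbb S_\eps$ itself.
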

\begin{proof}
Let $c\in\mathbb{R}$. From the first equation in \eqref{eq:aqns} we have that 
\begin{equation}\label{eq:id1}
c(\re\nabla\phie(\re))_t=-c\nabla(\dive(\he(\re)\ue))-c\nabla(\gie(\re)\dive \ue).
\end{equation}
Moreover, it is straightforward to prove that
\begin{equation}\label{eq:id2}
\begin{aligned}
c\dive(\re \ue\otimes\nabla\phie(\re)+\re\nabla\phie(\re)\otimes \ue)&=c\Delta(\he(\re) \ue)-2c\dive(\he(\re) D\ue)\\
&+c\nabla\dive(\he(\re) \ue)\\
\end{aligned}
\end{equation}
and 
\begin{equation}\label{eq:id3}
\begin{aligned}
c^{2}\dive(\re\nabla\phie(\re)\otimes\nabla\phie(\re))&=c^2\Delta(\he(\re)\nabla\phie(\re))-c^2\dive(\he(\re)\nabla^2\phie(\re)),
\end{aligned}
\end{equation}
see also \cite{J3}. Then, by using the definition of $\ve$ we have 
\begin{equation}
\begin{aligned}
&\partial_{t}(\re \ve)+\dive(\re \ve\otimes \ve)+\nabla\re^{\g}=\\
&\partial_{t}(\re \ue)+\dive(\re \ue\otimes \ue)+\nabla\re^{\g}\\
&+c(\re\nabla\phie(\re))_t+c\dive(\re \ue\otimes\nabla\phie(\re)+\re\nabla\phie(\re)\otimes \ue)\\
&+c^2\Delta(\he(\re)\nabla\phie(\re))-c^2\dive(\he(\re)\nabla^2\phie(\re))
\end{aligned}
\end{equation}
and by using \eqref{eq:id1}-\eqref{eq:id3} and the fact the $(\re,\ue)$ satisfies the momentum equation in \eqref{eq:aqns} we get 
\begin{equation}
\begin{aligned}
&\d_t(\re \ve)+\diver(\re \ve\otimes \ve)-c\Delta(\he(\re) \ve)+\nabla \re^{\gamma}\\
&-2(\nu-c)\dive(\he(\re)D\ve)-(2\nu-c)\nabla(\gie(\re)\dive \ve)+\tilde{p}(\re)\ve\\
&=c\tilde{p}_{\e}(\re)\nabla\phie(\re)-p'_{\e}(\re)\nabla\re+(\kappa^2-2\nu c+c^2)\diver\mathbb K_\eps(\rho_\eps).
\end{aligned}
\end{equation}
By using that $\ve=\ue+c\nabla\phie(\re)$, Lemma \ref{lem:formK} and the definition on $p_{\e}(\re)$ we get 
\begin{equation}\label{eq:320}
\begin{aligned}
&\d_t(\re \ve)+\diver(\re \ve\otimes \ve)-c\Delta(\he(\re) \ve)+\nabla \re^{\gamma}\\
&+c\dive(\he(\re)D\ve)-(2\nu-c)\dive\Sy_\eps(\ve)+\tilde{p}(\re)\ve\\
&=(c^{2}-2\nu c+\kappa^2)\dive\K_\eps(\re)-\frac{\mu-c}{\mu}\nabla p_{\e}(\re).
\end{aligned}
\end{equation}
Let us notice that, by taking $c=\mu$, then the right hand side in \eqref{eq:320} vanishes.
 \end{proof}

\section{A priori Estimates}\label{sec:apriori}
In this Section we are going to show that the approximating system satisfies, uniformly in $\eps>0$, the a priori estimates used in \cite{AS} to prove the compactness of weak solutions to \eqref{eq:qns}. First of all we prove the classical energy estimate for system \eqref{eq:aqns}.
\begin{proposition}\label{prop:energy}
Let $(\re,\ue)$ be a smooth solution of \eqref{eq:aqns}. Then, the following estimate holds.
\begin{equation}\label{eq:energy}
\begin{aligned}
&\frac{d}{dt}\left(\int\frac{\re|\ue|^2}{2}+\frac{\re^{\g}}{\g-1}+f_{\e}(\re)+2\kappa^2|\he'(\re)\nabla\rre|^2\right)\\
&+2\nu\int \he(\re)|D\ue|^2+2\nu\int \gie(\re)|\dive \ue|^2+\int\tilde{p}_{\e}(\re)|\ue|^2=0.
\end{aligned}
\end{equation}
\end{proposition}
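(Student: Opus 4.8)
The plan is to derive the energy identity \eqref{eq:energy} by the standard procedure for Navier-Stokes-Korteweg type systems: multiply the momentum equation in \eqref{eq:aqns} by $\ue$, integrate over $\T$, and combine with the continuity equation. Since we are told $(\re,\ue)$ is a smooth solution (and the initial data guarantee $\re>0$ is bounded away from zero, so no vacuum issues arise at this stage), all the integrations by parts on the torus are legitimate and there are no boundary terms.

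First I would handle the kinetic and pressure parts. Testing $\pt(\re\ue)+\dive(\re\ue\otimes\ue)$ against $\ue$ and using the continuity equation $\pt\re+\dive(\re\ue)=0$ gives, in the usual way, $\frac{d}{dt}\int\frac12\re|\ue|^2$. The term $\nabla\re^\g$ tested against $\ue$ produces $\frac{d}{dt}\int\frac{\re^\g}{\g-1}$ after an integration by parts and use of the continuity equation. The cold pressure term $\nabla p_\e(\re)$ tested against $\ue$ produces $\frac{d}{dt}\int f_\e(\re)$: this is precisely the reason $f_\e$ was introduced via $p_\e(\re)=\re f_\e'(\re)-f_\e(\re)$, since then $\int\nabla p_\e(\re)\cdot\ue=-\int p_\e(\re)\dive\ue=\int(\re f_\e'(\re)-f_\e(\re))\frac{\pt\re}{\re}=\frac{d}{dt}\int f_\e(\re)$ using $\pt\re=-\dive(\re\ue)$ and another integration by parts. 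The viscous term $-2\nu\dive\Sy_\eps$ tested against $\ue$ gives $+2\nu\int\he(\re)|D\ue|^2+2\nu\int\gie(\re)|\dive\ue|^2$ after integrating by parts and using the symmetry of $D\ue$. The damping term $\tilde p_\e(\re)\ue$ tested against $\ue$ directly yields $\int\tilde p_\e(\re)|\ue|^2$.

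The main point — and the step I expect to be the real work — is the capillarity term $\kappa^2\dive\K_\eps$ tested against $\ue$. Here I would use the formula from Lemma \ref{lem:formK}, namely $\dive\K_\eps=\nabla(\he'(\re)\Delta\he(\re))-4\dive(\he'(\re)\nabla\rre\otimes\he'(\re)\nabla\rre)$, or equivalently the form $\dive(\he(\re)\nabla^2\phie(\re))+\nabla(\gie(\re)\Delta\phie(\re))$. Testing against $\ue$, integrating by parts, and repeatedly invoking the continuity equation together with the identity $\re\phie'(\re)=\he'(\re)$ should reorganize everything into the time derivative $\frac{d}{dt}\int 2\kappa^2|\he'(\re)\nabla\rre|^2$. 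Concretely, one writes $\he'(\re)\nabla\rre=\frac12\frac{\he'(\re)}{\rre}\nabla\re$, differentiates $\int|\he'(\re)\nabla\rre|^2$ in time, substitutes $\pt\re=-\dive(\re\ue)$, and matches the resulting expression term by term with $\kappa^2\int\dive\K_\eps\cdot\ue$; the algebra is exactly the $\e$-deformed analogue of the classical Bohm potential energy computation $\int 2\rho\nabla(\Delta\rrho/\rrho)\cdot u=\frac{d}{dt}\int|\nabla\rrho|^2\cdot(\text{const})$ plus lower-order rearrangements, and it closes because the regularized capillarity and viscosity were chosen compatibly (this is the content of the remark following Lemma \ref{lem:formK}). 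Collecting all contributions yields \eqref{eq:energy}. The only mild subtlety is bookkeeping the various $\nabla\dive$ versus $\Delta$ viscous contributions so that the $\gie(\re)|\dive\ue|^2$ coefficient comes out as $2\nu$ and not something else; tracking signs carefully through the integrations by parts resolves this.
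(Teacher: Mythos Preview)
Your proposal is correct and follows essentially the same approach as the paper: test the momentum equation with $\ue$, use the continuity equation to convert the pressure and cold-pressure terms into time derivatives of $\int\frac{\re^\gamma}{\gamma-1}$ and $\int f_\eps(\re)$, and handle the capillarity contribution via Lemma~\ref{lem:formK} together with the continuity equation to produce $\frac{d}{dt}\int 2\kappa^2|\he'(\re)\nabla\rre|^2$. The paper organizes the last step by multiplying the continuity equation by $-2\kappa^2\he'(\re)\dive(\he'(\re)\nabla\rre)/\rre$ rather than differentiating the energy directly, but this is the same computation viewed from the other side.
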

\begin{proof}
Let us multiply the momentum equation in \eqref{eq:aqns} by $\ue$. After integrating by parts and using the first equation we get 
\begin{equation}\label{eq:energy1}
\begin{aligned}
&\frac{d}{dt}\int\frac{\re|\ue|^2}{2}+2\nu\int \he(\re)|D\ue|^2+2\nu\int \gie(\re)|\dive \ue|^2+\int\tilde{p}_{\e}(\re)|\ue|^2\\
&-\kappa^2\int\dive\K_{\e}\ue+\int\nabla(\re^{\g}+p_{\e}(\re))\ue=0.
\end{aligned}
\end{equation}
Then, we consider the pressure terms. By multiplying the first equation by $\frac{\g\re^{\g-1}}{\g-1}$ we get 
\begin{equation}\label{eq:energy2}
\frac{d}{dt}\int\frac{\re^\g}{\g-1}-\int\nabla\re^\g \ue=0.
\end{equation}
By multiplying again the first equation by $f'_{\e}(\re)$  
\begin{equation}\label{eq:energy3}
\frac{d}{dt}\int f_{\e}(\re)-\int\nabla p_{\e}(\re)\ue=0.
\end{equation} 
Finally, we deal with the dispersive term. By multiplying the first equation by\\
 $-2\kappa^2\he'(\re)\dive(\he'(\re)\nabla\rre)/\rre$ we get 
\begin{equation*}
\begin{aligned}
-2\kappa^2\int\partial_t\re\frac{\he'(\re)\dive(\he'(\re)\nabla\rre)}{\rre}&-2\kappa^2\int\dive(\re \ue)\frac{\he'(\re)\dive(\he'(\re)\nabla\rre)}{\rre}=0.\\
\end{aligned}
\end{equation*}
Then, by using Lemma \ref{lem:formK}, integrating by parts and using the chain rule we get 
\begin{equation}\label{eq:energy4}
\frac{d}{dt}\int2\kappa^2|\he'(\re)\nabla\rre|^2+\kappa^2\int\dive\K_{\e} \ue=0.
\end{equation}
By summing up \eqref{eq:energy1}, \eqref{eq:energy2}, \eqref{eq:energy3} and \eqref{eq:energy4} we get \eqref{eq:energy}.
\end{proof}
Next Lemma gives the energy estimate for the transformed system \eqref{eq:avqns}. 
\begin{proposition}\label{prop:bdentropy}
Let $(\rho_\eps, u_\eps)$ be a smooth solution to \eqref{eq:aqns} and let us consider $(\rho_\eps, v_\eps)$, where $v_\eps$ is the effective velocity $v_\eps=u_\eps+c\nabla\phi_\eps(\rho_\eps)$, with $c\in(0, \mu)$. Then we have
\begin{equation}\label{eq:bd}
\begin{aligned}
&\frac{d}{dt}\left(\int\frac{\re|\ve|^2}{2}+\frac{\re^{\g}}{\g-1}+\tilde\lambda f_{\e}(\re)+2\tilde{\kappa}^2|\he'(\re)\nabla\rre|^2\right)\\
&+c\int \he(\re)|A\ve|^2+(2\nu-c)\int( \he(\re)|D\ve|^2+\gie(\re)|\dive \ve|^2)\\
&+\int\tilde{p}_{\e}(\re)|\ve|^2+c\g\int \he'(\re)|\nabla\re|^2\re^{\g-2}+c\tilde{\lambda}\int \he'(\re)|\nabla\re|^2f_{\e}''(\re)\\
&+c\tilde{\kappa}^2\int \he(\re)|\nabla^{2}\phie(\re)|^2+c\tilde{\kappa}^2\int \gie(\re)|\Delta\phie(\re)|^2=0,
\end{aligned}
\end{equation}
where $\tilde{\lambda}=(\mu-c)/\mu$, $\tilde{\kappa}^2=c^{2}-2\nu c+\kappa^2$ and we recall that $A\ve=(\nabla \ve-(\nabla \ve)^T)/2$ is the antisymmetric part of the gradient.
\end{proposition}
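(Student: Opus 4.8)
The plan is to derive \eqref{eq:bd} as the natural energy balance of the transformed system: by Lemma \ref{lem:BDtransf} the pair $(\re,\ve)$ solves \eqref{eq:avqns}, and the only structural novelty with respect to the proof of Proposition \ref{prop:energy} is the diffusive term $c\Delta\he(\re)$ now appearing in the continuity equation, which is exactly what generates the extra positive terms of \eqref{eq:bd} that are quadratic in $\nabla\re$ and in $\nabla^{2}\phie(\re)$. Concretely, I would multiply the momentum equation in \eqref{eq:avqns} by $\ve$, integrate over $\T$, and then add to the resulting identity suitable multiples of the continuity equation $\pt\re+\dive(\re\ve)=c\Delta\he(\re)$.

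For the convective part one uses the standard algebraic identity writing $\pt(\re\ve)\cdot\ve+\dive(\re\ve\otimes\ve)\cdot\ve$ as $\pt\!\left(\frac{\re|\ve|^2}{2}\right)$ plus a divergence plus $\frac{|\ve|^2}{2}\big(\pt\re+\dive(\re\ve)\big)$, and then the continuity equation to replace the last factor by $c\Delta\he(\re)$; integrating on $\T$ this gives $\frac{d}{dt}\int\frac{\re|\ve|^2}{2}+\frac{c}{2}\int|\ve|^2\Delta\he(\re)$. Testing $-c\Delta(\he(\re)\ve)$ against $\ve$ and integrating by parts twice (expanding $\Delta(\he(\re)\ve)$) yields $c\int\he(\re)|\nabla\ve|^2-\frac{c}{2}\int|\ve|^2\Delta\he(\re)$, so the two terms $\frac{c}{2}\int|\ve|^2\Delta\he(\re)$ cancel; splitting $|\nabla\ve|^2=|D\ve|^2+|A\ve|^2$, the part $c\int\he(\re)|D\ve|^2$ combines with the $2(\nu-c)\int\he(\re)|D\ve|^2$ produced by $-2(\nu-c)\dive(\he(\re)D\ve)$ to give $(2\nu-c)\int\he(\re)|D\ve|^2$, while $-(2\nu-c)\nabla(\gie(\re)\dive\ve)$ contributes $(2\nu-c)\int\gie(\re)|\dive\ve|^2$ and the damping term immediately gives $\int\tilde p_\eps(\re)|\ve|^2$. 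For the two pressure terms I would multiply the continuity equation by $\frac{\g}{\g-1}\re^{\g-1}$ and by $\tilde\lambda f_\eps'(\re)$ respectively: after one integration by parts, and using $\nabla p_\eps(\re)=\re f_\eps''(\re)\nabla\re$, the transport parts reproduce exactly $-\int\nabla\re^\g\cdot\ve$ and $-\tilde\lambda\int\nabla p_\eps(\re)\cdot\ve$, cancelling the corresponding momentum terms and leaving $\frac{d}{dt}\int\frac{\re^\g}{\g-1}$ and $\frac{d}{dt}\int\tilde\lambda f_\eps(\re)$, whereas the diffusive parts $c\frac{\g}{\g-1}\int\re^{\g-1}\Delta\he(\re)$ and $c\tilde\lambda\int f_\eps'(\re)\Delta\he(\re)$ integrate by parts to $-c\g\int\he'(\re)\re^{\g-2}|\nabla\re|^2$ and $-c\tilde\lambda\int\he'(\re)f_\eps''(\re)|\nabla\re|^2$, i.e. precisely the matching positive terms of \eqref{eq:bd}.

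The delicate point is the dispersive term. Following the proof of Proposition \ref{prop:energy} I would multiply the continuity equation by $-2\tilde\kappa^2\he'(\re)\dive(\he'(\re)\nabla\rre)/\rre$: its $\pt\re$ part yields $\frac{d}{dt}\int 2\tilde\kappa^2|\he'(\re)\nabla\rre|^2$ by the chain rule (writing $\he'(\re)\nabla\rre=\nabla H(\re)$ for an appropriate primitive $H$), its $\dive(\re\ve)$ part yields $\tilde\kappa^2\int\ve\cdot\dive\K_\eps$ by the defining relation $\dive\K_\eps=2\re\nabla\!\left(\he'(\re)\dive(\he'(\re)\nabla\rre)/\rre\right)$, and its diffusive part equals $-2\tilde\kappa^2 c\int\Delta\he(\re)\,\he'(\re)\dive(\he'(\re)\nabla\rre)/\rre$. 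Rewriting this last term via $\nabla\he(\re)=\re\nabla\phie(\re)$ and the second identity of Lemma \ref{lem:formK}, namely $\dive\K_\eps=\dive(\he(\re)\nabla^{2}\phie(\re))+\nabla(\gie(\re)\Delta\phie(\re))$, and integrating by parts, it becomes, once transported to the left-hand side, $c\tilde\kappa^2\int\he(\re)|\nabla^{2}\phie(\re)|^2+c\tilde\kappa^2\int\gie(\re)|\Delta\phie(\re)|^2$. Collecting all contributions and regrouping the time derivatives yields exactly \eqref{eq:bd}.

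The main obstacle I expect is precisely this last computation: one has to keep track of several integrations by parts together with the identities $\nabla\he(\re)=\re\nabla\phie(\re)$ and $\he'(\re)\nabla\rre=\frac12\sqrt{\re}\,\nabla\phie(\re)$, and above all verify that the diffusive correction coming from the continuity equation produces the positive second-order terms above rather than a term of the wrong sign; this is where the consistency of the regularized capillarity tensor $\K_\eps$ with the effective viscous tensor, established in Lemma \ref{lem:formK}, is essential. Finally one checks that the coefficients are well-signed: $\tilde\lambda=(\mu-c)/\mu>0$, $2\nu-c>0$, and $\tilde\kappa^2=c^{2}-2\nu c+\kappa^2>0$ for $c\in(0,\mu)$ in view of the definition \eqref{eq:maincost} of $\mu$, so that every new term in \eqref{eq:bd} indeed has a sign.
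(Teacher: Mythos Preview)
Your proposal is correct and follows essentially the same route as the paper's proof: multiply the momentum equation of \eqref{eq:avqns} by $\ve$ (using the continuity equation to handle the convective part together with the $-c\Delta(\he(\re)\ve)$ term, yielding the split $|\nabla\ve|^2=|D\ve|^2+|A\ve|^2$), then multiply the continuity equation successively by $\frac{\g}{\g-1}\re^{\g-1}$, by $\tilde\lambda f'_\eps(\re)$, and by $-2\tilde\kappa^2\he'(\re)\dive(\he'(\re)\nabla\rre)/\rre$, and sum. Your treatment of the diffusive contribution $c\Delta\he(\re)$ in the dispersive step via Lemma~\ref{lem:formK} is exactly the paper's computation; the explicit cancellation $\frac{c}{2}\int|\ve|^2\Delta\he(\re)$ that you spell out is implicit in the paper's phrase ``integrating by parts and using the first equation''.
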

\begin{proof}
Since $(\re,\ue)$ is a smooth solution of \eqref{eq:aqns} we can use Lemma \ref{lem:BDtransf} to deduce that $(\re, \ve)$ satisfies equations \eqref{eq:avqns}. Then, by multiplying the momentum equation by $\ve$, integrating by parts and using the first equation we get 
\begin{equation}\label{eq:bdentropy1}
\begin{aligned}
&\frac{d}{dt}\int \frac{\re|\ve|^2}{2}+c\int \he(\re)|A\ve|^2+(2\nu-c)\int (\he(\re)|D\ve|^2+ \gie(\re)|\dive \ve|^2)\\
&+\int\nabla\re^{\g}\cdot \ve+\tilde{\lambda}\int\nabla p_{\e}(\re)\cdot \ve+\int\tilde{p}_{\e}(\re)|\ve|^2-\tilde{\kappa}^2\int\dive\K_{\e}\ve=0,
\end{aligned}
\end{equation}
where we used that $|\nabla\ve|^2=|D\ve|^2+|A\ve|^2$. 
Then, by multiplying the first equation by $\frac{\g\re^{\g-1}}{\g-1}$ and integrating by parts we get 
\begin{equation}\label{eq:bdentropy2}
\begin{aligned}
\frac{d}{dt}\int\frac{\re^{\gamma}}{\gamma-1}+c\g\int|\nabla\re|^2\he'(\re)\re^{\g-2}-\int\nabla\re^{\g}\ve=0.
\end{aligned}
\end{equation}
By multiplying again the first equation by $\tilde{\lambda}f'_{\e}(\re)$ we get
\begin{equation}\label{eq:bdentropy3}
\frac{d}{dt}\int \tilde{\lambda}f_{\e}(\re)-\tilde{\lambda}\int\nabla p_{\e}(\re)\ve+c\tilde{\lambda}\int \he'(\re)|\nabla\re|^2f_{\e}''(\re)=0.
\end{equation} 
Then, we consider the dispersive term. By multiplying the first equation\\ by $-2\tilde{\kappa}^2\he'(\re)\dive(\he'(\re)\nabla\rre)/\rre$ we get 
\begin{equation*}
\begin{aligned}
\\
&-2\tilde{\kappa}^2\int\partial_t\re\frac{\he'(\re)\dive(\he'(\re)\nabla\rre)}{\rre}-2\tilde{\kappa}^2\int\dive(\re \ve)\frac{\he'(\re)\dive(\he'(\re)\nabla\rre)}{\rre}\\
&+c2\tilde{\kappa}^2\int\Delta \he(\re)\frac{\he'(\re)\dive(\he'(\re)\nabla\rre)}{\rre}=0.
\end{aligned}
\end{equation*}
The first two terms are treated as in Proposition \ref{prop:energy} and we get 
\begin{equation*}
\begin{aligned}
&-2\tilde\kappa^2\int\d_t\rho_\eps\frac{h'_\eps(\rho_\eps)\diver(h'_\eps(\rho_\eps)\nabla\sqrt{\rho_\eps})}{\sqrt{\rho_\eps}}-2\tilde\kappa^2\int\diver(\rho_\eps v_\eps)\frac{h'_\eps(\rho_\eps)\diver(h'_\eps(\rho_\eps)\nabla\sqrt{\rho_\eps})}{\sqrt{\rho_\eps}}\\
&\quad=\frac{d}{dt}\int2\tilde\kappa^2|h'_\eps(\rho_\eps)\nabla\sqrt{\rho_\eps}|^2-\tilde{\kappa}^2\int\dive\K_{\e}\ve.
\end{aligned}
\end{equation*}
We then consider the last term,  by integrating by parts  we get
\begin{equation*}
\begin{aligned}
2\int\Delta \he(\re)\frac{\he'(\re)\dive(\he'(\re)\nabla\rre)}{\rre}=&-2\int \nabla\phie(\re)\re\nabla\left(\frac{\he'(\re)\dive(\he'(\re)\nabla\rre)}{\rre}\right)\\
=&-\int\nabla\phie(\re)\dive\K_{\e}\\
=&-\int\nabla\phie(\re)\dive(\he(\re)\nabla^{2}\phie(\re)\\
&-\int\nabla\phie(\re)\nabla(\gie(\re)\Delta\phie(\re),
\end{aligned}
\end{equation*}
where Lemma \ref{lem:formK} has been used. By integrating by parts we get 
\begin{equation*}
2\int\Delta \he(\re)\frac{\he'(\re)\dive(\he'(\re)\nabla\rre)}{\rre}=\int \he(\re)|\nabla^{2}\phie(\re)|^2+\int \gie(\re)|\Delta\phie(\re)|^2.
\end{equation*}
Resuming, we have
\begin{equation}\label{eq:bdentropy4}
\begin{aligned}
&\frac{d}{dt}\int2\tilde{\kappa}^2|\he'(\re)\nabla\rre|^2+\tilde{\kappa}^2\int\dive\K(\re) \ve\\
&+c\tilde{\kappa}^2\int \he(\re)|\nabla^{2}\phie(\re)|^2+c\tilde{\kappa}^2\gie(\re)|\Delta\phie(\re)|^2=0
\end{aligned}
\end{equation}
By summing up \eqref{eq:bdentropy1}, \eqref{eq:bdentropy2}, \eqref{eq:bdentropy3} and \eqref{eq:bdentropy4} we get \eqref{eq:bd}
\end{proof}
Let us now choose the constant in the effective velocity to be exactly the constant defined in \eqref{eq:maincost}, $\mu=\nu-\sqrt{\nu^2-\kappa^2}$. Throughout this paper we will denote by $w^\eps$ the effective velocity with this particular choice of the constant, i.e. $w_\eps=u_\eps+\mu\nabla\phi_\eps(\rho_\eps)$. As we already noticed, in this case both the dispersive term and the cold pressure term vanish in \eqref{eq:avqns}, so that the system reads
\begin{equation}\label{eq:awqns}
\begin{aligned}
&\d_t\re+\diver(\re \we)=\mu\Delta \he(\re),\\
&\d_t(\re \we)+\diver(\re \we\otimes \we)-\mu\Delta(\he(\re) \we)+\nabla \re^{\gamma} +\tilde{p}(\re)\we\\
&-2(\nu-\mu)\dive(\he(\re)D\we)-(2\nu-\mu)\nabla(\gie(\re)\dive\we)=0.
\end{aligned}
\end{equation}
Analogously to what we did in \cite{AS}, we now prove a Mellet-Vasseur type estimate for \eqref{eq:awqns}. We will first prove an auxiliary Lemma which will also be useful later in section \ref{sec:appex}, see Lemma \ref{prop:ldl4}.
\begin{lemma}\label{lem:renormalized}
Let $(\re, \ue)$ be a solution of the system \eqref{eq:aqns}. Then, for any $\beta\in C^{1}(\R)$ the pair $(\re, \we)$ satisfies the following integral equation
\begin{equation}\label{eq:renormalization}
\begin{aligned}
&\frac{d}{dt}\int\re\be+\mu\int \he(\re)|A\we\cdot\we|^2\bedp+\mu\int \he(\re)|A \we|^2\bep\\
&+(2\nu-\mu)\int \he(\re)|D \we|^2\bep+(2\nu-\mu)\int \gie(\re)|\dive \we|^2\bep\\
&+\int\tilde{p}_{\e}(\re)|\we|^2\bep+(2\nu-\mu)\int \he(\re)|D \we\cdot \we|^2\bedp\\
&=-\int\nabla\re^{\g}\we\bep-2\nu\int \he(\re)(D\we\cdot\we)\cdot(A\we\cdot\we)\bedp\\
&-(2\nu-\mu)\int \gie(\re)\dive \we\we\cdot(D\we\cdot\we)\bedp.
\end{aligned}
\end{equation}
\end{lemma}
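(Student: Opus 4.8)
The plan is to derive \eqref{eq:renormalization} by testing the momentum equation of the transformed system \eqref{eq:awqns} against $\beta'(|w_\eps|^2/2)w_\eps$, in the same spirit as the Mellet--Vasseur computation in \cite{AS}, but keeping track of the extra lower-order terms coming from the damping $\tilde p_\eps$ and from the regularized viscosity/capillarity. Since the statement assumes $(\re,\ue)$ is a (smooth) solution of \eqref{eq:aqns}, Lemma \ref{lem:BDtransf} with $c=\mu$ applies and gives us that $(\re,\we)$ solves \eqref{eq:awqns}, so all manipulations below are on a classical solution and integrations by parts are justified. First I would record the chain-rule identity $\partial_t(\beta(|w_\eps|^2/2)) = \beta'(|w_\eps|^2/2)\, w_\eps\cdot\partial_t w_\eps$ and similarly for spatial derivatives, and combine $\partial_t(\re\we)+\diver(\re\we\otimes\we)$ tested against $\beta'(|w_\eps|^2/2)w_\eps$ with the continuity equation $\partial_t\re+\diver(\re\we)=\mu\Delta\he(\re)$ tested against $\beta(|w_\eps|^2/2)$, so that the convective and transport terms assemble into $\frac{d}{dt}\int\re\,\be$ up to a correction involving $\mu\Delta\he(\re)$.

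Next I would treat each remaining term of the momentum equation separately. The pressure term $\nabla\re^\gamma$ contributes directly $-\int\nabla\re^\gamma\,\we\,\bep$, which is the first term on the right-hand side. The damping term $\tilde p_\eps(\re)\we$ tested against $\bep\,\we$ gives $\int\tilde p_\eps(\re)|\we|^2\,\bep$. For the viscous terms $-2(\nu-\mu)\diver(\he(\re)D\we)$ and $-(2\nu-\mu)\nabla(\gie(\re)\diver\we)$, integration by parts produces, after using $\nabla(\bep\we)=\bep\nabla\we+\bedp\,(\nabla\we\cdot\we)\otimes\we$ type expansions, the ``good'' quadratic terms $(2\nu-\mu)\int\he(\re)|D\we|^2\bep$, $(2\nu-\mu)\int\gie(\re)|\diver\we|^2\bep$, the term $(2\nu-\mu)\int\he(\re)|D\we\cdot\we|^2\bedp$, plus the cross term $-(2\nu-\mu)\int\gie(\re)\diver\we\,\we\cdot(D\we\cdot\we)\bedp$ on the right. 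The most delicate piece is the dispersive-type term $-\mu\Delta(\he(\re)\we)$ combined with the $\mu\Delta\he(\re)$ correction coming from the modified continuity equation: these two must be handled together, and after integration by parts and the algebraic identity $\nabla\we : \nabla\we = |D\we|^2+|A\we|^2$ they should yield the remaining $A\we$ contributions, namely $\mu\int\he(\re)|A\we|^2\bep$ and $\mu\int\he(\re)|A\we\cdot\we|^2\bedp$, together with the antisymmetric/symmetric cross term $-2\nu\int\he(\re)(D\we\cdot\we)\cdot(A\we\cdot\we)\bedp$ on the right-hand side. Summing everything gives \eqref{eq:renormalization}.

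The main obstacle, as usual for Mellet--Vasseur type identities, is the bookkeeping for the $\mu\Delta(\he(\re)\we)$ term: one has to expand $\Delta(\he(\re)\we)$, pair it with $\beta'(|w_\eps|^2/2)w_\eps$, integrate by parts twice, and carefully recombine with the term generated when $\mu\Delta\he(\re)$ from the continuity equation is multiplied by $\beta(|w_\eps|^2/2)$ (and by $\frac12\beta'\,|w_\eps|^2$ through the transport structure). Keeping the symmetric and antisymmetric parts of $\nabla\we$ separated throughout, and being careful that the coefficient in front of $|A\we|^2\bep$ ends up being exactly $\mu$ while the one in front of $|D\we|^2\bep$ is $2\nu-\mu$, is where all the care is needed; the rest is routine. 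I would also note that no sign or structural condition is used here — \eqref{eq:renormalization} is a pure identity valid for every $\beta\in C^1(\R)$ — the positivity of the coefficients and the choice of $\beta$ will only be exploited afterwards when this Lemma is applied.
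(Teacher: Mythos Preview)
Your proposal is correct and follows essentially the same route as the paper: test the momentum equation of \eqref{eq:awqns} against $\we\bep$, combine with the continuity equation (which carries the extra $\mu\Delta\he(\re)$), integrate by parts in the $-\mu\Delta(\he(\re)\we)$ term, and then split $\nabla\we$ into its symmetric and antisymmetric parts to obtain the coefficients $\mu$ on the $A\we$ terms and $2\nu-\mu$ on the $D\we$ terms, with the $2\nu$ cross term arising from $2\mu+2(\nu-\mu)$. The paper carries this out via the explicit identities \eqref{eq:renormalization1}--\eqref{eq:renomarlized4}, which are exactly the computations you outline.
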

\begin{proof}
Let $\beta\in C^{1}(\R)$. By a simple integration by parts we get 
\begin{equation*}
\begin{aligned}
-\mu\int\Delta(\he(\re)\we)\we\bep=&-\mu\int\Delta \he(\re)|\we|^2\bep\\
                                              &+\mu\int\Delta \he(\re)\be\\
                                              &+\mu\int \he(\re)|\nabla \we|^2\bep\\
                                             &+\mu\int \he(\re)\left|\nabla\frac{|\we|^2}{2}\right|^2\bedp.
\end{aligned}                                             
\end{equation*}
Then, by multiplying the second equation in \eqref{eq:awqns} by $\we\bep$ we get 
\begin{equation}\label{eq:renormalization1}
\begin{aligned}
&\frac{d}{dt}\int\re\be+\mu\int \he(\re)\left|\nabla\left(\frac{|\we|^2}{2}\right)\right|^2\bedp\\
&2(\nu-\mu)\int \he(\re)|D \we|^2\bep+(2\nu-\mu)\int \gie(\re)|\dive \we|^2\bep\\
&+\int\tilde{p}_{\e}(\re)|\we|^2\bep+2(\nu-\mu)\int D\we \we\nabla \we \we\bedp\\
&=-\int\nabla\re^{\g}\we\bep-(2\nu-\mu)\int \gie(\re)\dive \we\we\nabla\left(\frac{|\we|^2}{2}\right)\bedp.\\
\end{aligned}
\end{equation}
Let us consider the last term on the left-hand side of the equality, we have 
\begin{equation}\label{eq:renormalization2}
\begin{aligned}
&2(\nu-\mu)\int\he(\re) \left(\frac{\partial_j \we^i+\partial_i \we^j}{2}\right) w^i\partial_j \we^l \we^l\bedp=\\
&2(\nu-\mu)\int\he(\re) \left(\frac{\partial_j \we^i+\partial_i \we^j}{2}\right) \we^i\left(\frac{\partial_j \we^l+\partial_l\we^j}{2}\right) \we^l\bedp+\\
&2(\nu-\mu)\int\he(\re) \left(\frac{\partial_j \we^i+\partial_i \we^j}{2}\right) \we^i\left(\frac{\partial_j \we^l-\partial_l\we^j}{2}\right) \we^l\bedp.
\end{aligned}
\end{equation}
Concerning the last term in the right-hand side of the inequality we have 
\begin{equation}\label{eq:renormalization3}
\begin{aligned}
&\int \gie(\re)\dive \we\we^i\partial_i\we^l\we^l\bedp\\
&=\int \gie(\re)\dive \we \we^i\left(\frac{\partial_i\we^l+\partial_l\we^i}{2}\right)\we^l\bedp\\
&+\int \gie(\re)\dive \we \we^i\left(\frac{\partial_i\we^l-\partial_l\we^i}{2}\right)\we^l\bedp\\
&=\int \gie(\re)\dive \we \we^i\left(\frac{\partial_i\we^l+\partial_l\we^i}{2}\right)\we^l\bedp.
\end{aligned}
\end{equation}
Finally, by using that 
\begin{equation}\label{eq:renomarlized4}
\begin{aligned}
&\int\he(\re)\left|\nabla\left(\frac{|\we|^2}{2}\right)\right|^2\bedp\\
&=\int\he(\re)|A\we\cdot \we|^2\bedp\\
&+\int\he(\re)|D\we\cdot \we|^2\bedp\\
&+2\int\he(\re)(A\we\cdot\we)(D\we\cdot\we)\bedp.
\end{aligned}
\end{equation}

Then, by using \eqref{eq:renormalization2}, \eqref{eq:renormalization3} and \eqref{eq:renomarlized4}, we get from \eqref{eq:renormalization1} exactly \eqref{eq:renormalization}.
\end{proof}
Now, we are in position to prove the Mellet \& Vasseur type inequality. 
\begin{proposition}\label{prop:mvinequality}
Let $(\re, \ue)$ be a smooth solution of \eqref{eq:aqns}. Then, there exists and a generic constant $C>0$ independent on $\e$ such that $(\re,\we)$ sastifies
\begin{equation}\label{eq:mvinequality}
\begin{aligned}
&\sup_{t\in (0,T)}\int\re\left(1+\frac{|\we|^2}{2}\right)\log\left(1+\frac{|\we|^2}{2}\right)\leq C\iint \he(\re)|\nabla\we|^2\\
&+\int\!\!\left(\int\re^{(2\gamma-\delta/2-1)(2/(2-\delta))}dx\right)^{\frac{2-\delta}{2}}\!\!\left(\int\re \left(1+\log\left(1+\frac{|\we|^2}{2}\right)\right)^{\frac{2}{\delta}}\!dx\right)\!dt\\
&+\int\re^0\left(1+\frac{|\we^0|^2}{2}\right)\log\left(1+\frac{|\we^0|^2}{2}\right)
\end{aligned}
\end{equation}
for any $\delta\in(0,2)$.
\end{proposition}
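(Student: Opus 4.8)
The strategy is to apply Lemma \ref{lem:renormalized} with the specific choice $\beta(s) = (1+s)\log(1+s)$, so that $\beta'(s) = 1+\log(1+s)$ and $\beta''(s) = 1/(1+s) > 0$. With this choice, $\int \re\,\be$ is precisely the Mellet--Vasseur quantity on the left-hand side, and all the terms on the left of \eqref{eq:renormalization} carrying $\bep$ are nonnegative (here we use $\he(\re)\ge 0$, $\tilde p_\e(\re)\ge 0$, and the coercivity inequality $\he(\re)|D\we|^2+\gie(\re)|\dive\we|^2 \ge \tfrac58 \he(\re)|D\we|^2$ from \eqref{eq:stin2}, together with $2\nu-\mu>0$). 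The terms with $\bedp$ need a closer look: the genuinely ``good'' term $\mu\int \he(\re)|A\we\cdot\we|^2\bedp$ and $(2\nu-\mu)\int \he(\re)|D\we\cdot\we|^2\bedp$ are nonnegative, while on the right-hand side the cross terms $-2\nu\int \he(\re)(D\we\cdot\we)\cdot(A\we\cdot\we)\bedp$ and $-(2\nu-\mu)\int\gie(\re)\dive\we\,\we\cdot(D\we\cdot\we)\bedp$ are sign-indefinite. The first step, then, is to absorb these cross terms: I would use Young's inequality, $|(D\we\cdot\we)(A\we\cdot\we)| \le \tfrac12 \theta |A\we\cdot\we|^2 + \tfrac1{2\theta}|D\we\cdot\we|^2$ and similarly for the $\gie$ term using $|\gie(\re)|\le(\gamma-1)\he(\re)$, choosing the small parameters so that what remains is controlled by the nonnegative $\bedp$-terms on the left. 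This is exactly the computation that in \cite{AS} forces a relation between $\nu$ and $\kappa$ (via $\mu$); here I expect the same algebra to go through under $\kappa<\nu$ (the two-dimensional hypothesis of Theorem \ref{teo:main1}) — and this balancing of the quadratic form is the main obstacle.

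Once the cross terms are absorbed, integrating \eqref{eq:renormalization} in time from $0$ to $t$ and discarding the remaining nonnegative left-hand terms yields
\begin{equation*}
\int\re\be(t) \le \int\re^0\be^0 - \int_0^t\!\!\int\nabla\re^\g\cdot\we\,\bep + C\int_0^t\!\!\int \he(\re)|\nabla\we|^2,
\end{equation*}
where the last term collects whatever portion of the dissipation was spent absorbing the cross terms; this is the first term on the right of \eqref{eq:mvinequality}, and it is finite and controlled uniformly in $\e$ by the energy estimate of Proposition \ref{prop:energy} (since $\he(\re)|D\we|^2$ is integrable from \eqref{eq:bd}, and $|\nabla\we|^2 = |D\we|^2+|A\we|^2$). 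The third term on the right of \eqref{eq:mvinequality} is just the initial datum, finite by \eqref{eq:mvi}. So everything reduces to estimating the pressure term $-\int_0^t\!\!\int\nabla\re^\g\cdot\we\,\bep$.

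For the pressure term I would integrate by parts to move the derivative off $\re^\g$, writing $-\int\nabla\re^\g\cdot\we\,\bep = \int \re^\g\,\dive(\we\,\bep)$ and expanding $\dive(\we\,\bep) = \bep\,\dive\we + \bedp\,\we\cdot\nabla(|\we|^2/2)$; alternatively, and more in the spirit of \cite{MV}, one writes $\re^\g \we\,\bep = \re^{\g-1/2}\cdot \rre\we\,\bep$ and splits via Hölder with exponents tied to the free parameter $\delta\in(0,2)$, bounding one factor by $(\int\re^{(2\gamma-\delta/2-1)\cdot 2/(2-\delta)})^{(2-\delta)/2}$ and the other by $\int\re(1+\log(1+|\we|^2/2))^{2/\delta}$ — precisely the structure displayed as the second term on the right of \eqref{eq:mvinequality}. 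The point is that $\bep = 1+\log(1+|\we|^2/2)$ grows only logarithmically in $|\we|$, so $\rre\we\,\bep$ is essentially $\rre|\we|$ times a log factor, keeping everything at the level of the energy density; the high power of $\re$ in the first Hölder factor is harmless because $\rho_\e$ is bounded above uniformly (by \eqref{eq:hyaid}, $\re\le\bar\rho^0$), and the remaining Gronwall-type term is closed off in the final passage to the limit. I would keep $\delta$ generic throughout, as the statement requires, and note that the same computation works verbatim in $d=3$.
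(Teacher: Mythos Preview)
Your overall plan---apply Lemma \ref{lem:renormalized} with $\beta(s)=(1+s)\log(1+s)$, dispose of the $\beta''$ terms, integrate the pressure by parts, then H\"older---is exactly the paper's route. But your handling of the $\beta''$ cross terms has a real gap, and this is precisely the place you flag as the ``main obstacle.''

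You propose to absorb the sign-indefinite term $-2\nu\int h_\eps(\rho_\eps)(Dw_\eps\cdot w_\eps)\cdot(Aw_\eps\cdot w_\eps)\,\beta''$ into the good $\beta''$ terms $\mu\int h_\eps|Aw_\eps\cdot w_\eps|^2\,\beta''$ and $(2\nu-\mu)\int h_\eps|Dw_\eps\cdot w_\eps|^2\,\beta''$ on the left via Young. That does not work: the quadratic form $\mu a^2+(2\nu-\mu)d^2-2\nu|a||d|$ has discriminant $4\nu^2-4\mu(2\nu-\mu)=4(\nu-\mu)^2>0$ whenever $\kappa<\nu$, so it is \emph{never} nonnegative. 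No relation between $\nu$ and $\kappa$ saves this; the balancing you have in mind is the one carried out in Lemma \ref{prop:ldl4} for the power weight $\beta(s)=s^{1+\delta}$, and there it is the extra smallness coming from $\beta''(s)s=\delta\beta'(s)$ that closes it (and forces the $3$D restriction $\nu^2<\tfrac98\kappa^2$). For the logarithmic weight the mechanism is different and simpler: since $\beta''(|w_\eps|^2/2)=\bigl(1+|w_\eps|^2/2\bigr)^{-1}$, one has the pointwise bound $|w_\eps|^2\,\beta''(|w_\eps|^2/2)\le 2$, and therefore \emph{every} $\beta''$ term---good or bad, including the $g_\eps$ cross term---is dominated by $C\,h_\eps(\rho_\eps)|\nabla w_\eps|^2$. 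That is how the paper produces the first term on the right of \eqref{eq:mvinequality}, with no constraint on $\nu,\kappa$ beyond $\kappa<\nu$.

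A second, smaller point: do not discard the $\beta'$ dissipation terms on the left. After integrating the pressure by parts, $\int\rho_\eps^\gamma\dive w_\eps\,\beta'$ is split by Young into $C\int\rho_\eps^{2\gamma-1}\beta'$ and $\tfrac12\int\rho_\eps|\nabla w_\eps|^2\beta'$; the latter carries the unbounded factor $\beta'=1+\log(1+|w_\eps|^2/2)$ and must be absorbed by the retained term $\iint\rho_\eps|\nabla w_\eps|^2\log(1+|w_\eps|^2/2)$ on the left, not by $\iint h_\eps|\nabla w_\eps|^2$. The H\"older factorisation in the statement then comes from splitting $\rho_\eps^{2\gamma-1}(1+\log(\cdot))=\rho_\eps^{2\gamma-1-\delta/2}\cdot\rho_\eps^{\delta/2}(1+\log(\cdot))$ with exponents $\tfrac{2}{2-\delta}$ and $\tfrac{2}{\delta}$---not from a direct H\"older on $\nabla\rho_\eps^\gamma\cdot w_\eps\,\beta'$ as your ``alternative'' suggests.
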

\begin{proof}
By choosing $\beta(t)=(1+t)\log(1+t)$ in Lemma \ref{lem:renormalized} and keeping only the terms we need we get that there exists a generic constant $C>0$ 
independent on $\e$ such that
\begin{equation*}
\begin{aligned}
&\sup_{t\in (0,T)}\int\re\left(1+\frac{|\we|^2}{2}\right)\log\left(1+\frac{|\we|^2}{2}\right)+\iint\re|\nabla \we|^2\log\left(1+\frac{|\we|^2}{2}\right)\\
&\leq C\left|\int\nabla\re^{\g}\we\bep\right|+C\int \he(\re)|\nabla\we|^2+C\int |\gie(\re)||\dive \we||\nabla\we|\\
&+\int\re^0\left(1+\frac{|\we^0|^2}{2}\right)\log\left(1+\frac{|\we^0|^2}{2}\right).
\end{aligned}
\end{equation*}
Then, by using \eqref{eq:stin1}, integrating by parts the first term, using H\"older and Young inequality we get 
\begin{equation*}
\begin{aligned}
&\sup_{t\in (0,T)}\int\re\left(1+\frac{|\we|^2}{2}\right)\log\left(1+\frac{|\we|^2}{2}\right)+\iint\re|\nabla \we|^2\log\left(1+\frac{|\we|^2}{2}\right)\\
&\leq C\iint\re^{2\g-1}\left((1+\log\left(1+\frac{|\we|^2}{2}\right)\right)+\frac{1}{2}\iint\re|\nabla \we|^2\log\left(1+\frac{|\we|^2}{2}\right)\\
&+C\iint \he(\re)|\nabla\we|^2+\int\re^0\left(1+\frac{|\we^0|^2}{2}\right)\log\left(1+\frac{|\we^0|^2}{2}\right).
\end{aligned}
\end{equation*}
Finally, for $\delta\in(0,2)$ by using H\"older inequality we get 
\begin{equation}\label{eq:mv5}
\begin{aligned}
&\iint\re^{2\gamma-1}\left(1+\log\left(1+\frac{|\we|^2}{2}\right)\right)\\
&\leq\int\!\!\left(\int\re^{(2\gamma-\delta/2-1)(2/(2-\delta))}dx\right)^{\frac{2-\delta}{2}}\!\!\left(\int\re \left(1+\log\left(1+\frac{|\we|^2}{2}\right)\right)^{\frac{2}{\delta}}\!dx\right)\!dt\\
&\leq C\int\!\!\left(\int\re^{(2\gamma-\delta/2-1)(2/(2-\delta))}\,dx\right)^{(2-\delta)/2}\,dt.
\end{aligned}
\end{equation}
Then, \eqref{eq:mvinequality} is proved.  
\end{proof}
\section{Proof of Theorem \ref{teo:main1} and Theorem \ref{teo:main2}}\label{sec:proof}
In this section prove we give the proofs of Theorem  \ref{teo:main1} and Theorem \ref{teo:main2}. Let us start by collecting and deriving the main bounds which will be needed.\\

\subsection{Uniform Bounds} 
Let $\e<\e_f$ and let $\{(\re,\ue)\}_\e$, with $\re>0$, be a sequence of smooth solutions of \eqref{eq:aqns} with initial data $(\re^0,\ue^0)$ satisfying \eqref{eq:hyaid}. The global existence of $(\re,\ue)$ will be proved in the next section.  
By Proposition \ref{prop:energy} there exists a generic constant $C>0$ independent on $\e$ such that   
\begin{equation}\label{eq:uf1}
\begin{aligned}
\\
\sup_{t}\int\rho_\e|u_\e|^2\leq C,\quad &\;\sup_{t}\int|\he'(\re)\nabla\sqrt{\rho_\e}|^2\leq C,\\
\\
\sup_{t}\int(\rho_\e+\rho^{\g}_{\e})\leq C,\quad &\; \iint \he(\re)|D u_\e|^{2}\leq C,\\
\\
\sup_{t}\int f_{\e}(\re)\leq C,\quad&\;\iint |\tilde{p}_{\e}(\re)||\ue|^2\leq C.\\
\\
\end{aligned}
\end{equation}
where \eqref{eq:stin1} has been used. In particular, by using \eqref{eq:hrho} we have that 
\begin{equation}\label{eq:uf2}
\begin{aligned}
\sup_{t}\int|\nabla\sqrt{\rho_\e}|^2\leq C,\quad &\; \iint \rho_\e|D u_\e|^{2}\leq C.\\
\end{aligned}
\end{equation}
Then, by \eqref{eq:hyaid} and Proposition \ref{prop:bdentropy} we get that there exists a generic constants $C>0$ independent on $\e$ such that 
\begin{equation}\label{eq:uf3}
\begin{aligned}
\\
\iint \he(\re)|A u_\e|^{2}\leq C,\quad &\;\iint \he'(\re)|\nabla\re|^2\re^{\g-2}\leq C,\\
\\
\iint \he'(\re)|\nabla\re|^2f_{\e}''(\rho)\leq C,\quad &\; \iint \he(\re)|\nabla^{2}\phie(\re)|^2\leq C,\\
\\
\end{aligned}
\end{equation}
where we have used \eqref{eq:stin1} and the fact that $A\we=A\ue$. 
In particular combining \eqref{eq:uf1}, \eqref{eq:uf2}, \eqref{eq:uf3} and \eqref{eq:hrho} we have 
\begin{equation}\label{eq:uf4}
\begin{aligned}
\iint \he(\rho_\e)|\nabla u_\e|^{2}\leq C,\quad &\;\iint\rho_\e|\nabla u_\e|^{2}\leq C.
\end{aligned}
\end{equation}
Next we consider the pressure. From \eqref{eq:uf1} and \eqref{eq:uf3}, after using \eqref{eq:hrho}, we get 
\begin{equation}\label{eq:uf5bis}
\iint|\nabla\re^{\frac{\g}{2}}|^2\leq C
\end{equation}
and then by interpolation with \eqref{eq:uf1} we have that 
\begin{equation}\label{eq:uf5}
\iint\re^{\frac{5\g}{3}}\leq C\textrm{ for any $\g>1$ if $d=2$ and for any $\g\in(1,3)$ if $d=3$}.
\end{equation}
Finally, we have the following uniform bounds
\begin{equation}\label{eq:uf6}
\iint|\nabla^{2}\rrho_\eps|^2+|\nabla\rho_\eps^{\frac{1}{4}}|^4\leq C.
\end{equation}
The bounds \eqref{eq:uf6} are crucial to handle the passage to the limit in the dispersive term and are not a straightforward consequence of the  a priori estimates. Indeed in order to obtain them we need a generalization of the inequality 
\begin{equation*}
\iint|\nabla^{2}\rrho|^2+|\nabla\rho^{\frac{1}{4}}|^4\leq C \iint\rho|\nabla^2\log\rho|^2
\end{equation*}
proved in \cite{J}, see also \cite{VY2} for an alternative proof.
\begin{lemma}\label{lem:dispest}
Let $\rho>0$ and $h(\rho)$ be a smooth function such that 
\begin{equation}\label{eq:lem51h}
\begin{aligned}
&h(\rho)\geq0,&h'(\rho)>0,&\quad h'(\rho)\rho\leq Ch(\rho),&\quad |h''(\rho)|\rho\leq Ch'(\rho)
\end{aligned}
\end{equation} 
Then, the following inequality hold 
\begin{equation}\label{eq:lem51a}
\iint h'(\rho)|\nabla(h'(\rho)\nabla\rrho)|^2+\iint\frac{(h'(\rho))^3|\nabla\sqrt{\rho}|^4}{\rho}\leq C\iint h(\rho)|\nabla^2\phi(\rho)|^2\\
\end{equation}
where $\rho \phi'(\rho)=h'(\rho)$. Moreover, if in addition we assume that $h'(\rho)\geq c>0$ then
\begin{equation}\label{eq:lem51b}
\iint |\nabla^2\rrho|^2+\iint|\nabla\rho^{\frac{1}{4}}|^4\leq C\iint h(\rho)|\nabla^2\phi(\rho)|^2.
\end{equation}
\end{lemma}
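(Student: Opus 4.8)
\textbf{Proof proposal for Lemma \ref{lem:dispest}.}

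The plan is to reduce both \eqref{eq:lem51a} and \eqref{eq:lem51b} to pointwise algebraic identities relating the various second-order quantities in $\rho$, and then to absorb everything into the right-hand side using only the structural hypotheses \eqref{eq:lem51h}. First I would set $z=\phi(\rho)$, so that $\nabla z=\phi'(\rho)\nabla\rho=(h'(\rho)/\rho)\nabla\rho=2(h'(\rho)/\sqrt\rho)\nabla\sqrt\rho$, and compute $\nabla^2 z=\phi''(\rho)\nabla\rho\otimes\nabla\rho+\phi'(\rho)\nabla^2\rho$. The key point is that the integrand on the right, $h(\rho)|\nabla^2\phi(\rho)|^2$, controls $h(\rho)|\phi'(\rho)|^2|\nabla^2\rho|^2$ up to a lower-order term quadratic in $\nabla\rho$; since $h(\rho)|\phi'(\rho)|^2=h(\rho)(h'(\rho))^2/\rho^2$ and $h'(\rho)\rho\le Ch(\rho)$ gives $h(\rho)/\rho\ge c\,h'(\rho)$, one gets $h(\rho)|\phi'(\rho)|^2\ge c\,(h'(\rho))^3/\rho\ge c'\,(h'(\rho))^3/\rho$... more importantly $h'(\rho)|\nabla(h'(\rho)\nabla\sqrt\rho)|^2$ expands via the chain rule into terms of the form $(h'(\rho))^3|\nabla^2\sqrt\rho|^2$ and $(h'(\rho))(h''(\rho))^2|\nabla\sqrt\rho|^4$, the latter bounded by $(h'(\rho))^3|\nabla\sqrt\rho|^4/\rho^2\cdot\rho^2$... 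I would keep careful track that every term produced has the homogeneity matching $(h'(\rho))^3$ times a quartic-in-first-derivatives or $(h'(\rho))^3/\rho$ times the same, so that the last hypothesis $|h''(\rho)|\rho\le Ch'(\rho)$ closes the estimate.

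More concretely, the heart of the argument is the following chain of pointwise identities, to be integrated and then manipulated by integration by parts exactly as in \cite{J}: writing $a=h'(\rho)$, one has
\begin{equation*}
\nabla^2\phi(\rho)=\frac{a}{\rho}\nabla^2\rho+\Big(\phi''(\rho)\Big)\nabla\rho\otimes\nabla\rho,\qquad \phi''(\rho)=\frac{h''(\rho)\rho-h'(\rho)}{\rho^2},
\end{equation*}
and on the other hand $\nabla(a\nabla\sqrt\rho)=a\,\nabla^2\sqrt\rho+h''(\rho)\nabla\rho\otimes\nabla\sqrt\rho$, with $\nabla^2\sqrt\rho=\tfrac{1}{2\sqrt\rho}\nabla^2\rho-\tfrac{1}{4\rho^{3/2}}\nabla\rho\otimes\nabla\rho$. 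Solving the first relation for $\nabla^2\rho$ and substituting into the second expresses $a\nabla(a\nabla\sqrt\rho)$ as a linear combination of $\sqrt\rho\,\nabla^2\phi(\rho)$ and a term of the form $(\text{coeff})\,\nabla\rho\otimes\nabla\sqrt\rho$, whose coefficient is, by \eqref{eq:lem51h}, $O\big((h'(\rho))^2/\rho^{3/2}\big)$. Taking $|\cdot|^2$, multiplying by $h'(\rho)$, and integrating, the main term gives $\iint h'(\rho)\rho|\nabla^2\phi(\rho)|^2\le C\iint h(\rho)|\nabla^2\phi(\rho)|^2$ (using $h'(\rho)\rho\le Ch(\rho)$), while the cross term and the quartic remainder $\iint (h'(\rho))^3|\nabla\sqrt\rho|^4/\rho$ must be absorbed. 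For that absorption I would integrate by parts the quartic term, moving one derivative off $|\nabla\sqrt\rho|^2$, producing $\iint(\cdots)\nabla^2\sqrt\rho:(\nabla\sqrt\rho\otimes\nabla\sqrt\rho)$ plus harmless lower-order pieces, then apply Cauchy--Schwarz to split it as $\theta\iint h'(\rho)|\nabla(h'(\rho)\nabla\sqrt\rho)|^2+C_\theta\iint(h'(\rho))^3|\nabla\sqrt\rho|^4/\rho$ with $\theta$ small — but this is circular unless I can gain a genuinely small constant. The standard device from \cite{J}/\cite{VY2} is that the integration by parts produces the quartic term with a definite sign times a dimensional constant, so for $d=2,3$ the coefficient is such that a fixed fraction can be moved to the left; I would reproduce that computation with the extra weights $h'(\rho)$ carried along, checking the weights are differentiated consistently (each $\nabla$ hitting $h'(\rho)$ yields $h''(\rho)\nabla\rho$, again controlled by $|h''(\rho)|\rho\le Ch'(\rho)$).

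Once \eqref{eq:lem51a} is established, \eqref{eq:lem51b} is immediate: under the extra assumption $h'(\rho)\ge c>0$ we have $1\le c^{-1}h'(\rho)$ pointwise, hence $|\nabla^2\sqrt\rho|^2\le c^{-3}(h'(\rho))^3|\nabla^2\sqrt\rho|^2$ and, modulo the identity $\nabla(h'(\rho)\nabla\sqrt\rho)=h'(\rho)\nabla^2\sqrt\rho+h''(\rho)\nabla\rho\otimes\nabla\sqrt\rho$ with the last term absorbed as above, $\iint|\nabla^2\sqrt\rho|^2\le C\iint h'(\rho)|\nabla(h'(\rho)\nabla\sqrt\rho)|^2$; similarly $|\nabla\rho^{1/4}|^4=c\,|\nabla\sqrt\rho|^4/\rho\le C(h'(\rho))^3|\nabla\sqrt\rho|^4/\rho$. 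Both right-hand sides are bounded by $\iint h(\rho)|\nabla^2\phi(\rho)|^2$ by \eqref{eq:lem51a}. The main obstacle is the absorption step for the quartic gradient term: getting the dimensional constant to work out requires care, and it is precisely the place where the restriction to $d=2,3$ (and, downstream, the relation between $\nu$ and $\kappa$) enters; everything else is bookkeeping with the four structural inequalities in \eqref{eq:lem51h}.
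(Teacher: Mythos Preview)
Your overall plan is right, and the derivation of \eqref{eq:lem51b} from \eqref{eq:lem51a} is essentially what the paper does. The gap is exactly where you flag it: the absorption of the quartic term. You do not carry out the integration by parts, and your belief that a dimensional constant is needed (and that this is where the restriction to $d=2,3$ enters) is mistaken. The paper's argument is dimension-free; the restrictions on $d$, $\gamma$, $\nu$, $\kappa$ in Theorems \ref{teo:main1}--\ref{teo:main2} come from entirely different places.

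Here is what you are missing. Rather than routing through $\nabla^2\rho$, use directly that $\nabla\phi(\rho)=2h'(\rho)\nabla\sqrt\rho/\sqrt\rho$, so that $\tfrac12\nabla^2\phi(\rho)=\nabla\big(h'(\rho)\nabla\sqrt\rho/\sqrt\rho\big)$. The quotient rule gives
\[
\tfrac12\nabla^2\phi(\rho)=\frac{\nabla(h'(\rho)\nabla\sqrt\rho)}{\sqrt\rho}-\frac{h'(\rho)\nabla\sqrt\rho\otimes\nabla\sqrt\rho}{\rho}.
\]
Square, multiply by $\rho h'(\rho)$, integrate, and use $h'(\rho)\rho\le Ch(\rho)$: the left side of \eqref{eq:lem51a} appears, bounded by $C\iint h(\rho)|\nabla^2\phi(\rho)|^2$ plus the cross term
\[
2I:=2\iint \frac{(h'(\rho))^2}{\sqrt\rho}\,\partial_i\big(h'(\rho)\partial_j\sqrt\rho\big)\,\partial_i\sqrt\rho\,\partial_j\sqrt\rho.
\]
Integrate by parts in $i$: one of the two resulting terms is $-I$ again, so $2I=-J$ with
\[
J=\iint (h'(\rho))^2|\nabla\sqrt\rho|^2\,\diver\!\Big(\frac{h'(\rho)\nabla\sqrt\rho}{\sqrt\rho}\Big).
\]
Now factor $J$ as $\sqrt{\rho h'(\rho)}\,\diver(\cdot)$ times $(h'(\rho))^{3/2}|\nabla\sqrt\rho|^2/\sqrt\rho$ and apply Young's inequality:
\[
|J|\le C\iint \rho h'(\rho)\,\Big|\nabla\Big(\frac{h'(\rho)\nabla\sqrt\rho}{\sqrt\rho}\Big)\Big|^2+\tfrac12\iint\frac{(h'(\rho))^3|\nabla\sqrt\rho|^4}{\rho}.
\]
The first integral is $\le C\iint h(\rho)|\nabla^2\phi(\rho)|^2$ by the starting identity, and the coefficient $\tfrac12$ on the quartic piece is what lets you absorb it on the left. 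No dimensional constant, no small-parameter tuning; the $\tfrac12$ is free from Young. Your ``circularity'' worry was legitimate, but the resolution is this algebraic cancellation $2I=-J$ followed by a single Young split, not a dimensional trick.
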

\begin{proof}
By using \eqref{eq:lem51h} we have that 
\begin{equation}\label{eq:sd2}
\iint \rho h'(\rho)\left|\nabla\left(\frac{h'(\rho)\nabla\sqrt{\rho}}{\sqrt{\rho}}\right)\right|^2=\iint \rho h'(\rho)|\nabla^2\phi(\rho)|^2\leq C \iint h(\rho)|\nabla^2\phi(\rho)|^2,
\end{equation}
where $h'(\rho)=\rho\phi'(\rho)$ has been used. 
Then, by using the chain rule we have
\begin{equation}\label{eq:sd4}
\nabla\left(\frac{h'(\rho)\nabla\sqrt{\rho}}{\sqrt{\rho}}\right)=\frac{\nabla(h'(\rho)\nabla\sqrt{\rho})}{\sqrt{\rho}}-\frac{h'(\rho)\nabla\sqrt{\rho}\otimes\nabla\sqrt{\rho}}{\rho}.
\end{equation}
Taking the square we get 
\begin{equation}\label{eq:sd5}
\begin{aligned}
\left|\nabla\left(\frac{h'(\rho)\nabla\sqrt{\rho}}{\sqrt{\rho}}\right)\right|^2&=\frac{|\nabla(h'(\rho)\nabla\sqrt{\rho})|^2}{\rho}\\
&-2\frac{\nabla(h'(\rho)\nabla\sqrt{\rho})h'(\rho)\nabla\sqrt{\rho}\otimes\nabla\sqrt{\rho}}{\sqrt{\rho}\rho}\\
&+\frac{(h'(\rho))^2|\nabla\sqrt{\rho}|^4}{\rho^2}.
\end{aligned}
\end{equation}
Then, by using \eqref{eq:sd2}
\begin{equation}
\label{eq:sd6}
\begin{aligned}
&\iint h'(\rho)|\nabla(h'(\rho)\nabla\sqrt{\rho})|^2+\frac{(h'(\rho))^3|\nabla\sqrt{\rho}|^4}{\rho}\leq C\iint h(\rho)|\nabla^2\phi(\rho)|^2\\ 
&+2\iint\rho h'(\rho)\frac{\nabla(h'(\rho)\nabla\sqrt{\rho})h'(\rho)\nabla\sqrt{\rho}\otimes\nabla\sqrt{\rho}}{\sqrt{\rho}\rho}
\end{aligned}
\end{equation}
Now we focus on the last term of \eqref{eq:sd6}. By integrating by parts we get
\begin{equation*}
\begin{aligned}
&\iint h'(\rho)\frac{\nabla(h'(\rho)\nabla\sqrt{\rho})h'(\rho)\nabla\sqrt{\rho}\otimes\nabla\sqrt{\rho}}{\sqrt{\rho}}=\\
&\iint\frac{\partial_i (h'(\rho)\partial_j\sqrt{\rho})h'(\rho)\partial_i\sqrt{\rho}h'(\rho)\partial_j\sqrt{\rho}}{\sqrt{\rho}}=\\
&-\iint\frac{\partial_i (h'(\rho)\partial_j\sqrt{\rho})h'(\rho)\partial_i\sqrt{\rho}h'(\rho)\partial_j\sqrt{\rho}}{\sqrt{\rho}}\\
&-\iint h'(\rho)\partial_j\sqrt{\rho}\partial_i\left(\frac{h'(\rho)\partial_i\sqrt{\rho}}{\sqrt{\rho}}\right)h'(\rho)\partial_j\sqrt{\rho}.
\end{aligned}
\end{equation*}
Then, 
\begin{equation}\label{eq:sd8}
\begin{aligned}
&2\iint h'(\rho)\frac{\nabla(h'(\rho)\nabla\sqrt{\rho})h'(\rho)\nabla\sqrt{\rho}\otimes\nabla\sqrt{\rho}}{\sqrt{\rho}}=\\
&-\iint h'(\rho)\nabla\sqrt{\rho}\dive\left(\frac{h'(\rho)\nabla\sqrt{\rho}}{\sqrt{\rho}}\right)h'(\rho)\nabla\sqrt{\rho}=\\
&-\iint \sqrt{\rho}\sqrt{h'(\rho)}\frac{\sqrt{h'(\rho)}}{\sqrt{\rho}}\nabla\sqrt{\rho}\dive\left(\frac{h'(\rho)\nabla\sqrt{\rho}}{\sqrt{\rho}}\right)h'(\rho)\nabla\sqrt{\rho}\leq\\
&C \iint \rho h'(\rho)\left|\nabla\left(\frac{h'(\rho)\nabla\sqrt{\rho}}{\sqrt{\rho}}\right)\right|^2+\frac{1}{2}\iint\frac{(h'(\rho))^3|\nabla\sqrt{\rho}|^4}{\rho}.
\end{aligned}
\end{equation}
Then, by \eqref{eq:sd6} we get 
\begin{equation}\label{eq:sd9}
\iint h'(\rho)|\nabla(h'(\rho)\nabla\sqrt{\rho})|^2+\iint\frac{(h'(\rho))^3|\nabla\sqrt{\rho}|^4}{\rho}\leq C\iint h(\rho)|\nabla^2\phi(\rho)|^2.
\end{equation}
Now we prove the second part of the Lemma. Assuming that $h'(\rho)>c$ it is straightforward to prove that 
\begin{equation*}
\iint|\nabla\rho^{\frac{1}{4}}|^4\leq C\iint h(\rho)|\nabla^2\phi(\rho)|^2.
\end{equation*}
By using the chian rule we have

\begin{equation}\label{eq:sd10}
\begin{aligned}
\iint h'(\rho)|\nabla(h'(\rho)\nabla\sqrt{\rho})|^2&=\iint h'(\rho)|h'(\rho)\nabla^{2}\sqrt{\rho}+2h''(\rho)\sqrt{\rho}\nabla\sqrt{\rho}\otimes\nabla\sqrt{\rho}|^2\\
                                                                        &=\iint h'(\rho)|h'(\rho)\nabla^{2}\sqrt{\rho}|^2\\
                                                                        &+4\iint h'(\rho)\frac{h'(\rho)\nabla^{2}\sqrt{\rho}h''(\rho)\rho\nabla\sqrt{\rho}\otimes\nabla\sqrt{\rho}}{\sqrt{\rho}}\\
                                                                        &+4\iint h'(\rho)|h''(\rho)\sqrt{\rho}\nabla\sqrt{\rho}\otimes\nabla\sqrt{\rho}|^2.
\end{aligned}
\end{equation}
By using the fact that $|h''(\rho)\rho|\leq C h'(\rho)$ we get 
\begin{equation}\label{eq:sd10}
\begin{aligned}
\iint h'(\rho)|h'(\rho)\nabla^{2}\sqrt{\rho}|^2&\leq C\iint h'(\rho)|\nabla(h'(\rho)\nabla\sqrt{\rho})|^2\\
                                                                  &+4\left|\iint\iint h'(\rho)\frac{h'(\rho)\nabla^{2}\sqrt{\rho}h''(\rho)\rho\nabla\sqrt{\rho}\otimes\nabla\sqrt{\rho}}{\sqrt{\rho}}\right|\\
                                                                  &\leq C\iint h'(\rho)|\nabla(h'(\rho)\nabla\sqrt{\rho})|^2+C\iint\frac{(h'(\rho))^3|\nabla\sqrt{\rho}|^4}{\rho}\\
                                                                  &+\frac{1}{2}\iint  h'(\rho)|h'(\rho)\nabla^{2}\sqrt{\rho}|^2.
\end{aligned}
\end{equation}
Then, by using \eqref{eq:sd9} we get 
\begin{equation}\label{eq:sd11}
\iint h'(\rho)|h'(\rho)\nabla^{2}\sqrt{\rho}|^2\leq C\iint h(\rho)|\nabla^2\phi(\rho)|^2.
\end{equation}
Then, since $h'(\rho)>c$ we have that 
\begin{equation}\label{eq:sd12}
\iint |\nabla^{2}\sqrt{\rho}|^2\leq C\iint h(\rho)|\nabla^2\phi(\rho)|^2.
\end{equation}
Summing up \eqref{eq:sd11} and \eqref{eq:sd12} we get \eqref{eq:lem51b}.
\end{proof}
\bigskip
\subsection{Preliminary Lemma}
In the following lemma we prove the main convergences needed in the proof of the main Theorems. 
\begin{lemma}\label{lem:sr}
Let $\{(\rn, \un)\}_\e$ be a sequence of solutions of \eqref{eq:aqns}. Then up to subsequences there exists a function $\rrho$ such that
\begin{align}
&\rrn\rightarrow\rrho\textrm{ strongly in }L^{2}(0,T; H^{1}(\T)).\label{eq:strong1}
\end{align}
\end{lemma}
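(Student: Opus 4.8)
The plan is to deduce \eqref{eq:strong1} from the Aubin--Lions--Simon compactness lemma, using $H^{2}(\T)$ as the ``good'' space in which $\{\rrn\}_\e$ is bounded and $H^{-1}(\T)$ as the space controlling the time derivative. So first I would check that $\{\rrn\}_\e$ is bounded in $L^{2}(0,T;H^{2}(\T))$: from the energy estimate (see \eqref{eq:uf1}, \eqref{eq:uf2}) we have $\sup_{t}\int\rn\le C$ and $\sup_{t}\int|\nabla\rrn|^{2}\le C$, so $\{\rrn\}_\e$ is bounded in $L^{\infty}(0,T;H^{1}(\T))$, and together with $\iint|\nabla^{2}\rrn|^{2}\le C$ from \eqref{eq:uf6} this gives the claimed bound in $L^{2}(0,T;H^{2}(\T))$.

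Next I would estimate $\partial_t\rrn$. Since $(\rn,\un)$ is a smooth solution with $\rn>0$, the continuity equation in \eqref{eq:aqns} and the chain rule give the pointwise identity
\begin{equation*}
\partial_t\rrn=-\dive(\rrn\,\un)+\tfrac12\,\rrn\,\dive\un .
\end{equation*}
The first term on the right-hand side is bounded in $L^{\infty}(0,T;H^{-1}(\T))$, because $\rrn\un$ is bounded in $L^{\infty}(0,T;L^{2}(\T))$ by the energy estimate \eqref{eq:uf1} and $\dive$ maps $L^{2}(\T)$ continuously into $H^{-1}(\T)$; the second term is bounded in $L^{2}(0,T;L^{2}(\T))$, because $\iint\rn|\nabla\un|^{2}\le C$ by \eqref{eq:uf4}. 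Hence $\{\partial_t\rrn\}_\e$ is bounded in $L^{2}(0,T;H^{-1}(\T))$.

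Finally, since $H^{2}(\T)\hookrightarrow\hookrightarrow H^{1}(\T)\hookrightarrow H^{-1}(\T)$, the Aubin--Lions--Simon lemma implies that $\{\rrn\}_\e$ is relatively compact in $L^{2}(0,T;H^{1}(\T))$, and extracting a subsequence produces a limit, which we call $\rrho$; this is \eqref{eq:strong1}. The step I expect to be most delicate, conceptually, is the estimate on $\partial_t\rrn$: one must use the square-root form of the continuity equation and never divide $\partial_t\rn$ by $\rrn$ directly, since the limit density may vanish, and one has to recognize that the term $\rrn\,\dive\un$ is controlled not by the basic energy but by the degenerate-viscosity bound \eqref{eq:uf4}; similarly, the $L^{2}(0,T;H^{2}(\T))$ bound is genuinely non-elementary and relies on \eqref{eq:uf6} (equivalently, on Lemma \ref{lem:dispest}), which is precisely why this lemma is placed after that one.
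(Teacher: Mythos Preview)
Your proof is correct and follows essentially the same strategy as the paper: derive the square-root form of the continuity equation, bound $\partial_t\rrn$ in $L^{2}(0,T;H^{-1}(\T))$ via \eqref{eq:uf1}--\eqref{eq:uf4}, combine with the $L^{2}(0,T;H^{2}(\T))$ bound coming from \eqref{eq:uf6}, and apply Aubin--Lions. Your two-term decomposition $\partial_t\rrn=-\dive(\rrn\un)+\tfrac12\rrn\dive\un$ is in fact cleaner than the three-term identity the paper writes, but the ingredients and the conclusion are the same.
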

\begin{proof}
Let us consider the first equation in \eqref{eq:aqns}. Since, by Proposition \ref{prop:DG} $\rho_\eps>0$ we have that
\begin{equation}\label{eq:51}
\partial_t\rrn=-\frac{\rrn}{2}\dive\un-\dive(\rrn\un)+\nabla\un\rrn,
\end{equation}
and, by the uniform bounds we have in \eqref{eq:uf2} and \eqref{eq:uf4}, we have that
\begin{equation*}
\{\partial_t\rrn\}_\e\textrm{ is uniformly bounded in }L^{2}(0,T;H^{-1}(\T)).
\end{equation*} 
Then, since $\{\rrn\}_\e$ is uniformly bounded in $L^{2}(0,T;H^{2}(\T))$ by using Aubin-Lions Lemma we get \eqref{eq:strong1}. 
\end{proof}
\begin{lemma}\label{lem:srr}
Let $\{(\rn, \un)\}_\e$ be a sequence of solutions of \eqref{eq:aqns}. Then
\begin{align}
&\he'(\re)\rrn\rightarrow\rrho\textrm{ strongly in }L^{2}((0,T)\times\T)),\label{eq:strong1bis}\\
&\he'(\rn)\nabla\rrn\rightarrow\nabla\rrho\textrm{ strongly in }L^{2}((0,T)\times\T),\label{eq:strongmain}\\
&\he''(\rn)\rn\nabla\rrn\rightarrow 0\textrm{ strongly in }L^{2}((0,T)\times\T).\label{eq:strongmainbis}
\end{align}
\end{lemma}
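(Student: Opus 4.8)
The plan is to regard all three statements as quantitative expressions of the fact that the multiplicative error $\he'(\re)-1=\tfrac78\e\re^{-1/8}+\e\g\re^{\g-1}$, and similarly $\he''(\re)\re=-\tfrac{7}{64}\e\re^{-1/8}+\e\g(\g-1)\re^{\g-1}$, becomes negligible in the relevant weighted $L^{2}$ norms, the prefactor $\e$ being the gain; the actual limits are then furnished by Lemma \ref{lem:sr}. For \eqref{eq:strong1bis} I would decompose $\he'(\re)\rrn-\rrho=(\rrn-\rrho)+(\he'(\re)-1)\rrn$, the first term tending to $0$ in $L^{2}((0,T)\times\T)$ by \eqref{eq:strong1}. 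For the second term one has $(\he'(\re)-1)^{2}\re\le C\e^{2}(\re^{3/4}+\re^{2\g-1})$, whose space-time integral is $O(\e^{2})$: the bound $\sup_{t}\int(\re+\re^{\g})\le C$ from \eqref{eq:uf1} controls $\iint\re^{3/4}$, while $\iint\re^{2\g-1}\le C$ follows from \eqref{eq:uf5} whenever $2\g-1\le 5\g/3$ (that is $\g\le 3$, in particular in the three-dimensional case), and otherwise --- in $d=2$ with $\g>3$ --- from the Ladyzhenskaya-type bound $\iint\re^{2\g}\le C$, itself a consequence of $\re^{\g/2}\in L^{\infty}_{t}L^{2}_{x}\cap L^{2}_{t}H^{1}_{x}$, see \eqref{eq:uf1} and \eqref{eq:uf5bis}.

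For \eqref{eq:strongmain} I would again split $\he'(\re)\nabla\rrn-\nabla\rrho=(\nabla\rrn-\nabla\rrho)+(\he'(\re)-1)\nabla\rrn$ and dispose of the first term via \eqref{eq:strong1}. The term $(\he'(\re)-1)\nabla\rrn$ is the crux: since $\re$ has no uniform lower bound, the singular factor $\re^{-1/8}$ has to be absorbed through the structural inequalities $\he'(\re)\ge\tfrac78\e\re^{-1/8}$ and $\he'(\re)\ge\e\g\re^{\g-1}$ together with the degenerate fourth-order estimate of Lemma \ref{lem:dispest}. Applying \eqref{eq:lem51a} with $h=\he$ (legitimate by \eqref{eq:stin1}, which is \eqref{eq:lem51h}) and \eqref{eq:uf3} yields $\iint(\he'(\re))^{3}\re^{-1}|\nabla\rrn|^{4}\le C$, so that Cauchy-Schwarz gives
\[
\iint(\he'(\re)-1)^{2}|\nabla\rrn|^{2}\le\Big(\iint\frac{(\he'(\re)-1)^{4}\re}{(\he'(\re))^{3}}\Big)^{\!1/2}\Big(\iint\frac{(\he'(\re))^{3}|\nabla\rrn|^{4}}{\re}\Big)^{\!1/2}.
\]
The second factor is $O(1)$, and the two lower bounds on $\he'(\re)$ give the pointwise inequality $(\he'(\re)-1)^{4}\re/(\he'(\re))^{3}\le C\e(\re^{7/8}+\re^{\g})$, so that by $\sup_{t}\int(\re+\re^{\g})\le C$ the first factor is $O(\e^{1/2})$ and the whole quantity goes to $0$. (Alternatively, the $\e\g\re^{\g-1}$ piece can be handled directly by $\iint\e^{2}\re^{2\g-2}|\nabla\rrn|^{2}=\tfrac14\iint\e^{2}\re^{2\g-3}|\nabla\re|^{2}\le\tfrac1\g\iint\e\,\he'(\re)\re^{\g-2}|\nabla\re|^{2}\le C\e$ using \eqref{eq:uf3}.)

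Finally, \eqref{eq:strongmainbis} is a corollary of \eqref{eq:strongmain}: since $|\he''(\re)\re|\le C(\g)\,(\he'(\re)-1)$ pointwise, $\iint|\he''(\re)\re|^{2}|\nabla\rrn|^{2}\le C\iint(\he'(\re)-1)^{2}|\nabla\rrn|^{2}\to 0$ by what has just been proved. The main obstacle, as indicated, is precisely the singular term $\e\re^{-1/8}$ in $\he'(\re)$: the absence of a uniform positive lower bound for $\re$ rules out a naive estimate and forces one to exploit the additional integrability of $\nabla\rrn$ produced by the regularized capillarity tensor via Lemma \ref{lem:dispest}, which is one of the reasons that regularization was chosen compatibly with $\he$.
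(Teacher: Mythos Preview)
Your proof is correct and follows essentially the same approach as the paper: the same splitting $(\he'(\re)-1)\cdot(\text{term})+(\text{term}-\text{limit})$, the same a priori bounds \eqref{eq:uf1}, \eqref{eq:uf3}, \eqref{eq:uf5}, \eqref{eq:uf5bis}, and the same reliance on Lemma~\ref{lem:dispest} to control the singular factor $\e\re^{-1/8}$ in \eqref{eq:strongmain}. The only cosmetic difference is that for the error term in \eqref{eq:strongmain} the paper treats the $\re^{-1/8}$ and $\re^{\g-1}$ pieces separately --- the first via H\"older against $\|\nabla\re^{1/4}\|_{L^4}$ from \eqref{eq:uf6}, the second via \eqref{eq:uf3} exactly as in your parenthetical alternative --- whereas you bundle both into a single Cauchy--Schwarz step against the weighted fourth-order bound \eqref{eq:lem51a}; the underlying estimates are the same.
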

\begin{proof}
Let us start by proving \eqref{eq:strong1bis}. By using \eqref{eq:hrho} we get 
\begin{equation*}
\begin{aligned}
\iint|\he'(\re)\rrn-\rre|^2&\leq \iint|\rre-\rrho|^2\\
                                 &+\e^2\iint\re^{\frac{3}{4}}\\
                                 &+\e^2\iint\re^{2\g-1}.
\end{aligned}
\end{equation*}
The first term goes to zero because of \eqref{eq:strong1}. The second term, simply by using H\"older inequality and the uniform bound \eqref{eq:uf1}. Finally, for the last term we have that when $d=2$ there for any $\g>1$ fixed there exists $\delta=\delta(\g)$ small enough such that $2\g-1<(2-\delta)\g$ and them the integral is bounded because $\re^{\g}\in L^{r}((0,T)\times\T)$ for any $r<2$. When $d=3$ since $\g\in(1,3)$ 
it holds that $2\g-1<\frac{5}{3}\g$. Then, the third term goes to zero by using H\"older inequality and 
\eqref{eq:uf5}.\par
To prove \eqref{eq:strongmain} we have
\begin{equation*}
\begin{aligned}
\iint|\he'(\re)\nabla\rre-\nabla\rrho|^2&\leq \iint|\nabla\rre-\nabla\rrho|^2\\
                                                         &+C\e^2\iint\re^{-\frac{1}{4}}|\nabla\rre|^2\\
                                                         &+C\e^2\iint\re^{2\g-2}|\nabla\rre|^2.
\end{aligned}
\end{equation*}
Then, the first term goes to $0$ because of \eqref{eq:strong1}. Concerning the second term we have 
\begin{equation*}
\begin{aligned}
\e^2\iint\re^{-\frac{1}{4}}|\nabla\rre|^2&\leq \e^{2}\iint\re^{\frac{1}{4}}|\nabla\re^{\frac{1}{4}}|^2.
\end{aligned}
\end{equation*}
Then, by H\"older inequality 
\begin{equation*}
\e^2\iint\re^{-\frac{1}{4}}|\nabla\rre|^2\leq C\e^{2}\left(\iint\rre\right)^{\frac{1}{2}}\left(\iint|\nabla\re^{\frac{1}{4}}|^4\right)^{\frac{1}{2}}\leq C\e^{2}.
\end{equation*}
Now, we treat the last term. From \eqref{eq:uf3} we have that
\begin{equation*}
\iint|\nabla\re|^2\he'(\re)\re^{\g-2}\leq C.
\end{equation*}
Then, by using \eqref{eq:hrho} we get 
\begin{equation*}
\e\iint|\nabla\re|^2\re^{2\g-3}=4\e\iint|\nabla\rre|^2\re^{2\g-2}\leq C,
\end{equation*}
which implies the convergence of the last term. Finally concerning \eqref{eq:strongmainbis}, by using again \eqref{eq:hrho}, we have 
\begin{equation*}
\begin{aligned}
\iint|\he''(\re)\re\nabla\rre-\nabla\rrho|^2&\leq C\e^2\iint\re^{-\frac{1}{4}}|\nabla\rre|^2\\
                                                         &+C\e^2\iint\re^{2\g-2}|\nabla\rre|^2.
\end{aligned}
\end{equation*}
Then, it goes to zero arguing as above.
\end{proof}
\begin{lemma}\label{lem:shg}
Let $\{(\rn, \un)\}_\e$ be a sequence of solutions of \eqref{eq:aqns} then
\begin{align}
&\he(\re)-\re\rightarrow 0\textrm{ in }L^{1}((0,T)\times\T),\label{eq:convh}\\
&\gie(\re)\rightarrow 0\textrm{ in }\textrm{ in }L^{1}((0,T)\times\T).\label{eq:convg}
\end{align}
\end{lemma}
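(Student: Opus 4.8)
The plan is to reduce both statements to the single fact that $\re^{7/8}$ and $\re^{\g}$ are bounded in $L^{1}((0,T)\times\T)$ uniformly in $\e$, so that the explicit prefactor $\e$ in the definition \eqref{eq:hrho} of $\he(\re)$ forces the relevant quantities to vanish as $\e\to0$.

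First I would use \eqref{eq:hrho} to write $\he(\re)-\re=\e\re^{7/8}+\e\re^{\g}$. For the first summand, since $7/8<1$, H\"older's inequality on $\T$ gives $\int\re^{7/8}\le C\big(\int\re\big)^{7/8}$, which is uniformly bounded in $t$ and $\e$ by the mass bound in \eqref{eq:uf1}; integrating in time yields $\iint\re^{7/8}\le C$. For the second summand, the bound $\sup_t\int\re^{\g}\le C$ from \eqref{eq:uf1} (or, with room to spare, the stronger \eqref{eq:uf5}) gives $\iint\re^{\g}\le C$. Therefore $\iint|\he(\re)-\re|\le\e\iint(\re^{7/8}+\re^{\g})\le C\e\to0$, which proves \eqref{eq:convh}.

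For \eqref{eq:convg} the key is a short computation: from \eqref{eq:hrho} and \eqref{eq:grho}, evaluating $\re\he'(\re)-\he(\re)$ term by term (each power $\re^{a}$ contributing $(a-1)\re^{a}$), one gets $\gie(\re)=-\frac18\e\re^{7/8}+(\g-1)\e\re^{\g}$, hence $|\gie(\re)|\le\e\re^{7/8}+(\g-1)\e\re^{\g}$. The same two bounds used above then yield $\iint|\gie(\re)|\le C\e\to0$. (Alternatively one may combine $\gie(\re)=\re\he'(\re)-\he(\re)$ with the structural inequality $|\gie(\re)|\le(\g-1)\he(\re)$ from \eqref{eq:stin1}, but the explicit expression is the most direct.)

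I do not expect any real obstacle here: the statement is a routine consequence of the a priori bounds already in hand, the only mild point being that the sublinear power $\re^{7/8}$ must be controlled via H\"older/Jensen rather than by pointwise domination by $\re$, since $\re$ need not be bounded below by $1$; everything else is immediate from \eqref{eq:uf1} and \eqref{eq:uf5}.
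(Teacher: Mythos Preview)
Your proposal is correct and follows essentially the same route as the paper: both reduce the claim to the bounds $\iint\re^{7/8}\le C$ (via H\"older and the mass bound) and $\iint\re^{\g}\le C$ (from the energy estimate \eqref{eq:uf1}), so that the explicit prefactor $\e$ in \eqref{eq:hrho} and \eqref{eq:grho} forces the $L^1$ norms to vanish. Your explicit computation of $\gie(\re)$ is slightly more detailed than the paper's presentation, but the argument is the same.
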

\begin{proof}
By \eqref{eq:hrho} and \eqref{eq:grho} we have that 
\begin{equation*}
\begin{aligned}
\iint|\he(\re)-\re|&\leq C\e\iint\re^{\frac{7}{8}}+C\e\iint\re^{\g},\\
\iint|\gie(\re)|&\leq  C\e\iint\re^{\frac{7}{8}}+C\e\iint\re^{\g}.
\end{aligned}
\end{equation*}
Then, we conclude by using H\"older inequality and \eqref{eq:uf1}. 
\begin{lemma}\label{lem:sp}
Let $\{(\rn, \un)\}_\e$ be a sequence of solutions of \eqref{eq:aqns} then
\begin{equation}
\begin{aligned}
&\re^{\g}\rightarrow\rho^{\g}\textrm{ in }L^{1}((0,T)\times\T),\\
&p_{\e}(\re)\rightarrow 0\textrm{ in }L^{1}((0,T)\times\T),\\
&\tilde{p}_{\e}(\re)\rightarrow 0\textrm{ in }L^{1}((0,T)\times\T).
\end{aligned}
\end{equation}
\end{lemma}
\begin{proof}
The convergence of $\re^{\g}$ follows from \eqref{eq:strong1} and the bound \eqref{eq:uf5bis}.
By the definition of $p_{\e}$ we have that there exists a generic constant $C$ independent on $\e$ such that 
\begin{equation}
\iint|p_{\e}(\re)|\leq C\sum_{i=1}^6\iint|p^{i}_{\e}(\re)|.
\end{equation}
Let us recall from \eqref{eq:uf1}
\begin{equation}\label{eq:ppre1}
\sup_{t}\e^5\lambda(\e)\left(\int\re^{\frac{1}{\e^2}+\g-1}+\re^{-\frac{1}{\e^2}-\frac{1}{8}}\right)\leq C.
\end{equation}
We start by estimating $p_{\e}^{1}(\re)$, by \eqref{eq:defp} and H\"older inequality we have
\begin{equation}
\begin{aligned}
\iint|p^{1}_{\e}(\re)|\leq C\e^{2}\lambda(\e)\iint\re^{\frac{1}{\e^2}}&
\leq C\e^{2}\lambda(\e)\left(\iint\re^{\frac{1}{\e^2}+\g-1}\right)^{\frac{1}{1+\e^2(\g-1)}}.
\end{aligned}
\end{equation}
Then, by using that $\lambda(\e)=e^{-1/\e^4}$ we get
\begin{equation*}
\int p_{\e}^{1}(\re)\leq 
C\frac{\e^2}{\e^{5/(1+\e^2(\g-1))}}e^{-\left(\frac{(\g-1)}{\e^2(1+\e^2(\g-1))}\right)}\left(\e^5\lambda(\e)\sup_t\int\re^{\frac{1}{\e^2}+\g-1}\right)^{\frac{1}{1+\e^2(\g-1)}}
\end{equation*}
and then by using \eqref{eq:ppre1} we get the convergence to $0$ of $p_{\e}^{1}(\re)$. 
The term $p_{\e}^{2}(\re)$ is treated at the same way. 
Now, we deal with convergence of the term $p^{3}_{\e}(\re)$. 
First of all, we have that 
\begin{equation*}
\iint|p^{3}_{\e}(\re)|\leq C \e^{3}\lambda(\e)\iint \re^{\frac{1}{\e^2}+\g-1}.
\end{equation*}
Then, we recall from \eqref{eq:uf3} the following uniform bound
\begin{equation}\label{eq:pre1}
\iint|\nabla\re|^2\he'(\re)f_{\e}''(\re)\leq C 
\end{equation}
which contains the following uniform bound
\begin{equation*}
\e^{10}\lambda(\e)\iint|\nabla\re|^2\re^{\frac{1}{\e^2}+2\g-4}\leq C
\end{equation*}
which means that 
\begin{equation*}
\e^{14}\lambda(\e)\iint\left|\nabla\left(\re^{\frac{1}{2\e^2}+\g-1}\right)\right|^2\leq C.
\end{equation*}
Then, by Sobolev embedding
\begin{equation*}
\e^{14}\lambda(\e)\int\left(\int\re^{\frac{3}{\e^2}+6\g-6}\right)^{\frac{1}{3}}\,dt\leq C.
\end{equation*}
Now, by H\"older inequality we get 
\begin{equation*}
\begin{aligned}
\iint|p_{\e}^{3}(\re)|&\leq C\e^{3}\lambda(\e)\left(\int\left(\int\re^{\frac{3}{\e^2}+6\g-6}\right)^{\frac{1}{3}}\,dt\right)^{\frac{1+\e^2(\g-1)}{1+2\e^2(\g-1)}}\\
&\leq C\frac{\e^3}{\e^{\frac{14(1+\e^2(\g-1))}{1+2\e^2(\g-1)}}}e^{-\frac{(\g-1)}{\e^2(1+2\e^2(\g-1))}}
\left(\e^{14}\lambda(\e)\int\left(\int \re^{\frac{3}{\e^2}+6\g-6}\right)^{\frac{1}{3}}\,dt\right)^{\frac{1+\e^2(\g-1)}{1+2\e^2(\g-1)}}.
\end{aligned}
\end{equation*}
Then, we have that $p^{3}_{\e}(\re)$ vanishes as $\e$ goes to $0$. 
Let us consider the term $p^{4}_{\e}(\re)$. We have 
\begin{equation*}
\begin{aligned}
\iint|p^{4}_{\e}(\re)|\leq C\e^{3}\lambda(\e)\iint\re^{-\frac{1}{\e^2}}&
\leq C\e^{3}\lambda(\e)\left(\iint\re^{-\frac{1}{\e^2}-\frac{1}{8}}\right)^{\frac{8}{8+\e^2}}\\
&\leq C \frac{\e^3}{\e^\frac{40}{8+\e^2}}e^{-\frac{1}{\e^2(8+\e^2)}}\left(\e^5\lambda(\e)\sup_t\int\re^{-\frac{1}{\e^2}-\frac{1}{8}}\right)^{\frac{8}{8+\e^2}}.
\end{aligned}
\end{equation*}
Then we get that $p_{\e}^{4}(\re)$ goes to $0$.
Now, we consider the term $p_{\e}^{5}(\re)$. By \eqref{eq:defp} and H\"older inequality
\begin{equation}
\iint|p_{\e}^{5}(\re)|\leq C \e^{3}\lambda(\e)\iint\re^{-\frac{1}{\e^2}-\frac{1}{8}}.
\end{equation}
In the bound \eqref{eq:pre1} is contained the following bound
\begin{equation*}
\e^{10}\lambda(\e)\iint\re^{-\frac{1}{8}-1}|\nabla\re|^2\re^{-\frac{1}{\e^2}-\frac{1}{8}-1}
\leq C
\end{equation*}
which means that 
\begin{equation*}
\e^{14}\lambda(\e)\iint\left|\nabla\left(\re^{-\frac{1}{2\e^2}-\frac{1}{8}}\right)\right|^2\leq C
\end{equation*}
which by Sobolev embedding implies 
\begin{equation}
 \e^{14}\lambda(\e)\int\left(\int\re^{-\frac{3}{\e^2}-\frac{6}{8}}\,dx\right)^{\frac{1}{3}}\,dt\leq C.
\end{equation}
Then,
\begin{equation} 
\iint|p_{\e}^{5}(\re)|\leq C \frac{\e^{3}}{\e^{\frac{14(8+\e^2)}{8+2\e^2}}}e^{-\frac{8+\e^2}{\e^2(8+2\e^2)}}\left(\e^{14}\lambda(\e)\int\left(\int\re^{-\frac{3}{\e^2}-\frac{6}{8}}\right)^{\frac{1}{3}}\,dt\right)^{\frac{8+\e^2}{8+2\e^2}}.
\end{equation}
and then $p^{5}_{\e}(\re)$ vanishes as $\e$ goes to $0$. 
Finally, the term $p_{\e}^{6}(\re)$ is treated as the term $p_{\e}^{4}(\re)$. The same proof of the convergence of the term $p_{\e}^{1}(\re)$ and $p_{\e}^{4}(\re)$ show the convergence of the damping coefficient $\tilde{p}_{\e}(\re)$. 
\end{proof}
\begin{lemma}\label{lem:sm}
Let $\{(\rn, \un)\}_\e$ be a sequence of solutions of \eqref{eq:aqns} then up to subsequences there exists a vector $m_1$ such that 
\begin{equation}\label{eq:strong3}
\rn\un\rightarrow m_1\textrm{ strongly in }L^{2}(0,T;L^{p}(\T))\textrm{ with }p\in[1,3/2).
\end{equation}
\end{lemma}
\begin{proof}
To prove \eqref{eq:strong3} we first notice that from the bounds \eqref{eq:uf1} and \eqref{eq:uf3}
\begin{equation}\label{eq:51bis}
\{\nabla(\rn\un)\}_\e\textrm{ is uniformly bounded in }L^{2}(0,T;L^{1}(\T)).
\end{equation}
Then, we need to estimate the time derivative of $\re\ue$. Precisely, we are going to prove that 
\begin{equation}
\int\|\partial_t(\re\ue)\|_{W^{-1,1}}\leq C
\end{equation}
By using the first equation in \eqref{eq:aqns} we get 
\begin{equation}
\begin{aligned}
\partial_{t}(\re\ue)&=-\dive(\re \ue\otimes \ue)-\nabla\re^{\g}-\nabla p_{\e}(\re)-\tilde{p}_{\e}(\re)\ue\\
                            &+2\nu\dive(\he(\re)D\ue)+2\nu\nabla(\gie(\re)\dive \ue)\\
                            &+\kappa^2\dive(\he(\re)\nabla^{2}(\phie(\re))+\nabla(\gie(\re)\Delta\phie(\re))\\
                            &=\sum_{i=1}^{8}I^{\e}_{i}
\end{aligned}
\end{equation}
First of all we notice that 
\begin{equation}\label{eq:mom1}
\begin{aligned}
&|\gie(\re)|\leq C \he(\re)\\
&\he(\re)\textrm{ is uniformly bounded in }L^{1}_{t,x}.
\end{aligned}
\end{equation} 
Then, we estimates each term. From \eqref{eq:uf1} we have that 
\begin{equation}
\{I^{\e}_1\}_\e\textrm{ is uniformly bounded in }L^{1}(0,T;W^{-1,1}(\T)).
\end{equation}                        
By using Lemma \ref{lem:sp} we get that for $i=2,3,4$ 
\begin{equation}
\{I^{\e}_i\}_\e\textrm{ is uniformly bounded in }L^{1}(0,T;W^{-1,1}(\T)).
\end{equation}   
Regarding the stress tensor by using \eqref{eq:mom1} and \eqref{eq:uf4} we have for $i=5, 6$
\begin{equation}
\{I^{\e}_i\}_\e\textrm{ is uniformly bounded in }L^{1}(0,T;W^{-1,1}(\T)).
\end{equation}    
Then, by using the \eqref{eq:uf3} and \eqref{eq:mom1} we have also that for $i=7,8$
\begin{equation}
\{I^{\e}_i\}_\e\textrm{ is uniformly bounded in }L^{1}(0,T;W^{-1,1}(\T)).
\end{equation}
In particular, this implies that $\{\partial_{t}(\re\ue)\}_{\e}$ is uniformly bounded in $L^{1}(0,T;W^{-1,1}(\T))$, then by standard Aubin-Lions Lemma we get \eqref{eq:strong3}.
\end{proof}
\begin{lemma}\label{lem:smain}
Let $(\rn,\un)$ be a sequence of solutions of \eqref{eq:qns} and let $\wn=\un+c\nabla\phie(\re)$. Then, up to subsequences we have that 
\begin{equation}\label{eq:main2}
\rrn\un\rightarrow\sqrt{\rho}u\textrm{ strongly in }L^{2}((0,T)\times\T),
\end{equation}
where $u$ is defined $m/\rho$ on $\{\rho>0\}$ and $0$ on $\{\rho=0\}$. 
\end{lemma}
\begin{proof}
Let us consider the Mallet-Vasseur type estimate \eqref{eq:mvinequality} in Proposition \ref{prop:mvinequality}. By using \eqref{eq:uf1}-\eqref{eq:uf5bis} and by taking $\delta>0$ sufficiently small in \eqref{eq:mvinequality} we may infer
\begin{equation}\label{eq:mvbis}
\sup_{t\in (0,T)}\int\re\left(1+\frac{|\we|^2}{2}\right)\log\left(1+\frac{|\we|^2}{2}\right)\leq C.
\end{equation}
By Lemma \ref{lem:sr} and Lemma \ref{lem:srr} we can extract a further subsequence such that 
\begin{equation}\label{eq:convmom}
\begin{aligned}
&\rrn\rightarrow\rrho\textrm{ a.e. in }(0,T)\times\T,\\
&\nabla\rrn\rightarrow\nabla\rrho\textrm{ a.e. in }(0,T)\times\T,\\
&\he'(\re)\nabla\rrn\rightarrow\nabla\rrho\textrm{ a.e. in }(0,T)\times\T,\\
&m_{1,\e}=\rn\un\rightarrow m_1\textrm{ a.e. in }(0,T)\times\T.
\end{aligned}
\end{equation}
Then, it follows that 
\begin{equation}\label{eq:convmomw}
m_{2,\e}:=m_{1,\e}+2\mu\rrn \he'(\re)\nabla\rrn\rightarrow m_1+2\mu\rrho\nabla\rrho=:m_2,
\end{equation}
a.e. in $(0,T)\times\T$.
Arguing as in \cite{MV} by using \eqref{eq:uf1}-\eqref{eq:uf4} and Fatou Lemma we have that 
\begin{equation}
 \iint\liminf_\e \frac{m_{1,\e}^2}{\rn}\leq \liminf_\e\iint \frac{m_{1,\e}^2}{\rn}< \infty.
\end{equation}
This implies that $m_{1}=0$ a.e. on $\{\rho=0\}$. Let us define the following limit velocity
\begin{equation*}
u=\left\{
\begin{array}{cc}
\displaystyle{\frac{m_1}{\rho}}  & \textrm{ on }\{\rho>0\},  \\
\\
0  & \textrm{ on }\{\rho=0\}.  \\
\end{array}
\right. 
\end{equation*}
In this way we have that $m_1=\rho u$ and $m_1/\sqrt{\rho}\in L^{\infty}(0,T;L^{2}(\T))$. 
Then from \eqref{eq:convmomw} we have that $m_2=m_1+2\mu\rrho\nabla\rrho$ and since $\nabla\rrho$ is finite almost everywhere we also have that $m_2=0$ on the set $\{\rho=0\}$. This in turn implies that after defining the following limit velocity 
\begin{equation*}
w=\left\{
\begin{array}{cc}
\displaystyle{\frac{m_1}{\rho}+2\mu\frac{\rrho\nabla\rrho}{\rho}}  & \textrm{ on }\{\rho\not=0\}  \\
\\
0  & \textrm{ on }\{\rho=0\},  \\
\end{array}
\right. 
\end{equation*}
we have that $m_2=\rho w$ and 
\begin{equation*}
\frac{m_2}{\rrho}=\rrho u+2\mu\nabla\rrho\in L^{\infty}(0,T;L^{2}(\T)).
\end{equation*}
Now we can prove \eqref{eq:main2}. First, by using \eqref{eq:convmom}, \eqref{eq:mvbis} and Fatou Lemma we get that 
\begin{equation}\label{eq:mvfinal}
\sup_t\int\rho|w|^2\log\left(1+\frac{|w|^2}{2}\right)\leq\sup_t\int\rn|\wn|^2\log\left(1+\frac{|\wn|^2}{2}\right)\leq C.
\end{equation}
Then, we note that for any fixed $M>0$
\begin{equation}\label{eq:convtro}
\rrn\wn\chi_{|\wn|\leq M}\rightarrow\rrho w\chi_{|w|\leq M}
\end{equation}
a.e. in $(0, T)\times\Omega$.
Indeed, in $\{\rho\not=0\}$ it holds
\begin{equation}
\rrn\wn=\frac{m_{1,\e}}{\rrn}+\he'(\re)\nabla\rrn\rightarrow\frac{m_1}{\rrho}+\nabla\rrho\textrm{   a.e.}
\end{equation} 
While, in $\{\rho=0\}$ we have 
\begin{equation}
|\rrn\wn\chi_{|\wn|<M}|\leq M\rrn\rightarrow 0\textrm{   a.e.}
\end{equation}
Then, 
\begin{equation*}
\begin{aligned}
\iint|\rrn\un-\rrho u|^2&\leq \iint|\rrn\wn-\rrho w|^2+4\mu^2\iint|\he'(\re)\nabla\rrn-\nabla\rrho|^2\\
                                 &\leq\iint|\rrn\wn\chi_{|\wn|<M}-\rrho w\chi_{|w|<M}|^2\\
                                 &+2\iint|\rrn\wn|^2\chi_{|\wn|>M}+2\iint|\rrho w|^{2}\chi_{|w|>M}\\
                                 &+4\mu^2\iint|\he'(\re)\nabla\rrn-\nabla\rrho|^2\\
                                 &\leq\iint|\rrn\wn\chi_{|\wn|<M}-\rrho w\chi_{|w|<M}|^2\\
                                 &+4\mu^2\iint|\he'(\re)\nabla\rrn-\nabla\rrho|^2\\
                                 &+\frac{C}{\log(1+M)}\iint\rn|\wn|^2\log\left(1+\frac{|\wn|^{2}}{2}\right)\\
                                 &+\frac{C}{\log(1+M)}\iint\rho|w|^2\log\left(1+\frac{|w|^2}{2}\right).
\end{aligned}                                 
\end{equation*}
By keeping $M$ fixed, we see that the first term on the right hand side goes to zero as $\eps\to0$ by the dominated convergence theorem. Furthermore also the second term goes to $0$ as $\eps\to0$ because of \eqref{eq:strong1}. Hence we have
\begin{equation*}
\lim_{\eps\to0}\iint|\sqrt{\rho_\eps}u_\eps-\sqrt{\rho}u|^2\leq\frac{C}{\log(1+M)},
\end{equation*}
for any $M>0$. By letting $M\to\infty$ we then get \eqref{eq:main2}.
\end{proof}
\bigskip
\subsection{Proof of Theorem \ref{teo:main1} and Theorem \ref{teo:main2}}
Let $(\rho^0, u^{0})$ be initial data for \eqref{eq:qns} satisfying \eqref{eq:hyidr} and \eqref{eq:hyidu} and let $(\re^0,\ue^0)$ be the sequence of initial data constructed in Section \ref{sec:app} satisfying \eqref{eq:hyaid}. For any  $\e<\e_f$ by using Theorem \ref{teo:main3} in the two dimensional case and Theorem \ref{teo:main4} in the three dimensional one, there exists a sequence of global smooth solutions $\{(\re,\ue)\}_{\e}$, $\re>0$, of \eqref{eq:aqns}-\eqref{eq:aid} and $(\rho,u)$, with $u$ defined zero on the set $\{\rho=0\}$, such that the convergences stated in Lemma \ref{lem:sr}-\ref{lem:smain} hold. We still denote with $(\re,\ue)$ the subsequence chosen in the convergence lemma.  Let us prove that $(\rho, u)$ is a finite energy weak solution of \eqref{eq:qns}-\eqref{eq:id}. Let us consider the first equation of \eqref{eq:aqns}. 
\begin{equation*}
\partial_t\re+\dive(\re\ue)=0.
\end{equation*}
The convergence to the weak formulation of \eqref{eq:qns} holds because of Lemma \ref{lem:sr} and Lemma \ref{lem:sm}. 
Next, let us consider the momentum equation 
\begin{equation*}
\begin{aligned}
&\partial_t(\re \ue)+\dive(\re \ue\otimes \ue)-2\nu\dive(\re D\ue)+\nabla\re^{\g}-\kappa^2\dive\K_{\e},\\
&=2\nu\dive((\he(\re)-\re)D\ue)+2\nu\nabla(\gie(\re)\dive \ue)\\
&-\nabla p_{\e}(\re)-\tilde{p}_{\e}(\re)\ue.
\end{aligned}
\end{equation*}
Then, 
\begin{equation*}
\iint|\he(\re)-\re||D\ue|\leq C \left(\iint |\he(\re)-\re| \right)^{\frac{1}{2}}\left(\iint|\he(\re)||D\ue|^2+|\re||D\ue|^2\right)^{\frac{1}{2}}
\end{equation*}
and this term converges to zero because of Lemma \ref{lem:shg}, \eqref{eq:uf1} and \eqref{eq:uf2}. Then, 
\begin{equation*}
\begin{aligned}
\iint|\gie(\re)||\dive \ue|&\leq C\left(\iint |\gie(\re)|\right)^{\frac{1}{2}}\left(\iint|\gie(\re)||\dive \ue|^2\right)^{\frac{1}{2}}\\
&\leq C \left(\iint |\gie(\re)|\right)^{\frac{1}{2}}\left(\iint|\he(\re)||\nabla \ue|^2\right)^{\frac{1}{2}}
\end{aligned}
\end{equation*}
and this term converges to zero because of Lemma \ref{lem:shg} and \eqref{eq:uf4}. Note that \eqref{eq:stin1} has been used.
Then, the pressure term $p_{\e}(\re)$ goes to $0$ because of Lemma \ref{lem:sp}. Concerning the damping term we have
\begin{equation*}
\iint|\tilde{p}_{\e}(\re)\ue|\leq \left(\iint |\tilde{p}_{\e}(\re)|\right)^{\frac{1}{2}}\left(\iint|\tilde{p}_{\e}(\re)||\ue|^2\right)^{\frac{1}{2}}\leq C\left(\iint |\tilde{p}_{\e}(\re)|\right)^{\frac{1}{2}}
\end{equation*}
which goes to zero thanks to Lemma \ref{lem:sp}. 
Now, we consider the terms in the left-hand side. The only convergence to prove is the convergence in the dispersive term. Indeed, since the strong convergence in $L^{2}_{t,x}$ of $\rre\ue$ holds the convergence of the other terms is straightforward, see \cite{AS} for more details.
Let us consider the following term where $\psi\in C^{\infty}([0,T)\times\T)$ 
\begin{equation*}
\begin{aligned}
\iint\dive\K_{\e}\cdot\psi&=\iint\nabla(\he'(\re)\Delta \he(\re))\psi-4\iint\dive(\he'(\re)\nabla\rre\otimes \he'(\re)\nabla\rre)\psi\\
                                      &=4\iint \he''(\re)\re \he'(\re)|\nabla\rre|^2\dive\psi\\
                                      &+2\iint \he'(\re)\rre \he'(\re)\nabla\rre\nabla\dive\psi\\
                                      &+4\iint \he'(\re)\nabla\rre\otimes \he'(\re)\nabla\rre\nabla\psi
\end{aligned}                                      
\end{equation*}
and by using Lemma \ref{lem:srr} it easy to conclude that 
\begin{equation*}
\begin{aligned}
\\
&4\iint \he''(\re)\re \he'(\re)|\nabla\rre|^2\dive\psi\rightarrow 0,\\
\\
&2\iint \he'(\re)\rre \he'(\re)\nabla\rre\nabla\dive\psi\rightarrow 2\iint\rrho\nabla\rrho\nabla\dive\psi,\\
\\
&4\iint \he'(\re)\nabla\rre\otimes \he'(\re)\nabla\rre\nabla\psi\rightarrow 4\iint\nabla\rrho\otimes\nabla\rrho:\nabla\psi.\\ 
\end{aligned}
\end{equation*}
\end{proof}
\section{Global Regularity for the approximating system}\label{sec:appex}

In this section we prove the global in time existence of smooth solutions for the approximating system \eqref{eq:aqns}. In the two-dimensional case the following theorem holds:
\begin{theorem}\label{teo:main3}
Let $\nu, \kappa>0$ such that $\kappa<\nu$ and $\g>1$. Then for  $\e<\e_{f}=\e_{f}(\nu, \kappa,\nu, \gamma)$ there exists a global smooth solution of \eqref{eq:aqns}-\eqref{eq:aid}. 
\end{theorem}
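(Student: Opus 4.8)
The strategy is the classical one: prove local existence of a smooth solution, derive a priori bounds controlling the solution on any finite interval, and conclude by a continuation argument. The initial data $(\re^0,\ue^0)$ constructed in Section \ref{sec:app} are smooth with $1/\bar\rho^0\le\re^0\le\bar\rho^0$, so the starting configuration is non-degenerate.

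For the local existence step I would not work with \eqref{eq:aqns} directly but rather with the effective formulation, since the transformation of Lemma \ref{lem:BDtransf} with $c=\mu$ removes both the third-order capillarity term and the cold pressure, leaving the coupled parabolic system \eqref{eq:awqns} for $(\re,\we)$. Its density equation can be written
\[
\pt\re+\dive(\re\we)=\mu\,\dive\!\big(\he'(\re)\nabla\re\big),
\]
which is uniformly parabolic as long as $\re$ stays bounded above and below, since $\he'(\re)>0$; and, with $\re$ regarded as a coefficient, the $\we$-equation is a uniformly parabolic system of Lam\'e type in the same regime, the required ellipticity following from $0<\mu<\nu$ (equivalently $\kappa<\nu$) together with the structural inequalities \eqref{eq:stin1}. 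A standard linearization-plus-contraction argument (or the quasilinear parabolic theory of Amann) then yields a unique smooth solution on a maximal interval $[0,T_\e)$, with the blow-up alternative that either $T_\e=+\infty$ or one of $\|(\re,\we)(t)\|_{H^s}$, $\|\re(t)\|_{L^\infty}$, $\|1/\re(t)\|_{L^\infty}$ diverges as $t\uparrow T_\e$. Reversing the computation of Lemma \ref{lem:BDtransf}, which is licit once $\re>0$, the field $\ue:=\we-\mu\nabla\phie(\re)$ is then a smooth solution of \eqref{eq:aqns}.

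It remains to exclude the blow-up alternative. The decisive point is the pointwise control of the density. The energy identity of Proposition \ref{prop:energy} yields $\sup_t\int f_\e(\re)\le C$, and since $f_\e$ contains the singular powers $\re^{\pm1/\e^2}$ this already rules out concentration; coupling this with a De Giorgi-type iteration on the parabolic density equation gives $0<c(\e)\le\re(t,x)\le C(\e)$ uniformly on $[0,T_\e)$, which is the content of Proposition \ref{prop:DG}. With $\re$ trapped between positive constants, \eqref{eq:awqns} is genuinely uniformly parabolic, and one propagates higher Sobolev regularity in the usual way: differentiating the equations, the highest-order terms are absorbed by the viscous dissipation while the lower-order and nonlinear contributions are handled with the a priori bounds of Propositions \ref{prop:energy}, \ref{prop:bdentropy}, \ref{prop:mvinequality} and, crucially in two dimensions, the Ladyzhenskaya/Gagliardo--Nirenberg inequalities (the pressure $\re^\g$ being harmless since $\re$ is bounded and $\g>1$). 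A Gronwall argument then keeps $\|(\re,\we)(t)\|_{H^s}$ finite on every finite interval, contradicting the blow-up alternative; hence $T_\e=+\infty$, which proves the theorem.

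The main obstacle is the lower bound $\re\ge c(\e)>0$: without it the viscous and capillarity operators degenerate and the whole parabolic structure collapses, so everything rests on the De Giorgi estimate and hence on the design of the cold pressure $p_\e$, whose exponents $\pm1/\e^2$ are tuned so that $f_\e$ controls $\re$ both near vacuum and near infinity while staying positive for $\e<\e_f$; the damping coefficient $\tilde p_\e$ is then calibrated through \eqref{eq:maincost} precisely so that this additional pressure cancels in the effective formulation and does not spoil the estimates of Section \ref{sec:apriori}. A secondary technical nuisance is that the principal coefficients $\he(\re)$ depend on the unknown, so the higher-order energy estimates must be organized so that the commutators remain genuinely lower order; in three dimensions this balance is tighter and forces the extra integrability estimate of Proposition \ref{prop:ldl4}, which is why the three-dimensional statement (Theorem \ref{teo:main4}) requires $\nu$ and $\kappa$ to be comparable, a restriction absent here.
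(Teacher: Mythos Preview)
Your outline matches the paper's strategy: local existence plus continuation, with the decisive step being the two-sided density bound of Proposition~\ref{prop:DG}, after which higher regularity is standard (the paper simply defers this to \cite{LX}, Lemma 2.5, rather than redoing it).

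There is one inaccuracy worth flagging. You attribute the density bound to ``$f_\eps$-control plus De Giorgi'' and relegate Lemma~\ref{prop:ldl4} to a purely three-dimensional role. In fact the two-dimensional half of that lemma, namely $\sup_t\int\re|\we|^{2+2\delta}\le C$, is equally indispensable here. The De Giorgi iteration in Proposition~\ref{prop:DG} is run on $\pt\re-\dive(\he'(\re)\nabla\re)=-\dive(\re\we)$, and to close it one needs the forcing $\re\we$ (and $\we/\re$ for the lower bound) in $L^\infty_tL^p_x$ with $p>d=2$. The energy estimate only yields $\rre\we\in L^\infty_tL^2_x$; interpolating against the $f_\eps$-bound on $\re$ gives $\re\we\in L^\infty_tL^r_x$ with $r=2/(1+\eps^2)<2$, which is not enough. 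It is precisely the extra exponent $2\delta$ from Lemma~\ref{prop:ldl4} that pushes $p$ above $2$. The distinction between Theorems~\ref{teo:main3} and~\ref{teo:main4} is not that Lemma~\ref{prop:ldl4} is unnecessary in $d=2$, but that in $d=2$ its conclusion holds for every $\kappa<\nu$ and $\g>1$, whereas in $d=3$ reaching $p>3$ forces the constraint $\kappa^2<\nu^2<\tfrac{9}{8}\kappa^2$.
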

Concerning the three dimensional case, we have the following result 
\begin{theorem}\label{teo:main4}
Let $\nu, \kappa>0$ such that $\kappa^2<\nu^2<(9/8)\kappa^2$ and $\g\in(1,3)$. Then, for  $\e<\e_{f}=\e_{f}(\nu, \kappa,\nu, \gamma)$ there exists a global smooth solution of \eqref{eq:aqns}-\eqref{eq:aid}.
\end{theorem}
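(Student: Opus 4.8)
The plan is to prove local-in-time existence of a smooth solution of the regularized problem \eqref{eq:aqns}--\eqref{eq:aid} and then to globalize it by a continuation argument based on the bounds of Section \ref{sec:apriori}, which (for $\eps$ fixed) prevent the solution from degenerating. The starting observation is that, as long as $\re$ stays bounded away from $0$ and from $\infty$, the system is a genuinely parabolic quasilinear system: this is clearest in the effective formulation \eqref{eq:awqns}, where the mass equation reads $\pt\re+\dive(\re\we)=\mu\Delta\he(\re)$ with $\mu$ as in \eqref{eq:maincost}, and $\he'(\re)\ge1$ by \eqref{eq:hrho}, so this is a nondegenerate parabolic equation for $\re$; the $\we$-equation is parabolic with principal part $-2(\nu-\mu)\dive(\he(\re)D\we)-(2\nu-\mu)\nabla(\gie(\re)\dive\we)$, which is coercive thanks to \eqref{eq:stin1}--\eqref{eq:stin2} and $0<\mu<\nu$. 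First I would run a fixed point / iteration scheme in $C([0,\tau];H^s(\T))$ for $s$ large, freezing the coefficients $\he(\re),\gie(\re)$ and the drift, using parabolic smoothing together with the maximum principle for the $\re$-equation to keep $\re>0$ and to close a contraction on a short interval; since the initial data satisfy $1/\bar\rho^0\le\re^0\le\bar\rho^0$ and are smooth, this gives a unique smooth solution with $\re>0$ on $[0,\tau]$.

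\textbf{A priori bounds for fixed $\eps$.} To continue the solution one must rule out degeneration on the maximal interval. The energy identity of Proposition \ref{prop:energy} and the BD-type identity of Proposition \ref{prop:bdentropy} (taken with $c=\mu$, which produces \eqref{eq:awqns}) control $\int\re|\ue|^2$, $\int(\re+\re^\g)$, $\int f_\eps(\re)$, $\int|\he'(\re)\nabla\rre|^2$ and the various dissipation integrals, all finite on $[0,T]$. The point of the cold pressure is exactly that $f_\eps(\re)$ contains the terms $\re^{1/\eps^2}$ and $\re^{-1/\eps^2}$, so the energy bound already forces $\int\re^{\pm1/\eps^2}$ to be bounded; I would then upgrade this to pointwise two-sided bounds $c(\eps)\le\re\le C(\eps)$ on $[0,T]$ by a De Giorgi / maximum-principle argument for the parabolic equation $\pt\re+\we\cdot\nabla\re+\re\dive\we=\mu\Delta\he(\re)$, the damping and cold-pressure structure being what absorbs the lower-order terms (this is the auxiliary positivity statement, which I would establish first). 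With $\re$ trapped between two positive constants, the whole system becomes uniformly parabolic on $[0,T]$.

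\textbf{Bootstrap and the main obstacle.} Once $\re$ is bounded above and below, one bootstraps: parabolic regularity for the $\re$-equation gives spatial smoothness of $\re$ in terms of that of $\we$, the uniformly parabolic $\we$-system gives smoothness of $\we$ in terms of that of $\re$ and of the now-smooth lower-order terms, and iterating yields $\re,\we\in C^\infty$. What makes this bootstrap start — and what is responsible for the restriction $\kappa^2<\nu^2<\frac{9}{8}\kappa^2$ when $d=3$ — is the first estimate above energy level: an $L^2_tH^1_x$-type bound for $\nabla\we$, equivalently a bound on $\int\he(\re)|\nabla^2\phie(\re)|^2$ together with $\int|\nabla^2\rre|^2$ and $\int|\nabla\re^{1/4}|^4$. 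Following the renormalization identity of Lemma \ref{lem:renormalized} and testing the $\we$-equation against a suitable power of $\we$, one obtains such an estimate provided the commutator-type terms $\he(\re)(D\we\cdot\we)\cdot(A\we\cdot\we)$ and $\gie(\re)\dive\we\,\we\cdot(D\we\cdot\we)$ are absorbed by the good dissipation; in three dimensions this absorption works only when the coercivity constant $2\nu-\mu$ dominates an antisymmetric contribution of size comparable to $\nu$, which forces $\nu$ and $\kappa$ to be of the same order — the value $\frac{9}{8}$ being simply what this argument gives and not optimal. I expect the proof of this $L^4$-type estimate under $\nu^2<\frac{9}{8}\kappa^2$ to be the main obstacle. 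Granted it, together with the pressure integrability of \eqref{eq:uf5} (which needs precisely $\g\in(1,3)$ when $d=3$) and the Mellet--Vasseur bound of Proposition \ref{prop:mvinequality}, the continuation argument closes and the local solution extends to $[0,T]$ for every finite $T$.
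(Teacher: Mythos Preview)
Your overall plan --- local existence plus a continuation argument resting on two-sided pointwise bounds for $\re$, obtained by a De~Giorgi-type argument for the parabolic equation $\pt\re+\dive(\re\we)=\mu\Delta\he(\re)$ --- matches the paper. But you have misidentified the estimate that forces the restriction $\kappa^2<\nu^2<\frac{9}{8}\kappa^2$, and this is the place where your sketch would not close as written.

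The $L^2_tH^1_x$ control of $\nabla\we$ and the bounds on $\int\he(\re)|\nabla^2\phie(\re)|^2$, $\int|\nabla^2\rre|^2$, $\int|\nabla\re^{1/4}|^4$ all come from the BD-entropy identity (Proposition~\ref{prop:bdentropy} with $c\in(0,\mu)$) together with Lemma~\ref{lem:dispest}; these hold for any $\kappa<\nu$ and are \emph{not} the bottleneck. What the De~Giorgi argument for $\re$ actually needs is that the drift $\re\we$ and $\we/\re$ lie in $L^\infty_tL^p_x$ with $p>d$. Combined with the already-available $L^\infty_tL^q_x$ bounds on $\re^{\pm1}$ (from the cold pressure), this reduces in $d=3$ to proving $\sup_t\int\re|\we|^{3+2\delta}\le C$ for some $\delta>0$. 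It is precisely this higher-moment bound --- obtained from Lemma~\ref{lem:renormalized} with $\beta(t)=t^{1+\delta}$ for $\delta$ close to $1$ --- that requires absorbing the cross-term $\int\he(\re)(D\we\cdot\we)\cdot(A\we\cdot\we)\beta''$ into $\mu\int\he(\re)|A\we\cdot\we|^2\beta''$ plus the $|D\we\cdot\we|^2$ term, and the resulting sign condition reads $\mu-\nu+\frac{\mu}{2\delta}>0$; for $\delta$ near $1$ this is $\frac{3\mu}{2}>\nu$, i.e. $\nu^2<\frac{9}{8}\kappa^2$. So the $L^4$-type estimate you mention is indeed the obstacle, but its role is to feed the De~Giorgi iteration, not to start a gradient bootstrap. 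Finally, the Mellet--Vasseur inequality of Proposition~\ref{prop:mvinequality} plays no role in the fixed-$\eps$ global regularity; it is used only in the limit $\eps\to0$.
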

To prove Theorems \ref{teo:main3} and \ref{teo:main4} we use the standard theory of quasi-linear parabolic equations, see for example \cite{LSU}, in a similar spirit as it is also done in \cite{LX}, see Lemma 2.5 therein.
We only sketch the main ideas for the proofs, which otherwise would be quite long, and we focus only on proving the two main a priori estimates used to extend globally in time the local strong solution, namely Lemma \ref{prop:ldl4} and Proposition \ref{prop:DG}. It is possible to show the existence of a local strong solution to \eqref{eq:aqns}-\eqref{eq:aid} by a fixed point argument by considering the transformed system \eqref{eq:awqns}, expressed in terms of the mass density $\rho_\eps$ and the effective velocity $w_\eps=u_\eps+\mu\nabla\phi_\eps(\rho_\eps)$. For this purpose we write the system \eqref{eq:awqns} in the following way
\begin{equation}\label{eq:par}
\left\{\begin{aligned}
&\d_t\rho_\eps-\mu\Delta h_\eps(\rho_\eps)=F_1\\
&\d_tw_\eps-\nu\frac{h_\eps(\rho_\eps)}{\rho_\eps}\Delta w_\eps-\nu\frac{h_\eps(\rho_\eps)}{\rho_\eps}\nabla\diver w_\eps-(2\nu-\mu)h_\eps'(\rho_\eps)\nabla\diver w_\eps=F_2,
\end{aligned}\right.
\end{equation}
where the right hand side of the system is given by
\begin{equation*}
\left\{\begin{aligned}
F_1=&-\diver(\rho_\eps w_\eps)\\
F_2=&-w_\eps\cdot\nabla w_\eps-\frac1\rho_\eps\nabla p_\eps(\rho_\eps)-\frac1\rho_\eps\tilde p(\rho_\eps)w_\eps+2\mu\nabla h_\eps(\rho_\eps) Dw_\eps+2(\nu-\mu)\nabla h_\eps(\rho_\eps)\cdot Dw_\eps\\
&+(2\nu-\mu)\nabla g_\eps(\rho_\eps)\diver w_\eps.
\end{aligned}\right.
\end{equation*}
It is important to note that the right hand side of \eqref{eq:par} is uniformly parabolic since, by \eqref{eq:hrho} we have $h'_\eps(\rho_\eps), \frac{h_\eps(\rho_\eps)}{\rho_\eps}\geq1$.
Moreover, the terms $F_1$ and $F_2$ in the right-hand side are a semilinear perturbation of the left hand of term with lower order. Hence, a standard fixed point argument gives the existence of a time $T^*=T^*(\e, \rho^{0}_{\e}, w^0_{\e})$ such that, for any $T<T^{*}$ there exists a strong solution $(\rho_\eps, w_\eps)\in C([0, T);W^{2, p}(\mathbb T^3))$ to \eqref{eq:par} with $p>d$ such that  and $c_\eps\leq\rho_\eps(t, x)\leq C_\eps$ for some constants $c_\eps, C_\eps>0$, which depend on $T^*$.\par
The proof of the Theorems \ref{teo:main3} and \ref{teo:main4} then follows by a continuity argument, provided we prove that some a priori estimates hold uniformly with respect to the local existence time $T^*$. More precisely, we need to show \eqref{eq:novac}, namely that the density is uniformly bounded from above and below. We are going to prove \eqref{eq:novac} by exploiting the uniform parabolicity of the first equation in \eqref{eq:par} hence, in order to apply the classical regularity estimates, we will also need that
%
$\re\we, \we/\re\in L^{\infty}_t(L^{p}_x)$ with $p>d$. From the energy estimates we already know that both $\re$ and $1/\re$ are $L^{\infty}_t(L^{q}_x)$ for some very large $q$ depending on $\e$. Consequently it suffices to infer that $\re|\we|^{d+\delta}$ is in $L^{\infty}_{t}(L^{1}_{x})$; this is proved in the Lemma \ref{prop:ldl4} and it is exactly where we need the restriction on $\kappa$ and $\nu$ in three dimensions. This is due to the fact that in the second equation of \eqref{eq:awqns} there is the symmetric part of the gradient of $\we$. 

Once we obtain the uniform bounds \eqref{eq:novac} on the mass density, then an argument similar to Lemma 2.5 in \cite{LX} will yield the higher order estimates $(\rho_\eps, w_\eps)\in C([0, T);W^{2, p}(\mathbb T^3))$, where now $T>0$ is any finite time.
\begin{lemma}\label{prop:ldl4}
Let $(\re,\ue)$ be a smooth solution of \ref{eq:aqns}. Then, $(\re, \we)$ satisfies the following estimates:\\
In the two dimensional case, for any $\g>1$ and $\nu,\kappa>0$ there exists a small $\bar{\delta}=\bar{\delta}(\g, \nu, \kappa)$ and a constant $C$, possibly depending on $\e$,  such that for any $\delta<\bar{\delta}$ 
\begin{equation}\label{eq:ld}
\sup_{t}\int\re|\we|^{2+2\delta}\leq C. 
\end{equation}
In the three dimensional case, for any $\g\in(1,3)$ and $\nu, \kappa>0$ such that $\kappa^2<\nu^2<\frac{9}{8}\kappa^2$ there exists a constant $C$, possibly depending on $\e$, such that 
\begin{equation}\label{eq:l4}
\sup_{t}\int\re|\we|^{3+2\delta}\leq C.
\end{equation} 
\end{lemma}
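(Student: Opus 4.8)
The plan is to test the renormalized identity \eqref{eq:renormalization} of Lemma~\ref{lem:renormalized} with $\beta(s)=(1+s)^{p}$, choosing $p=1+\delta$ in the two dimensional case and $p=\tfrac32+\delta$ in the three dimensional one, so that $\int\re\be$ is comparable to $\|\re\|_{L^1}+\int\re|\we|^{2p}$; since the mass $\int\re=\int\re^0$ is conserved, it suffices to prove $\sup_t\int\re\be\le C(\e)$. One uses repeatedly the scaling relations $\tfrac{|\we|^2}{2}\bedp\le(p-1)\bep$ and $\bep\le C\be^{(p-1)/p}$, the key point being that in dimension two $p-1=\delta$ can be taken arbitrarily small, whereas in dimension three $p-1$ is of order one. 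The left-hand side of \eqref{eq:renormalization} consists only of non-negative good terms: $\tilde p_{\e}(\re)|\we|^2\bep\ge0$, and by \eqref{eq:stin1}--\eqref{eq:stin2} one has $(2\nu-\mu)\big(\he(\re)|D\we|^2+\gie(\re)|\dive\we|^2\big)\ge\tfrac{5}{8}(2\nu-\mu)\he(\re)|D\we|^2\ge0$, together with $\mu\he(\re)|A\we|^2\bep$, $\mu\he(\re)|A\we\cdot\we|^2\bedp$ and $(2\nu-\mu)\he(\re)|D\we\cdot\we|^2\bedp$; the right-hand side contains only the pressure term $-\int\nabla\re^{\g}\we\bep$ and the two cross terms carrying $\bedp$, and the whole proof amounts to absorbing these into the good terms.

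The heart of the matter is the cross term $-2\nu\int\he(\re)(D\we\cdot\we)\cdot(A\we\cdot\we)\bedp$. I would keep it together with the two good $\bedp$ terms and use the pointwise bound $(2\nu-\mu)|D\we\cdot\we|^2+\mu|A\we\cdot\we|^2-2\nu|D\we\cdot\we|\,|A\we\cdot\we|\ge-\tfrac{\nu^2-\kappa^2}{2\nu-\mu}|A\we\cdot\we|^2$, which follows from $\mu(2\nu-\mu)=\kappa^2$ and $(\nu-\mu)^2=\nu^2-\kappa^2$. Since $\kappa<\nu$ there remains a negative residual, which via $|A\we\cdot\we|^2\le|\we|^2|A\we|^2$ and $|\we|^2\bedp\le2(p-1)\bep$ turns into $-\tfrac{2(p-1)(\nu^2-\kappa^2)}{2\nu-\mu}\int\he(\re)|A\we|^2\bep$, absorbed by $\mu\int\he(\re)|A\we|^2\bep$ as soon as $2(p-1)(\nu^2-\kappa^2)<\kappa^2$. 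In dimension two this holds for $\delta$ small with no restriction; in dimension three, $p-1$ being of order one, it forces a quantitative comparison between $\nu^2$ and $\kappa^2$, and after sharing the $\bep$ budget with the two remaining bad terms one obtains a sufficient condition of the form $\nu^2<\alpha\kappa^2$, of which $\tfrac{9}{8}$ is an admissible (non-optimal) value. The second cross term $-(2\nu-\mu)\int\gie(\re)\dive\we\,\we\cdot(D\we\cdot\we)\bedp$ is handled in the same spirit: using $|\gie(\re)|\le(\g-1)\he(\re)$, $|\dive\we|\le\sqrt d\,|D\we|$, a Cauchy--Schwarz split sending part into the good term $(2\nu-\mu)\int\he(\re)|D\we\cdot\we|^2\bedp$ and, via $|\we|^2\bedp\le2(p-1)\bep$, the rest into a small multiple of $\int\he(\re)|D\we|^2\bep$.

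For the pressure term I would integrate by parts,
\begin{equation*}
-\int\nabla\re^{\g}\we\,\bep=\int\re^{\g}\bep\,\dive\we+\int\re^{\g}\bedp\,\big(D\we\cdot\we\big)\cdot\we,
\end{equation*}
and estimate both integrals by Young's inequality, using $|\dive\we|\le\sqrt d\,|D\we|$, $|(D\we\cdot\we)\cdot\we|\le|\we|^2|D\we|$ and $|\we|^2\bedp\le2(p-1)\bep$: a small multiple of $\int\he(\re)|D\we|^2\bep$ goes to the good side, leaving $C(\e)\int\re^{2\g-1}\bep$. Writing $\bep\le C\be^{(p-1)/p}$ and applying H\"older with exponents $p$ and $p/(p-1)$, separating the density factor $\re^{\,2\g-1-(p-1)/p}$ from $(\re\be)^{(p-1)/p}$, this integral is bounded by $C(\e)\big(\int\re\be\big)^{(p-1)/p}$; here the $\e$-dependent constant comes from the bound $\re\in L^\infty_t(L^q_x)$ for $q$ as large as needed, which is a consequence of the cold pressure in the energy estimate \eqref{eq:energy}.

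Collecting everything and absorbing the good terms yields the differential inequality $\frac{d}{dt}\int\re\be\le C(\e)\big(1+(\int\re\be)^{(p-1)/p}\big)$ with $(p-1)/p<1$. Since the approximating initial datum is smooth and $\re^0$ is bounded away from $0$ and $\infty$, the quantity $\int\re^0\beta(|\we^0|^2/2)$ is finite, so a comparison argument gives $\sup_{[0,T]}\int\re\be\le C(\e,T)$, hence \eqref{eq:ld} and \eqref{eq:l4}. The main obstacle is the quadratic-form analysis of the $\bedp$ cross terms in the second paragraph: the good $\bedp$ terms dominate the cross term by themselves only if $\nu\le\kappa$, so in three dimensions — where one cannot make $\delta$, and hence $p-1$, small — one must borrow from the $\bep$ dissipation, and this is affordable only under a quantitative relation between $\nu$ and $\kappa$, which is exactly the hypothesis $\kappa^2<\nu^2<\tfrac{9}{8}\kappa^2$ of Theorem~\ref{teo:main4}.
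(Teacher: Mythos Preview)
Your overall strategy is the paper's: test \eqref{eq:renormalization} with a power function, absorb the two cross terms into the dissipative ones, and close via an ODE inequality using the cold-pressure high integrability of $\re$. The quadratic-form treatment of the first cross term is correct and a little cleaner than the paper's Young-inequality bookkeeping; in two dimensions, where $p-1=\delta$ can be taken arbitrarily small, your argument goes through.

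In three dimensions there is a real gap in your handling of the second cross term
\begin{equation*}
-(2\nu-\mu)\int\gie(\re)\,\dive\we\;\we\cdot(D\we\cdot\we)\,\bedp.
\end{equation*}
You propose to send part of it into $(2\nu-\mu)\int\he(\re)|D\we\cdot\we|^2\bedp$, but that term has already been \emph{fully} spent in your quadratic-form lower bound for the first cross term --- you are double counting. Even if you reserve a fraction $\theta<1$ of it beforehand, the bound $|\gie|\le(\g-1)\he$ you invoke then forces the remaining piece, after $|\we|^2\bedp\le2(p-1)\bep$, to contribute roughly $\tfrac{(\g-1)^2d(p-1)}{2\theta}(2\nu-\mu)\int\he(\re)|D\we|^2\bep$; for $d=3$, $p-1\approx\tfrac12$ and $\g$ near $3$ this is far from a ``small multiple'' of the available $\tfrac58(2\nu-\mu)\int\he(\re)|D\we|^2\bep$, and no choice of $\theta\in(0,1)$ closes the estimate.

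What you are missing, and what the paper exploits, is that $\gie(\re)=-\tfrac18\e\re^{7/8}+(\g-1)\e\re^{\g}$ carries an explicit factor $\e$. Splitting also $\he(\re)=\re+\e\re^{7/8}+\e\re^{\g}$ in the good dissipation and using the \emph{exact} relation $|\we|^2\bedp=2\delta\bep$ (valid for the paper's choice $\beta(t)=t^{1+\delta}$; only an inequality for your $(1+t)^p$), the paper matches the $\e$-pieces of the second cross term against the $\e$-pieces of the good terms. The resulting constraint is $\delta\le\min\{1,1/(\g-1)\}$, and the decisive coefficient multiplying $\int\he(\re)|A\we\cdot\we|^2\bedp$ becomes $\mu-\nu+\tfrac{\mu}{2\delta}$, whose positivity for $\delta$ near $1$ is exactly $\nu^2<\tfrac98\kappa^2$. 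Once you use the $\e$-smallness of $\gie$ rather than \eqref{eq:stin1}, your scheme closes; the final step (your sublinear comparison versus the paper's linear Gronwall) is then a matter of taste.
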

\begin{proof}
For convenience of the reader we write again the integral equality of Lemma \ref{lem:renormalized}
\begin{equation*}
\begin{aligned}
&\frac{d}{dt}\int\re\be+\mu\int \he(\re)|A\we\cdot\we|^2\bedp+\mu\int \he(\re)|A \we|^2\bep\\
&+(2\nu-\mu)\int \he(\re)|D \we|^2\bep+(2\nu-\mu)\int \gie(\re)|\dive \we|^2\bep\\
&+\int\tilde{p}_{\e}(\re)|\we|^2\bep+(2\nu-\mu)\int \he(\re)|D \we\cdot \we|^2\bedp\\
&=-\int\nabla\re^{\g}\we\bep-2\nu\int \he(\re)(D\we\cdot\we)\cdot(A\we\cdot\we)\bedp\\
&-(2\nu-\mu)\int \gie(\re)\dive \we\we\cdot(D\we\cdot\we)\bedp.
\end{aligned}
\end{equation*}
Let $\beta(t)=t^{1+\delta}$ and with $\delta>0$. Then we have that $\beta'(t)=\frac{\beta''(t)t}{\delta}$. By integrating by parts the pressure term and using Young inequality we get 
\begin{equation*}
\begin{aligned}
&\frac{d}{dt}\int\rho\be+\mu\int \he(\re)|A\we\cdot\we|^2\bedp\\
&+(2\nu-\mu)\int \he(\re)|D\we\cdot\we|^2\bedp+(2\nu-\mu)\int \he(\re)|D \we|^2\bep\\
&+(2\nu-\mu)\int \gie(\re)|\dive \we|^2\bep+\mu\int \he(\re)|A \we|^2\bep\\
&+\int\tilde{p}_{\e}(\re)|\we|^2\bep\leq\frac{1}{2\delta}\int\re^{\g}\dive\we|\we|^2\bedp\\
&+\int\re^{\g}\we(D\we\cdot\we)\bedp+\nu\int \he(\re)|D\we\cdot\we|^2\bedp\\
&+\nu\int \he(\re)|A\we\cdot\we|^2\bedp+\frac{(2\nu-\mu)(\g-1)}{2}\e\int\re^{\g}|\dive\we|^2|\we|^2\bedp\\
&+\frac{(2\nu-\mu)(\g-1)}{2}\e\int\re^{\g}|D\we\cdot\we|^2\bedp+\frac{3(2\nu-\mu)}{16}\e\int\re^{\frac{7}{8}}|D\we|^2|\we|^2\bedp\\
&+\frac{2\nu-\mu}{16}\e\int\re^{\frac{7}{8}}|D\we\cdot\we|^2\bedp.
\end{aligned}
\end{equation*}
Then, by writing everything in term of $\bedp$, using \eqref{eq:stin1} and the fact that $|A\we\cdot\we|^2\leq |A\we|^2|\we|^2$ we have
\bigskip
\begin{equation*}
\begin{aligned}
&\frac{d}{dt}\int\re\be+\left(\mu+\frac{\mu}{2\delta}\right)\int \he(\re)|A\we\cdot\we|^2\bedp\\
&+(2\nu-\mu)\int \he(\re)|D\we\cdot\we|^2\bedp+\frac{2\nu-\mu}{2\delta}\e\int\re|D\we|^2|\we|^2\bedp\\
&+\left(\frac{2\nu-\mu}{2\delta}\right)\e\int\re^{\g}|D\we|^2|\we|^2\bedp+\frac{5(2\nu-\mu)}{16\delta}\e\int\re^{\frac{7}{8}}|D\we|^2|\we|^2\bedp\\
&+\frac{(2\nu-\mu)(\g-1)}{2\delta}\e\int\re^{\g}|\dive\we|^2|\we|^2\bedp\\
&\leq \frac{C(\tau)}{2\delta}\int\re^{2\g-1}|\we|^2\bedp+\frac{3\tau}{2\delta}\int\re|D\we|^2|\we|^2\bedp\\
&+C(\alpha)\int\re^{2\g-1}|\we|^2\bedp+\alpha\int\re|D\we\cdot\we|^2\bedp\\
&+\nu\int \he(\re)|A\we\cdot\we|^2\bedp+\left(\nu+\frac{(2\nu-\mu)}{16}\right)\e\int\re^{\frac{7}{8}}|D\we\cdot\we|^2\bedp\\
&+\left(\nu+\frac{(2\nu-\mu)(\g-1)}{2}\right)\e\int\re^{\g}|D\we\cdot\we|^2\bedp+\nu\int \re|D\we\cdot\we|^2\bedp\\
&+\frac{(2\nu-\mu)(\g-1)}{2}\e\int\re^{\g}|\dive\we|^2|\we|^2\bedp+\frac{3(2\nu-\mu)}{16}\e\int\re^{\frac{7}{8}}|D\we|^2|\we|^2\bedp.
\end{aligned}
\end{equation*}
Finally, by absorbing terms from the right hand-side to the left hand-side we get 
\begin{equation}\label{eq:l4ld1}
\begin{aligned}
&\frac{d}{dt}\int\re\be+\left(\mu-\nu+\frac{\mu}{2\delta}\right)\int \he(\re)|A\we\cdot\we|^2\bedp\\
&+(\nu-\mu)\int\re|D\we\cdot\we|^2\bedp\\
&+\left(\nu-\mu-\frac{(2\nu-\mu)(\g-1)}{2}\right)\e\int\re^{\g}|D\we\cdot\we|^2\bedp\\
&+\left(\frac{7\nu}{8}-\frac{15\mu}{16}\right)\e\int\re^{\frac{7}{8}}|D\we\cdot\we|^2\bedp\\
&+\frac{2\nu-\mu}{2\delta}\int\re|D\we|^2|\we|^2\bedp\\
&+\left(\frac{2\nu-\mu}{2\delta}\right)\e\int\re^{\g}|D\we|^2|\we|^2\bedp\\
&+\left(\frac{5(2\nu-\mu)}{16\delta}-\frac{3(2\nu-\mu)}{16}\right)\e\int\re^{\frac{7}{8}}|D\we|^2|\we|^2\bedp\\
&+\frac{(2\nu-\mu)(\g-1)}{2}\left(\frac{1}{\delta}-1\right)\e\int\re^{\g}|\dive\we|^2|\we|^2\bedp\\
&\leq \frac{C(\tau)}{2\delta}\int\re^{2\g-1}|\we|^2\bedp\\
&+\frac{3\tau}{2\delta}\int\re|D\we|^2|\we|^2\bedp\\
&+C(\alpha)\int\re^{2\g-1}|\we|^2\bedp\\
&+\alpha\int\re|D\we\cdot\we|^2\bedp\\
\end{aligned}
\end{equation}
By considering $\delta\in(0,1]$ and choosing $\tau=(2\nu-\mu)/6$ and $\alpha=\nu-\mu$, after using Young inequality we get 
\begin{equation}\label{eq:delta}
\begin{aligned}
\frac{d}{dt}\int\re|\we|^{2+2\delta}&+\left(\mu-\nu+\frac{\mu}{2\delta}\right)\int \he(\re)|A\we\cdot\we|^2\bedp\\
&+\left(\nu-\mu+\frac{2\nu-\mu}{2}\left(\frac{1}{\delta}-(\g-1)\right)\right)\e\int\re^{\g}|D\we\cdot\we|^2\bedp\\
&+\left(\frac{18\nu-17\mu}{16}\right)\e\int\re^{\frac{7}{8}}|D\we\cdot\we|^2\bedp\\
&\leq C(\nu,\kappa,\delta)\left(\int\re^{\tilde{\g}}+\int\re|\we|^{2+2\delta}\right)
\end{aligned}
\end{equation}
with 
\begin{equation*}
\tilde{\g}:=\left(2\g-1-\frac{\delta}{1+\delta}\right)(1+\delta)>0\textrm{ for any }\delta\in(0,1).
\end{equation*}
Now we consider the two dimensional case. Given $\nu,\kappa>0$ with $\nu>\kappa$ and $\g>1$ it is easy to find $\delta$ small enough 
\begin{equation*}
\begin{aligned}
&\mu-\nu+\frac{\mu}{2\delta}>0\quad&\frac{1}{\delta}-(\g-1)>0.
\end{aligned}
\end{equation*}
By Proposition \ref{prop:energy} we have that 
\begin{equation*}
\sup_{t}\int\re^{\frac{1}{\e^2}}\leq C(\e).
\end{equation*} 
Then, by choosing if needed $\e_f$ small enough such that for any $\e<\e_f$ it holds $\tilde{\g}<1/\e^2$ we arrive at
\begin{equation*}
\frac{d}{dt}\int\re|\we|^{2+2\delta}\leq C(\e,\kappa,\mu,\nu)+\int\re|\we|^{2+2\delta}
\end{equation*}
and we get \eqref{eq:ld} by Gronwall Lemma.
Then, we consider the three dimensional case. Since it seems not possible to avoid a restriction on $\nu, \kappa$ we do not aim to optimality, which can be obtained by optimizing the the Young inequalities and minimizing in $\delta$. Going back to \eqref{eq:delta} we argue as follows. First we note that for any $\delta\in(0,1)$ it holds 
\begin{equation}\label{eq:d1}
\mu-\nu+\frac{\mu}{2\delta}>\frac{3\mu}{2}-\nu
\end{equation}
and the left-hand side of \eqref{eq:d1} is positive if the following restriction on $\nu$ and $\kappa$ holds:
\begin{equation}\label{eq:restriction}
\kappa^2<\nu^2<\frac{9}{8}\kappa^2
\end{equation}
Then, by using that $\g\in(1,3)$  we can choose $\delta=\min\{1/(\g-1),1\}$. Notice that $\delta\in(1/2,1)$ and 
\begin{equation*}
\nu-\mu+\frac{2\nu-\mu}{2}\left(\frac{1}{\delta}-(\g-1)\right)>0
\end{equation*}
Then, since $\delta=\frac{1}{2}+\delta'$, with an abuse of notation avoiding the prime, we have
\begin{equation*}
\frac{d}{dt}\int\re|\we|^{3+2\delta}\leq C(\nu,\kappa,\delta)\left(\int\re^{\tilde{\g}}+\int\re|\we|^{3+2\delta}\right)
\end{equation*}
Then, by choosing if needed $\e_f$ small enough such that for any $\e<\e_f$ it holds $\tilde{\g}<1/\e^2$ we get \eqref{eq:l4} by using Gronwall Lemma. We stress that $\e_{f}$ depends only on $\g,\,\nu$ and $\kappa$.
\end{proof}
Now, we are in position to give the proof of Proposition \ref{prop:DG}.
\begin{proposition}\label{prop:DG}
Let $(\re,\ue)$ be a smooth solution of the system \eqref{eq:aqns}. then, there exists a constant $C>0$ possibly dependent on $\e$ such that 
\begin{equation}\label{eq:novac}
\frac{1}{C}\leq \re\leq C
\end{equation}
\end{proposition}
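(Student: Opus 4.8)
The plan is to exploit the parabolic structure of the first equation of the effective system \eqref{eq:awqns}. Writing $\Delta\he(\re)=\dive(\he'(\re)\nabla\re)$, that equation reads
\begin{equation*}
\d_t\re-\mu\,\dive\big(\he'(\re)\nabla\re\big)+\dive(\re\we)=0,
\end{equation*}
which, for each fixed $\e>0$, is a uniformly parabolic equation for $\re$ since $\he'(\re)\ge1$ by \eqref{eq:hrho}, and the initial datum $\re^0$ is bounded above and below by \eqref{eq:hyaid}. I would then run a De Giorgi iteration on the super-level sets $\{\re>K\}$, which yields the upper bound $\re\le C$ and only requires the field $\re\we$ to be controlled, together with an analogous iteration on the sub-level sets $\{\re<K\}$, which yields the lower bound $\re\ge1/C$: on $\{\re<K\}$ one has $|\we|\le K|\we/\re|$, so this second iteration only needs the field $\we/\re$ to be controlled. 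Hence everything reduces to proving that $\re\we$ and $\we/\re$ belong to $L^\infty(0,T;L^p(\T))$ for some $p>d$.

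To obtain these bounds I would combine two ingredients. First, the energy estimate already provides very high, albeit $\e$-dependent, integrability of the density: by Proposition \ref{prop:energy} the term $f_\e(\re)$ is uniformly bounded in $L^\infty(0,T;L^1(\T))$, and by the explicit expression \eqref{eq:deff} the functions $\re^{1/\e^2}$ and $\re^{-1/\e^2}$ both appear in $f_\e$ with positive coefficients; therefore $\sup_{t}\int_{\T}(\re^{1/\e^2}+\re^{-1/\e^2})\,dx\le C(\e)$, so that $\re$ and $1/\re$ lie in $L^\infty(0,T;L^q(\T))$ for every $q\le1/\e^2$. Second, Lemma \ref{prop:ldl4} gives $\re|\we|^{d+2\delta}\in L^\infty(0,T;L^1(\T))$ for some $\delta=\delta(\g,\nu,\kappa)>0$ — and it is precisely here that the restriction $\kappa^2<\nu^2<\frac{9}{8}\kappa^2$ enters when $d=3$.

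With these at hand, for any $p\in(d,d+2\delta)$ I would use the identities
\begin{equation*}
|\re\we|^{p}=\big(\re|\we|^{d+2\delta}\big)^{\frac{p}{d+2\delta}}\,\re^{\frac{p(d+2\delta-1)}{d+2\delta}},\qquad
\Big|\frac{\we}{\re}\Big|^{p}=\big(\re|\we|^{d+2\delta}\big)^{\frac{p}{d+2\delta}}\,\re^{-\frac{p(d+2\delta+1)}{d+2\delta}},
\end{equation*}
and apply H\"older's inequality together with the two ingredients above; since the power of $\re$ (respectively of $1/\re$) that one needs to control on the right-hand side depends only on $d$, $\g$, $\nu$ and $\kappa$, it suffices to take $\e_f$ small enough that this power does not exceed $1/\e^2$. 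This gives $\re\we,\we/\re\in L^\infty(0,T;L^p(\T))$ with $p>d$ for all $\e<\e_f$, and the parabolic argument above then yields \eqref{eq:novac}, with $C$ depending on $\e$ (as well as on $T$, $\g$, $\nu$, $\kappa$ and the data).

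The genuine obstacle in this scheme is not the deduction just sketched but Lemma \ref{prop:ldl4} itself: extracting the bound on $\int\re|\we|^{d+2\delta}$ from the renormalized identity of Lemma \ref{lem:renormalized} requires absorbing the cross terms coupling $D\we$ and $A\we$ into the coercive terms, and in three dimensions this can be done only when $\nu$ and $\kappa$ are comparable — which is the source of the extra hypothesis of Theorem \ref{teo:main2}.
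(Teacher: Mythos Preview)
Your proposal is correct and follows essentially the same route as the paper: reduce to $\re\we,\,\we/\re\in L^\infty(0,T;L^p(\T))$ for some $p>d$ by combining Lemma~\ref{prop:ldl4} with the high ($\e$-dependent) integrability of $\re^{\pm1}$ coming from $f_\e(\re)\in L^\infty_tL^1_x$, and then run a De Giorgi iteration using $\he'(\re)\ge1$. The only cosmetic difference is that for the lower bound the paper derives the equation for $q_\e:=1/\re$ and iterates on its super-level sets rather than working directly on sub-level sets of $\re$; this is the natural implementation of your ``analogous iteration'' and your observation that on $\{\re<K\}$ the relevant drift is controlled by $\we/\re$ is exactly what makes it go through.
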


\begin{proof}
 First we want to prove that 
\begin{equation}\label{eq:dg1}
\re\we\textrm{ and }\frac{\we}{\re} \in L^{\infty}(0,T;L^{p}(\T)\textrm{ with }p>d
\end{equation} 
When $d=2$, by H\"older and Young inequality we get 
\begin{equation*}
\int|\re\we|^{2+\delta}\leq \int \re^{\tilde{\delta}}+\int\re|\we|^{2+2\delta}
\end{equation*}
with $\tilde{\delta}=\tilde{\delta}(\g,\nu,\kappa)>0$. Then by choosing if needed $\e$ small enough such that $\tilde{\delta}<1/\e^2$ we get the desired estimate. 
Then, 
\begin{equation*}
\left|\frac{|\we|}{\re}\right|^{2+\delta}=\left|\frac{1}{\re}\right|^{1+\frac{2+\delta}{2+2\delta}}|\re|^{\frac{2+\delta}{2+2\delta}}|\we|^{2+\delta}.
\end{equation*}
Then, by H\"older and Young inequality 
\begin{equation*}
\int\left|\frac{|\we|}{\re}\right|^{2+\delta}\leq \int\left|\frac{1}{\re}\right|^{\tilde{\delta}}+\int\re|\we|^{2+\delta},
\end{equation*}
with 
\begin{equation*}
\tilde{\delta}=\left(1+\frac{2+\delta}{2+2\delta}\right)\left(\frac{2+2\delta}{2+\delta}\right)^*.
\end{equation*}
Then, again by choosing if necessary $\e$ small enough such that $\tilde{\delta}\leq 1/\e^2$ we get the desired estimate. Now we are in position to prove \eqref{eq:novac}. The proof is standard and it is based on De Giorgi type estimate. We use the same approach as in \cite{LX}, Lemma 2.4. 
Let us start by proving that $\re$ is bounded. Let $m_\e=\re\we$. Then the first equation in \eqref{eq:awqns} is the following 
\begin{equation}
\label{eq:heatre}
\partial_t\re-\dive(\he'(\re)\nabla\re)=\dive m_{\e}.
\end{equation}
Let $k>\|\re^0\|_{\infty}$ and $A_{k}(t)=\{\re>k\}$. Then by using H\'older inequality and the fact that $\he'(\re)\ge 1$ we get 
\begin{equation}\label{eq:deg1}
\begin{aligned}
\frac{d}{dt}\int|(\re-k)_{+}|^2+\frac{1}{2}\int\he'(\re)|\nabla(\re-k)_{+}|^2&\leq\left(\int_{A_k(t)}|m_{\e}|^2\right)\\
&\left(\int|m_{\e}|^p\right)^{\frac{2}{p}}|A_{k}(t)|^{1-\frac{2}{p}}.
\end{aligned}
\end{equation}
By using \eqref{eq:dg1} and denoting $r_k=\sup_{t\in(0,T)}|A_k(t)|$ we get 
\begin{equation}\label{eq:deg2}
\frac{d}{dt}\int|(\re-k)_{+}|^2+\frac{1}{2}\int\he'(\re)|\nabla(\re-k)_{+}|^2\leq C\,r_{k}^{1-\frac{2}{p}}.
\end{equation}
Let $\sigma\in(0,T)$ such that 
\begin{equation*}
\int\int|(\re-k)_{+}(\sigma)|^2:=\sup_{t\in(0,T)}\int|(\re-k)_{+}(t)|^2.
\end{equation*}
Then, 
\begin{equation*}
\int|(\re-k)_{+}(\sigma)|^2+\int|\nabla(\re-k)_{+}(\sigma)|^2\leq r_{k}^{1-\frac{2}{p}},
\end{equation*}
where the fact that $\he'(\re)>1$ has been used. 
Let $l>k>\|\re^0\|_{\infty}$ and $q\geq 6$ to be chosen later. Then it holds that 
\begin{equation}\label{eq:main}
\begin{aligned}
|A_{l}(t)|(l-k)^2&\leq\|(\re-k)_{+}(t)\|_{2}^2\\
                                     &\leq\|(\re-k)_{+}(\sigma)\|_{2}^2\\
                                     &\leq\|(\re-k)_{+}(\sigma)\|_{q}^{2}|A_{k}(\sigma)|^{1-\frac{2}{q}}\\
                                     &\leq\|\nabla(\re-k)_{+}(\sigma)\|_{2}^2|A_{k}(\sigma)|^{1-\frac{2}{q}}\\
                                     &\leq Cr_{k}^{2-\frac{2}{p}-\frac{2}{q}}.
\end{aligned}
\end{equation}                                     
If we show that 
\begin{equation}\label{eq:dgmain}
r_{l}\leq(l-k)^2r_{k}^{1+\alpha}\textrm{ for some }\alpha>0
\end{equation}
by De Giorgi Lemma, see \cite[Lemma 4.1.1]{WYW}, we get that $\re$ is bounded. Then, in the three dimensional case by Sobolev embedding we are forced to take $q=6$ in \eqref{eq:main}. Then since $p>3$ we get \eqref{eq:dgmain} with 
\begin{equation*}
\alpha=\frac{2}{3}-\frac{2}{p} >0.            
\end{equation*}
Note that since $p$ is depending only on $\nu,\kappa$ and $\gamma$ then $\alpha$ has the dependence as well. In the two dimensional case by Sobolev embedding we can take any $q<\infty$. Then, given $p=p(\nu,\kappa,\gamma)>2$, it always possible to find $q$ big enough such that 
\begin{equation*}
\alpha=1-\frac{2}{p}-\frac{2}{q}>0.
\end{equation*}
Now we prove that $\re$ is bounded away from $0$. By using \eqref{eq:heatre} it follows that the equation for $\rei:=1/\re$ is the following. 
\begin{equation}
\partial_{t}\rei-\dive(\he'(\re)\nabla\rei)+2\frac{\he'(\re)|\nabla\re|^2}{\re^3}=-\dive\we\rei+\we\cdot\nabla\rei.
\end{equation}
Let $k>\|1/\re^0\|$, $m_{\e}:=\rei\we$ and $A_{k}(t):=\{\rei>k\}$. Then we get 
\begin{equation}\label{eq:dgir1}
\begin{aligned}
\frac{d}{dt}\int|(\rei-k)_{+}|^2&+\int\he'(\re)|\nabla(\rei-k)_{+}|^2+\int2\frac{\he'(\re)|\nabla\re|^2}{\re^3}\rei\\
                                             &=-\int\rei\dive\we(\rei-k)_{+}+\int\we\cdot\nabla\rei(\rei-k)_{+}\\
                                             &=2\int_{A_k(t)}\we\nabla\rei(\rei-k)_{+}+\int_{A_{k}(t)}\rei\we\cdot\nabla(\rei-k)_{+}\\
                                             &\leq2\int|\nabla(\rei-k)_{+}||\we||\rei|,
\end{aligned}
\end{equation}
where it has been used that $|(\rei-k)_{+}|\leq|\rei|$. By using H\"older inequality, Young inequality and the fact that $\he'(\re)\geq1$ we have that 
\begin{equation*}
\begin{aligned}
\frac{d}{dt}\int|(\rei-k)_{+}|^2+\frac{1}{2}\int\he'(\re)|\nabla(\rei-k)_{+}|^2&\leq\left(\int_{A_k(t)}|m_{\e}|^2\right)\\
&\left(\int|m_{\e}|^p\right)^{\frac{2}{p}}|A_{k}(t)|^{1-\frac{2}{p}},
\end{aligned}
\end{equation*}
with $p>d$ and the last term in the right-hand side of \eqref{eq:dgir1} has been dropped because it is positive. Then, by using \eqref{eq:dg1} and defining as before $r_{k}=\sup_{t\in(0,T)}|A_{k}(t)|$ we get 
\begin{equation*}
\frac{d}{dt}\int|(\rei-k)_{+}|^2+\frac{1}{2}\int\he'(\re)|\nabla(\rei-k)_{+}|^2\leq C\,r_{k}^{1-\frac{2}{p}}.
\end{equation*}
Arguing as in the proof of boundedness of $\re$ we can conclude that $\rei$ is bounded and then $\re$ is bounded away from $0$.
\end{proof}

\section*{Conflict of Interest}
The authors declare that they have no conflict of interest.


\begin{thebibliography}{10}


\bibitem{AM} P. Antonelli and P. Marcati. {\em On the finite energy weak solutions to a system in quantum fluid dynamics.} Commun. Math. Phys., 287, (2009), 657-686.

\bibitem{AM2} P. Antonelli and P. Marcati, \emph{The Quantum Hydrodynamics system in two space dimensions}, Arch. Rat. Mech. Anal., 203, (2012), 499--527.

\bibitem{AS} P. Antonelli and S. Spirito, \emph{On the compactness of finite energy weak solutions to the Quantum Navier-Stokes equations}, J. Hyperbolic Differ. Equ., \textbf{15} (2018), 133--147.

\bibitem{AH} C. Audiard, B. Haspot, \emph{Global well-posedness of the Euler-Korteweg system for small irrotational data}, Comm. Math. Phys. {\bf 351}, no. 1 (2017), 201--247.

\bibitem{BG} S. Benzoni-Gavage, \emph{Propagating phase boundaries and capillary fluids}, available online at 
\url{http://math.univ-lyon1.fr/~benzoni/Levico.pdf}

\bibitem{BGDD} S. Benzoni-Gavage, R. Danchin, S. Descombes, \emph{On the well-posedness for the Euler-Korteweg model in several space dimensions}, Indiana Univ. Math. J., 56, (2007), 1499--1579.

\bibitem{BCNV} D. Bresch, F. Couderc, P. Noble, J.P. Vila, \emph{A generalization of the quantum Bohm identity: Hyperbolic CFL condition for the Euler-Korteweg equations, G\'en\'eralisation de l'identit\'e de Bohm quantique : condition CFL hyperbolique pour \'equations d'Euler–Korteweg.}, Comptes Rendus Math. {\bf 354}, no. 1 (2016), 39--43.

\bibitem{BD} D. Bresch, B. Desjardins, {\em Sur un mod\`ele de Saint-Venant visqueux et sa limite quasi-g\'eostrophique. [On viscous shallow-water equations (Saint-Venant model) and the quasi-geostrophic limit.]}, C. R. Math. Acad. Sci. Paris,  335 (12), 2002, 1079--1084.

\bibitem{BDcras} D. Bresch, B. Desjardins, \emph{Some diffusive capillary models of Korteweg type}, C. R. Math. Acad. Sci. Paris, 332 (2004), 881--886.

\bibitem{BDL} D. Bresch, B. Desjardins, Chi-Kun Lin, {\em On some compressible fluid models: Korteweg, lubrication,
and shallow water systems}, Comm. Partial Differential Equations 28 (2003), no. 3-4, 843--868.

\bibitem{BDZ} D. Bresch, B. Desjardins, E. Zatorska, \emph{Two-velocity hydrodynamics in fluid mechanics: Part II. Existence of global $\kappa-$entropy solutions to the compressible Navier-Stokes systems with degenerate viscosities}, J. Math. Pures Appl., 104, no. 4 (2015), 801--836.

\bibitem{BGZ} D. Bresch, V. Giovangigli, E. Zatorska, \emph{Two-velocity hydrodynamics in fluid mechanics: Part I. Well posedness for zero Mach number systems}, J. Math Pures Appl., 104, no. 4 (2015), 762--800.

\bibitem{BJ} D. Bresch, P.-E. Jabin, \emph{Global existence of weak solutions for compressible Navier-Stokes equations; thermodinamically unstable pressure and anisotropic viscous stress tensor}, arXiv:1507.04629v1, (2015).

\bibitem{BM} S. Brull and F. M\'ehats, {\em Derivation of viscous correction terms for the isothermal quantum Euler model}, 
ZAMM, (2010), 90 (3), 219--230. 

\bibitem{Car} E.A. Carlen, \emph{Conservative diffusions}, Comm. Math. Phys., 94, (1984), 293--315.

\bibitem{CCDZ} Z. Chen, X. Chai, B. Dong, H. Zhao, \emph{Global classical solutions to the one-dimensional compressible fluid models of Korteweg type with large initial data}, J. Diff. Eqns., 259, no. 8 (2015), 4376--4411.


\bibitem{DS} J.E. Dunn, J. Serrin, \emph{On the thermomechanics of interstitial working}, Arch. Rat. Mech. Anal., 88, (1985), no. 2, 95--133.


\bibitem{F} E. Feireisl,{\em On compactness of solutions to the compressible isentropic Navier-Stokes equations when the density is not square integrable}, Comment. Math. Univ. Carolinae, 42, (2001), 83--98.


\bibitem{GLF} P. Germain, P. LeFloch, \emph{The finite energy method for compressible fluids: The Navier-Stokes-Korteweg model}, Comm. Pure Appl. Math., 69, no.1 (2016), 3--61.

\bibitem{GLV} M. Gisclon and I. Lacroix-Violet, {\em About the barotropic compressible quantum Navier-Stokes}
Nonlinear Analysis, 128, (2015),106--121

\bibitem{H} B. Haspot, \emph{Global strong solution for the Korteweg system in dimension $N\geq2$}, to appear Math. Ann.

\bibitem{Ji} F. Jiang, \emph{A remark on weak solutions to the barotropic compressible quantum Navier-Stokes equations}, Nonlin. Anal. RWA {\bf 12}, no. 3 (2011), 1733--1735.

\bibitem{J} A. J\"ungel, {\em Global weak solutions to compressible Navier-Stokes equations for quantum fluids}, SIAM
J. Math. Anal. 42 (2010), no. 3, 1025--1045.

\bibitem{J2} A. J\"ungel, {\em Dissipative quantum fluid models}. Rivista Mat. Univ. Parma 3 (2012), 217--290.

\bibitem{J3} A. J\"ungel, {\em Effective velocity in Navier-Stokes equations with third-order derivatives}. Nonlin. Anal. 74 (2011), 2813-2818.

\bibitem{JM} A. J\"ungel and D. Matthes, {\em The Derrida-Lebowitz-Speer-Spohn equation: existence, non-uniqueness,
and decay rates of the solutions}, SIAM J. Math. Anal. 39 (2008), 1996--2015.

\bibitem{JMi} A. J\"ungel and J.P. Milisic, {\em Quantum Navier-Stokes equations}, in: M. G\"unther, A. Bartel, M. Brunk, S. Sch\"ops, M. Striebel (Eds.), Progress in Industrial Mathematics at ECMI 2010, Springer, Berlin, (2012), pp. 427--439.


\bibitem{LSU} O. Ladyzhenskaya, V. A. Solonnikov, N. N. Uraltseva, \emph{Linear and quasilinear equations of parabolic type}, Translated from the Russian by S. Smith. Translations of Mathematical Monographs, Vol. 23. American Mathematical Society, Providence, R.I., 1968.

\bibitem{LL} L. Landau, E. Lifschitz, \emph{Quantum mechanics: non-relativistic theory}, Pergamon Press, New York, 1977.

\bibitem{LLV} I. Lacroix-Violet, A. Vasseur, \emph{Global weak solutions to the compressible quantum navier-stokes equation and its semi-classical limit}, arxiv.org:1607.06646, (2016).

\bibitem{LX} J. Li and Z. Xin, {\em Global Existence of Weak Solutions to the Barotropic Compressible Navier-Stokes Flows with Degenerate Viscosities}, arXiv:1504.06826v2, (2015). 

\bibitem{L} P.L. Lions, {\em Mathematical Topics in Fluid Mechanics. Vol. 2.}, Clarendon Press, Oxford Science Publications, 1996.

\bibitem{MV} A. Mellet, A. Vasseur, {\em On the barotropic compressible Navier-Stokes equations}, Comm. Partial Dif-
ferential Equations, 32, (2007), no. 1-3, 431-452.

\bibitem{VY1} A. Vasseur and C. Yu, {\em Existence of global weak solutions for 3D degenerate compressible Navier-Stokes equations}, arXiv:1501.06803v3, (2015).

\bibitem{VY2} A. Vasseur and C. Yu, {\em Global weak solutions to compressible quantum Navier-Stokes equations with damping}, arXiv:1503.06894, (2015).

\bibitem{WYW} Z. Wu, J. Yin and C. Wang, {\em Elliptic and parabolic equations.} World Scientific Publishing Co. Pte. Ltd., Hackensack, NJ, 2006.

\end{thebibliography}
\end{document}